\newtheorem{thm}{Theorem}[section]
\newtheorem{cor}[thm]{Corollary}
\newtheorem{prop}[thm]{Proposition}
\newtheorem{lem}[thm]{Lemma}
\theoremstyle{definition}
\newtheorem{defn}[thm]{Definition}
\newtheorem{exmp}[thm]{Example}
\theoremstyle{remark}
\newtheorem{rem}[thm]{Remark}
\newcolumntype{M}[1]{>{\centering\arraybackslash}m{#1}}
\newcommand{\F}{\mathbb{F}}
\newcommand{\plab}{\textrm{plab}}
\let\c@equation\c@thm
\numberwithin{equation}{section}
\title{The HOMFLY Polynomial of a Forest Quiver}
\author{Amanda Schwartz}
\address{Department of Mathematics, University of Michigan, 530 Church Street, Ann Arbor, MI 48109-1043, USA}
\email{\href{mailto:amschw@umich.edu}{amschw@umich.edu}}
\thanks{The author was supported by grants DMS-1840234 and DMS-1953852 from the National Science Foundation.}
\begin{document}

\begin{abstract}
We define the HOMFLY polynomial of a forest quiver $Q$ using a recursive definition on the underlying graph of the quiver. We then show that this polynomial is equal to the HOMFLY polynomial of any plabic link which comes from a connected plabic graph whose quiver is $Q$. We also prove a closed-form expression for the HOMFLY polynomial of a forest quiver $Q$ in terms of the independent sets of $Q$.
\end{abstract}
\maketitle

\setcounter{tocdepth}{1}
\tableofcontents

\section{Introduction}
There have been many developments in recent years relating cluster algebras and knot theory. In this work, we will define an invariant of forest quivers and relate it to certain link invariants. This gives one way to connect the study of forest quivers considered up to mutation equivalence and the study of associated links considered up to isotopy. Postnikov's plabic graphs, introduced in \cite{postnikov} to study a stratification of the totally nonnegative Grassmannian into positroid cells, will serve as an intermediate object in establishing this connection.

These plabic graphs and their generalizations have proved useful in establishing several connections between cluster algebras and knot theory. One can associate a plabic link to any plabic graph, as introduced in \cite{Fomin_2022, Shende_2019}. In  \cite{GLplabiclinks} Galashin and Lam studied connections between invariants of these plabic links and invariants of quivers associated to the plabic graphs. In \cite{galashin2024braidvarietyclusterstructures}, the authors defined 3D plabic graphs, generalizations of Postnikov's two-dimensional plabic graphs, and used them to construct a cluster structure on type $A$ braid varieties. The existence of cluster structures on links or related objects such as braid varieties has been the subject of much recent work; see \cite{ casals2024clusterstructuresbraidvarieties, casals2024positroidlinksbraidvarieties, galashin2023braidvarietyclusterstructures, Shende_2019} for several additional examples.

When a plabic graph is reduced, the associated plabic link is a positroid link and is isotopic to several other links one can obtain from objects in bijection with positroids; see \cite{casals2024positroidlinksbraidvarieties, GLplabiclinks} for more details. In this setting, polynomial invariants associated to these links can provide information about the objects associated to the plabic graph $G$. For example, if $G$ is a reduced plabic graph, then there is a rational function $R(Q;q)$ of the quiver $Q$ of $G$ which, after a normalization, yields the point count of the open positroid variety $\Pi^\circ(G)$ over a finite field $\F_q$; see \cite{GLplabiclinks, lam2021cohomologyclustervarietiesii}. There are also many connections relating positroid varieties and the Khovanov-Rozansky homology of their associated positroid links to Catalan combinatorics \cite{galashin2021positroidcatalannumbers, galashin2023positroidsknotsqtcatalannumbers, Mellit_2022, hogancamp2019toruslinkhomology}.

 The link invariants which we will study in this paper are the Alexander polynomial, which is the oldest knot polynomial, and a stronger invariant called the HOMFLY polynomial which specializes to the Alexander polynomial. The Alexander polynomial was discovered in 1928 by J.W. Alexander \cite{Alexander} while the HOMFLY polynomial was introduced in \cite{HOMFLY} and also studied independently in \cite{PT} in the 1980s. Several interesting connections between these link invariants and cluster algebras have been studied. In \cite{baziermatte2024knottheoryclusteralgebras} Bazier-Matte and Schiffler described a way to associate a cluster algebra to any link diagram and related the Alexander polynomial of the link to the $F$-polynomial of modules associated to the cluster algebra. Lee and Schiffler related the Jones polynomial, a different specialization of the HOMFLY polynomial, of a 2-bridge link to a specialization of a cluster variable in \cite{lee2017clusteralgebrasjonespolynomials}. 
 
 From the cluster algebra perspective, there has also been interest in finding and studying polynomial mutation invariants of quivers. For example, in \cite{fomin2024cyclicallyorderedquivers} Fomin and Neville studied invariants of cyclically ordered quivers, including an Alexander polynomial which they defined as the determinant of a matrix associated to the quiver. In this paper, we will focus on forest quivers, i.e. quivers whose underlying graphs are forests. This is a subset of the class of acyclic quivers which includes all type $A_n$, $D_n$, $E_6$, $E_7$, and $E_8$ quivers. We show that given any forest quiver, there is a reduced plabic graph with that forest quiver. Therefore, the set of links which we are studying includes certain positroid links.

We will begin by defining the HOMFLY polynomial of a forest quiver using a recursive definition on the underlying graph of the quiver. This can then be specialized to the Alexander polynomial of a forest quiver, just as the Alexander polynomial of a link is a specialization of the link's HOMFLY polynomial. Since the Alexander and HOMFLY polynomials depend only on the underlying graph of a quiver, we may instead refer to the Alexander or HOMFLY polynomial of a (undirected) forest. Some examples of trees and their Alexander and HOMFLY polynomials are shown in Table \ref{tab:polynomial examples}. 

In Theorem \ref{thm:welldef}, we show that for any plabic link which arises from a connected plabic graph whose quiver $Q_G$ is a forest, the HOMFLY polynomial of the link is the same as the HOMFLY polynomial of $Q_G$. Therefore, while plabic graphs serve as an intermediate object between quivers and plabic links, this result allows one to go directly from a forest quiver to an associated link invariant. While we initially define the HOMFLY polynomial of a forest quiver recursively, we also prove a closed-form formula for the polynomial in Theorem \ref{thm:homflyformula}.

\begin{table}
        \centering
\renewcommand{\arraystretch}{3}
\small
    \begin{tabular}
{M{0.15\textwidth}|M{0.3\textwidth}|M{0.45\textwidth}}
 \hline
 Tree & Alexander Polynomial & HOMFLY Polynomial \\ [0.5ex] 
 \hline\hline
  $A_4$ \hspace{0.15cm} \begin{tikzpicture}[scale = 0.5]
     \node [circle, draw=black, fill=black, scale =0.3] (1) at (0,0.5) {};
     \node [circle, draw=black, fill=black, scale =0.3] (2) at (0.5, 0.5) {};
     \node [circle, draw=black, fill=black, scale =0.3] (3) at (1, 0.5) {};
     \node [circle, draw=black, fill=black, scale =0.3] (4) at (1.5, 0.5) {};
     \draw [thick] (1) -- (2) -- (3) -- (4);
 \end{tikzpicture} & $t^{-2}\left(t^4 - t^3 + t^2 - t + 1\right)$ & $\frac{z^4 + 4z^2 + 3}{a^4} - \frac{z^2 + 2}{a^6}$ \\ 
 \hline
 $D_5$ \hspace{0.15cm} \begin{tikzpicture}[scale = 0.5]
    \node [circle, draw=black, fill=black, scale =0.3] (0) at (0,0.25) {};
     \node [circle, draw=black, fill=black, scale =0.3] (1) at (0,0.75) {};
     \node [circle, draw=black, fill=black, scale =0.3] (2) at (0.5, 0.5) {};
     \node [circle, draw=black, fill=black, scale =0.3] (3) at (1, 0.5) {};
     \node [circle, draw=black, fill=black, scale =0.3] (4) at (1.5, 0.5) {};
     \draw [thick] (1) -- (2) -- (0);
     \draw [thick] (2) -- (3)--(4);
 \end{tikzpicture} & $t^{-5/2}(t^5 - t^4 + t - 1) $ & $\frac{z^5 + 5z^3 + 6z + 2z^{-1}}{a^5} - \frac{z^3 + 4z + 3z^{-1}}{a^7} + \frac{z^{-1}}{{a^9}}$  \\
 \hline
 $E_6$ \hspace{0.15cm} \begin{tikzpicture}[scale = 0.5]
     \node [circle, draw=black, fill=black, scale =0.3] (1) at (0,0.5) {};
     \node [circle, draw=black, fill=black, scale =0.3] (2) at (0.5, 0.5) {};
     \node [circle, draw=black, fill=black, scale =0.3] (3) at (1, 0.5) {};
     \node [circle, draw=black, fill=black, scale =0.3] (4) at (1.5, 0.5) {};
     \node [circle, draw=black, fill=black, scale =0.3] (5) at (2, 0.5) {};
     \node [circle, draw=black, fill=black, scale =0.3] (6) at (1, 1) {};
     \draw [thick] (1)--(2)--(3)--(4)--(5);
     \draw [thick] (3)--(6);
 \end{tikzpicture} & $t^{-3}\left(t^6-t^5+t^3-t+1 \right)$ & $\frac{z^6 + 6z^4 + 10z^2 + 5}{a^6} - \frac{z^4 + 5z^2 + 5}{a^8} + \frac{1}{a^{10}}$  \\
 \hline
  $E_7$ \hspace{0.15cm} \begin{tikzpicture}[scale = 0.5]
     \node [circle, draw=black, fill=black, scale =0.3] (1) at (0,0.5) {};
     \node [circle, draw=black, fill=black, scale =0.3] (2) at (0.5, 0.5) {};
     \node [circle, draw=black, fill=black, scale =0.3] (3) at (1, 0.5) {};
     \node [circle, draw=black, fill=black, scale =0.3] (4) at (1.5, 0.5) {};
     \node [circle, draw=black, fill=black, scale =0.3] (5) at (2, 0.5) {};
     \node [circle, draw=black, fill=black, scale =0.3] (7) at (2.5, 0.5) {};
     \node [circle, draw=black, fill=black, scale =0.3] (6) at (1, 1) {};
     \draw [thick] (1)--(2)--(3)--(4)--(5)--(7);
     \draw [thick] (3)--(6); \end{tikzpicture} & $t^{-7/2}\left(t^7-t^6+t^4-t^3+t-1\right) $ & $\frac{z^7 + 7z^5 + 15z^3 + 11z + 2z^{-1}}{a^7}- \frac{z^5 + 6z^3 + 9z + 3z^{-1}}{a^9} + \frac{z + z^{-1}}{a^{11}}$  \\
 \hline
 $E_8$ \hspace{0.15cm} \begin{tikzpicture}[scale = 0.5]
     \node [circle, draw=black, fill=black, scale =0.3] (1) at (0,0.5) {};
     \node [circle, draw=black, fill=black, scale =0.3] (2) at (0.5, 0.5) {};
     \node [circle, draw=black, fill=black, scale =0.3] (3) at (1, 0.5) {};
     \node [circle, draw=black, fill=black, scale =0.3] (4) at (1.5, 0.5) {};
     \node [circle, draw=black, fill=black, scale =0.3] (5) at (2, 0.5) {};
     \node [circle, draw=black, fill=black, scale =0.3] (7) at (2.5, 0.5) {};
     \node [circle, draw=black, fill=black, scale =0.3] (8) at (3, 0.5) {};
     \node [circle, draw=black, fill=black, scale =0.3] (6) at (1, 1) {};
     \draw [thick] (1)--(2)--(3)--(4)--(5)--(7) -- (8);
     \draw [thick] (3)--(6); \end{tikzpicture} & $t^{-4}\left(t^8-t^7+t^5-t^4+t^3-t+1 \right)$  & $\frac{z^8 + 8z^6 + 21z^4 + 21z^2 + 7}{a^8} - \frac{z^6 + 7z^4 + 14z^2 + 8}{a^{10}} + \frac{z^2 + 2}{a^{12}}$  \\
     \hline
      \begin{tikzpicture}[scale = 0.5]
     \node [circle, draw=black, fill=black, scale =0.3] (1) at (0,0.5) {};
     \node [circle, draw=black, fill=black, scale =0.3] (2) at (0.5, 0.5) {};
     \node [circle, draw=black, fill=black, scale =0.3] (3) at (1, 0.5) {};
     \node [circle, draw=black, fill=black, scale =0.3] (4) at (1.5, 0.5) {};
     \node [circle, draw=black, fill=black, scale =0.3] (5) at (2, 0.5) {};
     \node [circle, draw=black, fill=black, scale =0.3] (8) at (2.5, 0.75) {};
     \node [circle, draw=black, fill=black, scale =0.3] (11) at (2.5, 0.25) {};
     \node [circle, draw=black, fill=black, scale =0.3] (6) at (1, 1) {};
     \node [circle, draw=black, fill=black, scale =0.3] (10) at (1, 0) {};
     \draw [thick] (1)--(2)--(3)--(4)--(5) -- (8);
     \draw [thick] (6)--(3) -- (10); 
     \draw [thick] (11)--(5); 
     \end{tikzpicture} & $\begin{aligned}&t^{-9/2}(t^9 - t^8 - 3t^7 + 7t^6 - 8t^5 \\
     &\quad+ 8t^4 - 7t^3 + 3t^2 + t - 1) \end{aligned}$ & $\frac{z^9 + 9z^7 + 28z^5 + 39z^3 + 28z + 11z^{-1} + 2z^{-3}}{a^9} - \frac{z^7 + 11z^5 + 36z^3 + 47z + 28z^{-1} + 7z^{-3}}{a^{11}} + \frac{5z^3 + 20z + 23z^{-1} + 9z^{-3}}{a^{13}} - \frac{z + 6z^{-1} + 5z^{-3}}{a^{15}} + \frac{z^{-3}}{a^{17}}$ \\ 
     [1ex] 
 \hline
\end{tabular}
    \caption{Examples of trees with their Alexander and HOMFLY polynomials.}
    \label{tab:polynomial examples}
\end{table}

With regard to future work, there are several ways in which our results could possibly be extended. For example, one could attempt to generalize the formula in Theorem \ref{thm:homflyformula} to a wider class of quivers. It would also be interesting to study link homologies like Khovanov-Rozansky homology and knot Floer homology - which categorify the HOMFLY and Alexander polynomials, respectively - of links arising from plabic graphs whose quivers are forests.
\subsection*{Acknowledgements} I thank my advisor, Thomas Lam, for his guidance and comments. I also thank Eugene Gorsky for helpful discussions. 

\section{Preliminaries}
\subsection{Plabic Graphs and Plabic Links}

A \emph{plabic graph} $G$ is a planar, bicolored graph which is embedded in the disk and whose vertices are colored black and white. We assume that $G$ has $N$ vertices on the boundary of the disk which are labeled $1,2,\dots, N$ in a clockwise order, are colored black, and each have degree $1$. A face in $G$ is said to be a \emph{boundary} (resp. \emph{interior}) face if it is (resp. is not) adjacent to the boundary of the disk. A \emph{strand} in $G$ is a path which follows the edges in $G$, obeying the \emph{rules of the road}. That is, the path turns maximally right at each black vertex and maximally left at each white vertex. The \emph{strand permutation} $\pi_G$ of $G$ is a permutation on $N$ obtained by setting $\pi_G(i)=j$ if the strand starting at boundary vertex $i$ ends at boundary vertex $j$. A plabic graph without internal leaves is said to be \emph{reduced} if it has the minimal number of faces among all such plabic graphs with the strand permutation $\pi_G$. See Figure \ref{fig:plabic graph examples} for an example of two plabic graphs, one reduced and one not reduced, which have the same strand permutation. 
\begin{figure}
    \centering
    \begin{tikzpicture}[scale=0.8]
        \draw[dashed] (0,0) circle (2);
        \node [circle, fill=white, draw=black, scale=0.5] (w1) at (-1, 0.75) {};
        \node [circle, fill=black, draw=black, scale=0.5] (b1) at (0, 0.75) {};
        \node [circle, fill=white, draw=black, scale=0.5] (w2) at (1, 0.75) {};
        \node [circle, fill=black, draw=black, scale=0.5] (b2) at (1, -0.75) {};
        \node [circle, fill=white, draw=black, scale=0.5] (w3) at (0, -0.75) {};
        \node [circle, fill=black, draw=black, scale=0.5] (b3) at (-1, -0.75) {};
        \node [circle, fill=black, draw=black, scale=0.2] (1) at (90:2) {};
        \node [above] at (90:2) {$1$};
        \node [circle, fill=black, draw=black, scale=0.2] (2) at (30:2) {};
        \node [right] at (30:2) {$2$};
        \node [circle, fill=black, draw=black, scale=0.2] (3) at (330:2) {};
        \node [right] at (330:2) {$3$};
        \node [circle, fill=black, draw=black, scale=0.2] (4) at (210:2) {};
        \node [left] at (210:2) {$4$};
        \node [circle, fill=black, draw=black, scale=0.2] (5) at (150:2) {};
        \node [left] at (150:2) {$5$};
        \draw[thick] (w1) -- (b1) -- (w2) -- (b2) -- (w3) -- (b3) -- (w1);
        \draw[thick] (b1) -- (w3);
        \draw[thick] (5) -- (w1);
        \draw[thick] (1) -- (b1);
        \draw[thick] (2) -- (w2);
        \draw[thick] (3) -- (b2);
        \draw[thick] (4) -- (b3);
    \end{tikzpicture}\hspace{1in}
    \begin{tikzpicture}[scale=0.8]
        \draw[dashed] (0,0) circle (2);
        \node [circle, fill=white, draw=black, scale=0.5] (w1) at (-1, 0.75) {};
        \node [circle, fill=black, draw=black, scale=0.5] (b1) at (0, 0.75) {};
        \node [circle, fill=white, draw=black, scale=0.5] (w2) at (1, 0.75) {};
        \node [circle, fill=black, draw=black, scale=0.5] (b2) at (1, -0.75) {};
        \node [circle, fill=white, draw=black, scale=0.5] (w3) at (0, -0.75) {};
        \node [circle, fill=black, draw=black, scale=0.5] (b3) at (-1, -0.75) {};
        \node [circle, fill=black, draw=black, scale=0.5] (B1) at (1, 0.25) {};
        \node [circle, fill=black, draw=black, scale=0.5] (B2) at (1, -0.25) {};
        \node [circle, fill=black, draw=black, scale=0.2] (1) at (90:2) {};
        \node [above] at (90:2) {$1$};
        \node [circle, fill=black, draw=black, scale=0.2] (2) at (30:2) {};
        \node [right] at (30:2) {$2$};
        \node [circle, fill=black, draw=black, scale=0.2] (3) at (330:2) {};
        \node [right] at (330:2) {$3$};
        \node [circle, fill=black, draw=black, scale=0.2] (4) at (210:2) {};
        \node [left] at (210:2) {$4$};
        \node [circle, fill=black, draw=black, scale=0.2] (5) at (150:2) {};
        \node [left] at (150:2) {$5$};
        \draw[thick] (w1) -- (b1) -- (w2) -- (B1);
        \draw[thick] (B2) -- (b2) -- (w3) -- (b3) -- (w1);
        \draw[thick] (b1) -- (w3);
        \draw[thick] (5) -- (w1);
        \draw[thick] (1) -- (b1);
        \draw[thick] (2) -- (w2);
        \draw[thick] (3) -- (b2);
        \draw[thick] (4) -- (b3);
        \draw[thick] (B1) to [out=300, in = 60] (B2) to [out = 120, in = 240] (B1);
    \end{tikzpicture}
    \caption{Two plabic graphs, one reduced (left) and one not reduced (right), which both have strand permutation $\pi = (1\; 4\; 2\; 5\; 3)$.}
    \label{fig:plabic graph examples}
\end{figure}
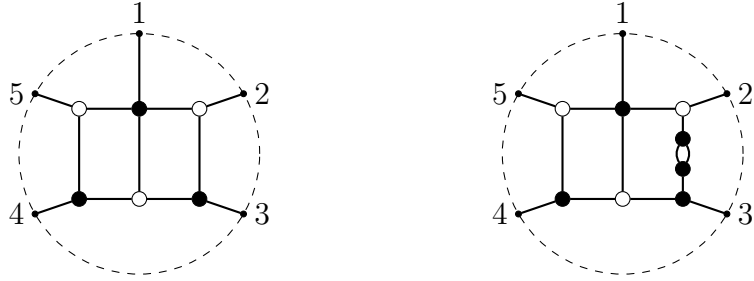

One can associate a plabic link $L_G^\plab$ to a plabic graph $G$ as follows. For more details, including an alternative description in terms of divides, see \cite{Fomin_2022, GLplabiclinks}. Draw all the strands of $G$. When two strands $S_1$ and $S_2$ cross at a point $p$, consider the arguments $\theta_1$ and $\theta_2$ of their tangent vectors at $p$, respectively, when considered in the complex plane. We assume that $0< \theta_1\neq \theta_2 <2\pi$. If $\theta_1$ is greater than (resp. less than) $\theta_2$ then $S_1$ goes under (resp. over) $S_2$. At any point where the tangent vector to a strand has argument 0, the strand must be adjusted as follows. If there is a point $p$ on a strand $S$ where the argument changes from being just below $2\pi$ to being just above $0$ as one travels through $p$ along $S$, then we break $S$ at $p$, sending it to the boundary just before reaching $p$ and then back to continue along its original path after $p$. Along the way to the boundary, it passes under all other strands it crosses, and on the way back to the strand $S$, it passes over all other strands. If the argument instead changes at $p$ from being just above 0 to being just below $2\pi$, the procedure is the same except the strand crosses above all other strands when heading to the boundary and under all others when returning from the boundary. This is demonstrated in Figure \ref{fig:arg0}.
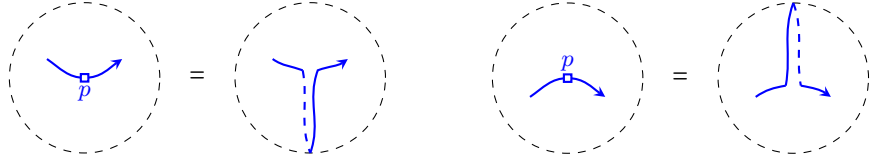
\begin{figure}
    \centering
    \begin{tikzpicture}
        \draw[dashed] (0,0) circle (1);
        \node at (1.5,0) {$=$};
        \draw[dashed] (3,0) circle (1);
        \draw[thick, blue, -stealth] (-0.5, 0.25) to [out=325, in = 180] (0,0) to [out=0, in =215] (0.5, 0.25);
        \draw[thick, fill=white, draw=blue] (-0.05,-0.05) rectangle ++(0.1, 0.1);
        \node[blue] at (0, -0.2) {\scriptsize$p$};
        \draw[thick, blue] (2.5, 0.25) to [out=325, in = 160] (2.9, 0.1);
        \draw[thick, blue, dashed] (2.9, 0.1) to [out=280, in = 110] (3, -1); 
        \draw[thick, blue, -stealth] (3, -1) to [out=70, in = 250] (3.1, 0.1) to [out=20, in=215] (3.5, 0.25); 
    \end{tikzpicture}\hspace{0.5in}
    \begin{tikzpicture}
        \draw[dashed] (0,0) circle (1);
        \node at (1.5,0) {$=$};
        \draw[dashed] (3,0) circle (1);
        \draw[thick, blue, -stealth] (-0.5, -0.25) to [out=35, in = 180] (0,0) to [out=0, in =145] (0.5, -0.25);
        \draw[thick, fill=white, draw=blue] (-0.05,-0.05) rectangle ++(0.1, 0.1);
        \node[blue] at (0, 0.2) {\scriptsize$p$};
        \draw[thick, blue] (2.5, -0.25) to [out=35, in = 190] (2.9,-0.1);
        \draw[thick, blue] (2.9,-0.1) to [out=80, in = 250] (3,1);
        \draw[thick, blue, dashed] (3,1) to [out=290, in = 100] (3.1, -0.1);
        \draw[thick, blue, -stealth] (3.1, -0.1) to [out=350, in =145] (3.5, -0.25);
    \end{tikzpicture}
    \caption{Modifications at a point $p$ on a strand where the tangent vector has argument 0.}
    \label{fig:arg0}
\end{figure}Note that we sometimes mark such points with rectangles. Taking the union of all these strands after these adjustments and connecting strands which start and end at the same boundary vertex gives a link diagram for $L_G^\plab$. For an example, see Figure \ref{fig:link example} for the plabic link of the leftmost graph in Figure \ref{fig:plabic graph examples}.
\begin{figure}
    \centering
    \begin{tikzpicture}[scale=1.2]
        \draw[dashed] (0,0) circle (2);
        \node [circle, fill=white, draw=black, scale=0.5] (w1) at (-1, 0.75) {};
        \node [circle, fill=black, draw=black, scale=0.5] (b1) at (0, 0.75) {};
        \node [circle, fill=white, draw=black, scale=0.5] (w2) at (1, 0.75) {};
        \node [circle, fill=black, draw=black, scale=0.5] (b2) at (1, -0.75) {};
        \node [circle, fill=white, draw=black, scale=0.5] (w3) at (0, -0.75) {};
        \node [circle, fill=black, draw=black, scale=0.5] (b3) at (-1, -0.75) {};
        \node [circle, fill=black, draw=black, scale=0.2] (1) at (90:2) {};
        \node [above] at (90:2) {$1$};
        \node [circle, fill=black, draw=black, scale=0.2] (2) at (30:2) {};
        \node [right] at (30:2) {$2$};
        \node [circle, fill=black, draw=black, scale=0.2] (3) at (330:2) {};
        \node [right] at (330:2) {$3$};
        \node [circle, fill=black, draw=black, scale=0.2] (4) at (210:2) {};
        \node [left] at (210:2) {$4$};
        \node [circle, fill=black, draw=black, scale=0.2] (5) at (150:2) {};
        \node [left] at (150:2) {$5$};
        \draw[thick] (w1) -- (b1) -- (w2) -- (b2) -- (w3) -- (b3) -- (w1);
        \draw[thick] (b1) -- (w3);
        \draw[thick] (5) -- (w1);
        \draw[thick] (1) -- (b1);
        \draw[thick] (2) -- (w2);
        \draw[thick] (3) -- (b2);
        \draw[thick] (4) -- (b3);
    \begin{knot}[clip width=4, consider self intersections, end tolerance=0.03pt, flip crossing=3, flip crossing=5, flip crossing=6]
    \strand[blue, thick] (0.5, 0.8) to [out=20, in = 110] (30:1.9) to [out=290, in =80] (1.2, 0.5) to [out=260, in=20] (0.6, -0.9) to [out=200, in = 0] (0, -1) to [out=180, in=340] (-0.6, -0.9) to [out= 160, in = 280] (-1.2, 0.5) to [out=100, in=250] (150:1.9) to [out=70, in = 160] (-0.5,0.8) to [out=340, in=100] (-0.1, -0.3) to [out=280, in=160] (0.9, -1) to [out=340, in = 250] (330:1.9) to [out=70, in = 280] (0.9, -0.2) to [out=100, in = 340] (0.4, 0.9) to [out=160, in =0] (90:1.9) to [out=180, in = 20] (-0.4, 0.9) to [out=200, in =80] (-0.9,-0.2) to [out=260, in=110] (210:1.9) to [out=290, in=200] (-0.9, -1) to [out=20, in=260] (0.1, -0.3) to [out=80, in=200] (0.5, 0.8);
    \end{knot}
     \draw[blue, fill=white] (-1.45,-1.2) rectangle ++(0.15cm, 0.15cm);
     \draw[blue, fill=white] (1.45,-1.2) rectangle ++(-0.15cm, 0.15cm);
     \draw[blue, fill=white] (1.45,1.05) rectangle ++(-0.15cm, 0.15cm);
     \draw[blue, fill=white] (-1.45,1.05) rectangle ++(0.15cm, 0.15cm);
     \draw[blue, thick, -stealth] (0, -1) -- (-0.1, -1);
     \draw[blue, thick, stealth-] (-0.25, 1.05) -- (-0.23, 1.15);
     \draw[blue, thick, -stealth]  (0.23, 1.15) -- (0.22, 1.25);
     \draw[blue, thick, stealth-] (-0.145, 0.05) -- (-0.152, 0.15);
     \draw[blue, thick, -stealth] (0.145, 0.15) -- (0.152, 0.25);
     \node [circle, fill=black, draw=black, scale=0.5] (b2) at (1, -0.75) {};
        \node [circle, fill=white, draw=black, scale=0.5] (w3) at (0, -0.75) {};
        \node [circle, fill=black, draw=black, scale=0.5] (b3) at (-1, -0.75) {};
    \end{tikzpicture}
    \caption{A plabic graph $G$ and its plabic link $L_G^\plab$.}
    \label{fig:link example}
\end{figure}
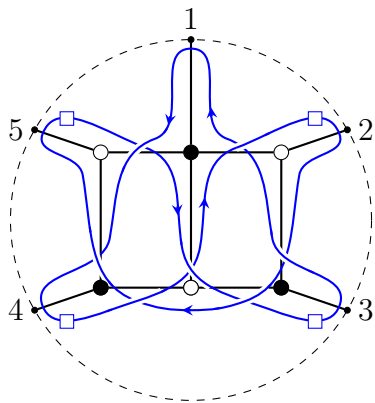

We will allow for certain local moves on plabic graphs, pictured in Figure \ref{fig:local moves}.
\begin{figure}
    \centering
    \begin{tikzpicture}[scale=0.8]
        \node at (-1.5, 0) {(a)};
        \node[circle, scale =0.4, fill=white, draw=black] (w1) at (-0.5,0.5) {};
        \node[circle, scale =0.4, fill=black, draw=black] (b1) at (0.5,0.5) {};
        \node[circle, scale =0.4, fill=white, draw=black] (w2) at (0.5,-0.5) {};
        \node[circle, scale =0.4, fill=black, draw=black] (b2) at (-0.5,-0.5) {};
        \draw[thick] (w1) -- (b1) -- (w2) -- (b2) -- (w1);
        \draw[thick] (-0.75,0.75) -- (w1);
        \draw[thick] (0.75,0.75) -- (b1);
        \draw[thick] (0.75,-0.75) -- (w2);
        \draw[thick] (-0.75,-0.75) -- (b2);
        \draw [stealth-stealth, gray] (1.5,0) -- (2.5,0);
        \node[circle, scale =0.4, fill=black, draw=black] (w1) at (3.5,0.5) {};
        \node[circle, scale =0.4, fill=white, draw=black] (b1) at (4.5,0.5) {};
        \node[circle, scale =0.4, fill=black, draw=black] (w2) at (4.5,-0.5) {};
        \node[circle, scale =0.4, fill=white, draw=black] (b2) at (3.5,-0.5) {};
        \draw[thick] (w1) -- (b1) -- (w2) -- (b2) -- (w1);
        \draw[thick] (3.25,0.75) -- (w1);
        \draw[thick] (4.75,0.75) -- (b1);
        \draw[thick] (4.75,-0.75) -- (w2);
        \draw[thick] (3.25,-0.75) -- (b2);
    \end{tikzpicture}\\
    \vspace{0.25in}
    \begin{tikzpicture}[scale=0.8]
        \node at (-1.5, 0) {(b)};
        \node[circle, scale=0.4, fill=white, draw=black] (w1) at (0,0) {};
        \node[circle, scale=0.4, fill=white, draw=black] (w2) at (1,0) {};
        \draw[thick] (w1) -- (w2);
        \draw[thick] (-0.5, 0.5)-- (w1) -- (-0.75, 0);
        \draw[thick] (-0.5, -0.5)-- (w1);
        \draw[thick] (1.5, 0.5)-- (w2) -- (1.5, -0.5);
        \draw [stealth-stealth, gray] (1.9,0) -- (2.9,0);
        \node[circle, scale=0.4, fill=white, draw=black] (w) at (4,0) {};
        \draw[thick] (3.5, 0.5)-- (w) -- (3.4, 0);
        \draw[thick] (3.5, -0.5)-- (w);
        \draw[thick] (4.5, 0.5)-- (w) -- (4.5, -0.5);
    \end{tikzpicture}\hspace{0.5in}
    \begin{tikzpicture}[scale=0.8]
        \node[circle, scale=0.4, fill=black, draw=black] (w1) at (0,0) {};
        \node[circle, scale=0.4, fill=black, draw=black] (w2) at (1,0) {};
        \draw[thick] (w1) -- (w2);
        \draw[thick] (-0.5, 0.5)-- (w1) -- (-0.75, 0);
        \draw[thick] (-0.5, -0.5)-- (w1);
        \draw[thick] (1.5, 0.5)-- (w2) -- (1.5, -0.5);
        \draw [stealth-stealth, gray] (1.9,0) -- (2.9,0);
        \node[circle, scale=0.4, fill=black, draw=black] (w) at (4,0) {};
        \draw[thick] (3.5, 0.5)-- (w) -- (3.4, 0);
        \draw[thick] (3.5, -0.5)-- (w);
        \draw[thick] (4.5, 0.5)-- (w) -- (4.5, -0.5);
    \end{tikzpicture}\vspace{0.25in}
    \begin{tikzpicture}[scale=0.8]
        \node at (-1.5, 0) {(c)};
        \node[circle, scale=0.4, fill=black, draw=black] (w) at (0,0) {};
        \draw[thick] (-1,0) -- (w) -- (1,0);
        \draw [stealth-stealth, gray] (1.5,0) -- (2.5,0);
        \draw[thick] (3.5,0) -- (4.5,0);
        \draw [stealth-stealth, gray] (5.5,0) -- (6.5,0);
        \node[circle, scale=0.4, fill=white, draw=black] (w) at (8,0) {};
        \draw[thick] (7,0) -- (w) -- (9,0);
    \end{tikzpicture}\vspace{0.25in}
    \begin{tikzpicture}[scale=0.8]
        \node at (-1.5, 0) {(d)};
        \node[circle, scale=0.4, fill=black, draw=black] (b) at (0,0) {};
        \node[circle, scale=0.2, fill=black, draw=black] (boundary1) at (0,-1) {};
        \node[circle, scale=0.2, fill=black, draw=black] (boundary2) at (8,-1) {};
        \draw[thick] (-1,0) -- (b) -- (1,0);
        \draw[thick] (b) -- (boundary1);
        \draw[dashed] (-1,-1) -- (1,-1);
        \draw [stealth-stealth, gray] (1.5,0) -- (2.5,0);
        \draw[thick] (3.5,0) -- (4.5,0);
        \draw[dashed] (3.5,-1) -- (4.5,-1);
        \draw [stealth-stealth, gray] (5.5,0) -- (6.5,0);
        \node[circle, scale=0.4, fill=white, draw=black] (w) at (8,0) {};
        \draw[thick] (7,0) -- (w) -- (9,0);
        \draw[thick] (w) -- (boundary2);
        \draw[dashed] (7,-1) -- (9,-1);
    \end{tikzpicture}
    \caption{Local moves on plabic graphs: (a) square move, (b) contraction/uncontraction, (c) middle vertex insertion/removal, and (d) tail addition/removal.}
    \label{fig:local moves}
\end{figure}
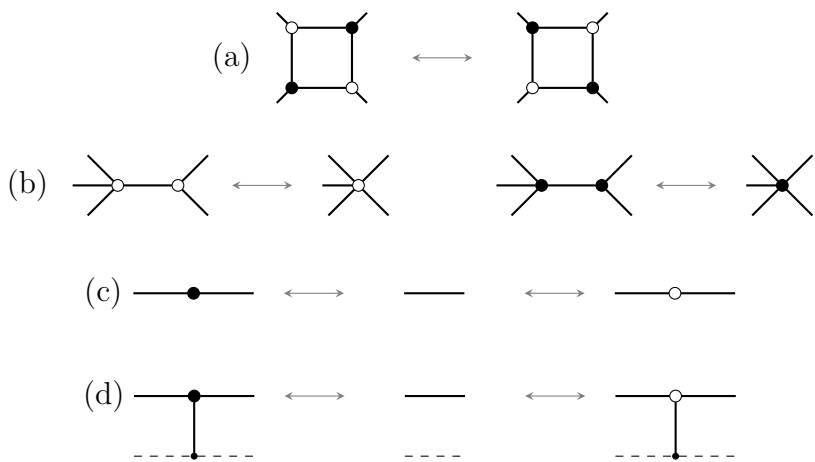
  Applying local moves (a) - (c) does not affect the property of being reduced since they do not change the strand permutation nor the number of faces in the graph. We will refer to the set of all plabic graphs obtained from a plabic graph $G$ using moves (a) - (c) as the \emph{move-equivalence class} of $G$. A graph is reduced if and only if no graph in its move-equivalence class contains either of the following, pictured in Figure \ref{fig:nonreduced}:
  \begin{enumerate}[label=(\roman*)]
      \item two trivalent vertices of opposite color which are connected by two edges, or
      \item an interior leaf which is connected to another interior vertex of the opposite color and degree at least 3.
  \end{enumerate}
 
  \begin{figure}
      \centering
      \begin{tikzpicture}
         \begin{scope}
          \node at (-1.5, 0) {(i)};
             \node[circle, scale =0.5, fill=white, draw=black] (w) at (-0.5,0) {};
             \node[circle, scale =0.5, fill=black, draw=black] (b) at (0.5,0) {};

             \draw[thick] (-1,0) -- (w) to [out=70, in = 110] (b) -- (1,0);
             \draw[thick] (b) to [out=250, in=290] (w);
         \end{scope} 
         
         \begin{scope}[xshift = 4cm]
          \node at (-1.5, 0) {(ii)};
             \node[circle, scale =0.5, fill=white, draw=black] (w) at (-0.5,0) {};
             \node[circle, scale =0.5, fill=black, draw=black] (b) at (0.5,0) {};

             \draw[thick] (-1,0.5) -- (w) -- (-1, -0.5);
             \draw[thick] (w)--(b);
         \end{scope} 
         \begin{scope}[xshift = 6.5cm]
          \node at (-1.5, 0) {,};
             \node[circle, scale =0.5, fill=black, draw=black] (b) at (-0.5,0) {};
             \node[circle, scale =0.5, fill=white, draw=black] (w) at (0.5,0) {};

             \draw[thick] (-1,0.5) -- (b) -- (-1, -0.5);
             \draw[thick] (w)--(b);
         \end{scope} 
      \end{tikzpicture}
      \caption{A plabic graph is reduced if and only if no graph in its move-equivalence class contains either configurations of type (i) or (ii).}
      \label{fig:nonreduced}
  \end{figure}
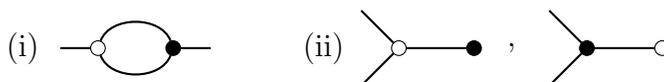
  
  All four local moves (a) - (d) result in isotopic plabic links and therefore will not affect any of the polynomial invariants we study in this paper. The \emph{bipartite reduction} of a plabic graph $G$ refers to the result of using local move (b) to contract all edges whose endpoints are interior vertices of the same color. The \emph{tail reduction} of a plabic graph $G$ refers to the graph which results from applying tail removal to $G$ until it is no longer possible to do so. An interior face $F$ of $G$ is said to be a \emph{boundary leaf face} if in the tail reduction of $G$ its boundary consists of two vertices of different colors which are connected by two edges, one of which separates $F$ from a boundary face and one of which separates $F$ from an interior face.

One can associate a directed, planar graph $Q_G$ to each plabic graph $G$ as follows. There will be one vertex $v_F$ placed inside each interior face $F$ of $G$. An edge is placed between vertices $v_F$ and $v_{F'}$ for each edge $e$ in $G$ with opposite colored endpoints such that $F$ and $F'$ are adjacent to $e$. The edge in $Q_G$ is oriented so that as one travels along it, the white vertex in $e$ is to the left. If this graph $Q_G$ contains no loops or 2-cycles, i.e. it is a quiver, then $G$ is said to be \emph{simple}. In this case, we may refer to $Q_G$ as the quiver associated to $G$. Throughout the rest of this paper, we will work only with simple plabic graphs $G$ since we will assume that $Q_G$ is a forest quiver. Note that local moves (b) - (d) will not change the quiver of a plabic graph.

If $G$ is a connected, simple plabic graph, then $G$ can be assumed to be trivalent, i.e. we can assume that all internal vertices have degree 3. Local move (b) can be used to uncontract any interior vertex of degree higher than 3 into trivalent vertices while local move (c) can be used to remove degree 2 vertices. If $v$ is an interior leaf which is adjacent to a vertex $\tilde{v}$ of the same color, local move (b) can be used to contract the edge containing $\tilde{v}$ and $v$ into a single vertex. If on the other hand $v$ is an interior leaf which is adjacent to a vertex $\tilde{v}$ of the opposite color, then the face $F$ containing $v$ would need to be a boundary face. If it were an interior face, then there would be an edge in $Q_G$ which crosses the edge between $v$ and $\tilde{v}$ and connects $v_F \in Q_G$ to itself, contradicting the assumption that $Q_G$ has no loops. Let $G' = G-\{v\}$. Since $v$ is contained in a boundary face, $Q_G=Q_{G'}$. The assumption that $G$ is connected removes the possibility that $v$ is adjacent to a boundary vertex in a different connected component from other internal vertices of $G$. Therefore, one can also verify that $P_{L_G}^\plab=P_{L_{G'}}^\plab$. Therefore, for our purposes, one can replace $G$ with $G'$. After applying these local moves and removing internal leaves as necessary, we see that $G$ can be assumed to be trivalent.

When we restrict our setting to trivalent graphs, we will use local move (b) to refer instead to the following move\\
 \begin{center}
     \begin{tikzpicture}[scale=0.8]
        \node[circle, scale=0.4, fill=white, draw=black] (w1) at (0,0) {};
        \node[circle, scale=0.4, fill=white, draw=black] (w2) at (1,0) {};
        \draw[thick] (w1) -- (w2);
        \draw[thick] (-0.5, 0.5)-- (w1);
        \draw[thick] (-0.5, -0.5)-- (w1);
        \draw[thick] (1.5, 0.5)-- (w2) -- (1.5, -0.5);
        \draw [stealth-stealth, gray] (1.9,0) -- (2.9,0);
        \node[circle, scale=0.4, fill=white, draw=black] (w1) at (4,0.5) {};
        \node[circle, scale=0.4, fill=white, draw=black] (w2) at (4,-0.5) {};
        \draw[thick] (3.5, 1)-- (w1);
        \draw[thick] (3.5, -1)-- (w2);
        \draw[thick] (4.5, 1)-- (w1) --(w2)-- (4.5, -1);
    \end{tikzpicture}\hspace{0.5in}
    \begin{tikzpicture}[scale=0.8]
        \node[circle, scale=0.4, fill=black, draw=black] (w1) at (0,0) {};
        \node[circle, scale=0.4, fill=black, draw=black] (w2) at (1,0) {};
        \draw[thick] (w1) -- (w2);
        \draw[thick] (-0.5, 0.5)-- (w1);
        \draw[thick] (-0.5, -0.5)-- (w1);
        \draw[thick] (1.5, 0.5)-- (w2) -- (1.5, -0.5);
        \draw [stealth-stealth, gray] (1.9,0) -- (2.9,0);
        \node[circle, scale=0.4, fill=black, draw=black] (w1) at (4,0.5) {};
        \node[circle, scale=0.4, fill=black, draw=black] (w2) at (4,-0.5) {};
        \draw[thick] (3.5, 1)-- (w1);
        \draw[thick] (3.5, -1)-- (w2);
        \draw[thick] (4.5, 1)-- (w1) --(w2)-- (4.5, -1);
    \end{tikzpicture}
 \end{center}
 which follows from two applications of move (b) in Figure \ref{fig:local moves}.

Our focus in this paper will be on plabic links arising from simple plabic graphs $G$ whose quivers $Q_G$ are orientations of forests. Given any forest quiver $Q$, it is possible to find a plabic graph whose quiver is $Q$. In fact, one can choose such a graph to be reduced so that the resulting link is a positroid link. It was known to Lam and Speyer that one can find a reduced plabic graph whose quiver is $Q$ for any tree quiver $Q$, as mentioned in \cite{GLplabiclinks}, but their proof has not been published. We describe one way to construct such a graph in the following proposition. Then, we describe how to use this result to construct a reduced plabic graph $G$ with $Q_G=Q$ for any forest quiver $Q$.
\begin{prop}\label{prop:exists reduced}
    Let $Q$ be a tree quiver. Then there exists a connected, reduced plabic graph $G$ with $Q_G=Q$.
\end{prop}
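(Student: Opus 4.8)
The plan is to induct on the number of vertices $n$ of the underlying tree of $Q$. For the base case $n=1$ the quiver is a single vertex with no arrows, so it is enough to exhibit one connected reduced plabic graph having exactly one interior face that is adjacent only to boundary faces; a single even-length face carrying a pendant leg to the boundary at each of its vertices does this, and it is reduced because it contains no digon and no interior leaf, so no configuration of type (i) or (ii) in Figure \ref{fig:nonreduced} can arise in its move-equivalence class. Throughout the induction I would maintain the invariant that every interior face of the graph under construction is adjacent to at least one boundary face, which plainly holds in the base case.

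For the inductive step, pick a leaf $\ell$ of the underlying tree with unique neighbor $m$, and set $Q' = Q \setminus \{\ell\}$, a tree quiver on $n-1$ vertices. By the inductive hypothesis there is a connected reduced plabic graph $G'$ with $Q_{G'} = Q'$, which I may assume to be trivalent after applying local moves (b)--(d). Let $F_m$ be the interior face of $G'$ corresponding to $m$; by the invariant it has an edge $e$ on its boundary separating it from a boundary face. I would then splice in, in a neighborhood of $e$, a local \emph{pendant-face gadget} that carves out a new face $F_\ell$ lying between $F_m$ and the boundary, sharing with $F_m$ exactly one bichromatic edge $e'$ and running out to a new degree-one boundary vertex, so that $F_\ell$ is itself a boundary face. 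Because exactly one bichromatic edge is shared, $F_\ell$ contributes a single arrow, incident to $v_{F_m}$, and because $F_\ell$ meets the boundary it is adjacent to no other interior face, so no further arrows appear. The direction of the new arrow is governed by which endpoint of $e'$ is white; I would fix the coloring of the gadget, reflecting it if necessary, so that the white vertex lies on the correct side, matching the orientation of the edge between $\ell$ and $m$ in $Q$.

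Granting the gadget, the bookkeeping is routine. The graph $G$ is connected because the gadget is attached to the connected graph $G'$. One has $Q_G = Q$ since the vertex set gains only $v_{F_\ell}$, the unique new arrow is the correctly oriented one to $v_{F_m}$, and every other adjacency is untouched as the modification is confined to a neighborhood of $e$. I would also arrange the gadget so that $F_m$ keeps a boundary-adjacent edge and $F_\ell$ acquires one, which preserves the invariant and lets the induction continue.

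The main obstacle is verifying that $G$ stays reduced. I would argue this using the characterization recalled above: $G$ is reduced precisely when no graph in its move-equivalence class contains a configuration of type (i) or (ii) from Figure \ref{fig:nonreduced}. Since $G'$ is reduced and the gadget alters $G'$ only near $e$, introducing one new boundary vertex and a short detour of the strands passing through that neighborhood, I would track the one or two strands that are changed and confirm that the modification produces neither a digon nor the interior-leaf configuration of type (ii), and, in terms of the defining property, that $G$ attains the minimal number of faces among plabic graphs with its strand permutation. The delicate point is to design a single gadget whose local reducedness analysis goes through uniformly for every reduced $G'$, independent of the global structure of $G'$ around $e$; this is where I expect the bulk of the work to lie.
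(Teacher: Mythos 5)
Your overall strategy (induction on the tree by peeling off a leaf and splicing in a local gadget) is a legitimately different route from the paper, which instead gives a one-shot global construction: it draws one bichromatic edge of $G$ transversally across each edge of $Q$, joins them clockwise around each vertex of $Q$, closes off hexagonal faces at the leaves, and then adds boundary legs so that every interior face has degree at least six. But as written your argument has a genuine gap, and you have named it yourself: the pendant-face gadget is never exhibited, and the verification that attaching it preserves reducedness is deferred entirely (``this is where I expect the bulk of the work to lie''). That verification is the heart of the proposition, not bookkeeping. Reducedness is a property of the whole move-equivalence class, so a purely local modification near the edge $e$ does not automatically inherit reducedness from $G'$; one must either control the entire move-equivalence class of the new graph or compare face counts against all plabic graphs with the new strand permutation, and neither is done. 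The paper sidesteps this by arranging that \emph{every} interior face of its construction has degree at least six, which makes the square move inapplicable anywhere and collapses the move-equivalence class to graphs differing only by degree-two vertices and same-colored leaves; your gadget would need an analogous uniform structural guarantee to make the induction close.

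A secondary, fixable issue is your invariant that every interior face touches a boundary face. After inserting $F_\ell$ between $F_m$ and the boundary, you assert that $F_m$ ``keeps a boundary-adjacent edge,'' but if $e$ were the only boundary-adjacent edge of $F_m$, the gadget could consume it; you would need to design the gadget so that boundary faces survive on both sides of the new pendant face (or strengthen the invariant). None of this is fatal to the approach, but until the gadget is drawn and its reducedness analysis carried out uniformly in $G'$, the proof is incomplete.
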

\begin{proof}
  We will describe one algorithm to find such a graph $G$. To each edge $e$ in $Q$, draw an edge in $G$ perpendicularly across it. The endpoints of the edge in $G$ should be colored so that the black (resp. white) vertex is on the right (resp. left) as one travels along $e$ in the direction indicated by its orientation. Then, connect these edges drawn in $G$ as they appear if one travels clockwise around each vertex of degree at least two in $Q$. At the leaves of $Q$, form a bipartite face of degree six by adding four extra vertices and connecting those four to the boundary. This process is pictured in the first two steps in Figure \ref{fig:constructing graph}. At this point, this intermediate graph $G'$ is a plabic graph with $Q_{G'}=Q$, but it will not necessarily be reduced. 

  We will build a reduced graph $G$ from $G'$ which still satisfies $Q_G=Q$ as follows. Given any edge $e$ in $G'$ which separates a boundary face from an interior face that does not correspond to a leaf in $Q$, we will add vertices which are connected to new boundary vertices. In particular, given any such edge which connects two vertices of the same color, add one vertex of the opposite color in the middle of the edge and connect it to a new boundary vertex. To any such edge which connects vertices of opposite colors, we add two vertices in the middle of the edge, placed so that the induced subgraph on these four vertices is bipartite. Then we connect these two newly added vertices to the boundary. See Figure \ref{fig:constructing graph} for the result of this procedure for the given example. 
  \begin{figure}
      \centering
      \begin{tikzpicture}
      \begin{scope}
          \node [fill=red, scale =0.5] (1) at (0,1) {};
     \node [fill=red, scale =0.5] (2) at (1, 1) {};
     \node [fill=red, scale =0.5] (3) at (2, 1) {};
     \node [fill=red, scale =0.5] (4) at (3, 1) {};
     \node [fill=red,scale =0.5] (5) at (4, 1) {};
     \node [fill=red, scale =0.5] (6) at (2, 2) {};
     \draw [thick, red, Stealth-] (1)--(2);
     \draw [thick, red, Stealth-] (2)--(3);
     \draw [thick, red, -Stealth] (3)--(4);
     \draw [thick, red, Stealth-] (4)--(5);
     \draw [thick,red, Stealth-] (3)--(6);
      \end{scope}
      \begin{scope}[xshift=5cm, yshift=1cm]
          \draw[thick, -Stealth, gray] (-0.5, 0) -- (0.5,0);
      \end{scope}
      \begin{scope}[xshift = 6cm]
          \node [fill=red, scale =0.5] (1) at (0,1) {};
     \node [fill=red, scale =0.5] (2) at (1, 1) {};
     \node [fill=red, scale =0.5] (3) at (2, 1) {};
     \node [fill=red, scale =0.5] (4) at (3, 1) {};
     \node [fill=red,scale =0.5] (5) at (4, 1) {};
     \node [fill=red, scale =0.5] (6) at (2, 2.5) {};
     \draw [thick, red, Stealth-] (1)--(2);
     \draw [thick, red, Stealth-] (2)--(3);
     \draw [thick, red, -Stealth] (3)--(4);
     \draw [thick, red, Stealth-] (4)--(5);
     \draw [thick,red, Stealth-] (3)--(6);
     \node [scale =0.5, circle, draw=black, fill=black] (1b) at (0.5, 1.5) {};
     \node [scale =0.5, circle, draw=black, fill=white] (1w) at (0.5,0.5) {};
     \node [scale =0.5, circle, draw=black, fill=black] (2b) at (1.5, 1.5) {};
     \node [scale =0.5, circle, draw=black, fill=white] (2w) at (1.5,0.5) {};
     \node [scale =0.5, circle, draw=black, fill=black] (3b) at (1.75, 2) {};
     \node [scale =0.5, circle, draw=black, fill=white] (3w) at (2.25,2) {};
     \node [scale =0.5, circle, draw=black, fill=black] (4b) at (2.5, 0.5) {};
     \node [scale =0.5, circle, draw=black, fill=white] (4w) at (2.5,1.5) {};
     \node [scale =0.5, circle, draw=black, fill=black] (5b) at (3.5, 1.5) {};
     \node [scale =0.5, circle, draw=black, fill=white] (5w) at (3.5,0.5) {};
     \draw[thick] (1b) -- (1w);
     \draw[thick] (2b) -- (2w);
     \draw[thick] (3b) -- (3w);
     \draw[thick] (4b) -- (4w);
     \draw[thick] (5b) -- (5w);
      \end{scope}
      
      \begin{scope}[xshift = 5cm, yshift = -0.5cm]
          \draw[thick, -Stealth, gray] (1, 0.5) -- (-1,-0.5);
      \end{scope}
      \begin{scope}[xshift = -1cm, yshift = -5cm]
         \node [fill=red, scale =0.5] (1) at (0,1) {};
     \node [fill=red, scale =0.5] (2) at (1, 1) {};
     \node [fill=red, scale =0.5] (3) at (2, 1) {};
     \node [fill=red, scale =0.5] (4) at (3, 1) {};
     \node [fill=red,scale =0.5] (5) at (4, 1) {};
     \node [fill=red, scale =0.5] (6) at (2, 2.5) {};
     \draw [thick, red, Stealth-] (1)--(2);
     \draw [thick, red, Stealth-] (2)--(3);
     \draw [thick, red, -Stealth] (3)--(4);
     \draw [thick, red, Stealth-] (4)--(5);
     \draw [thick,red, Stealth-] (3)--(6);
     \node [scale =0.5, circle, draw=black, fill=black] (1b) at (0.5, 1.5) {};
     \node [scale =0.5, circle, draw=black, fill=white] (1w) at (0.5,0.5) {};
     \node [scale =0.5, circle, draw=black, fill=black] (2b) at (1.5, 1.5) {};
     \node [scale =0.5, circle, draw=black, fill=white] (2w) at (1.5,0.5) {};
     \node [scale =0.5, circle, draw=black, fill=black] (3b) at (1.75, 2) {};
     \node [scale =0.5, circle, draw=black, fill=white] (3w) at (2.25,2) {};
     \node [scale =0.5, circle, draw=black, fill=black] (4b) at (2.5, 0.5) {};
     \node [scale =0.5, circle, draw=black, fill=white] (4w) at (2.5,1.5) {};
     \node [scale =0.5, circle, draw=black, fill=black] (5b) at (3.5, 1.5) {};
     \node [scale =0.5, circle, draw=black, fill=white] (5w) at (3.5,0.5) {};
     
     \node [scale =0.5, circle, draw=black, fill=white] (l1w1) at (0, 1.75) {};
     \node [scale =0.5, circle, draw=black, fill=black] (l1b1) at (-0.5,1.5) {};
     \node [scale =0.5, circle, draw=black, fill=white] (l1w2) at (-0.5, 0.5) {};
     \node [scale =0.5, circle, draw=black, fill=black] (l1b2) at (0,0.25) {};
     
     \node [scale =0.5, circle, draw=black, fill=white] (l3w1) at (1.5, 2.5) {};
     \node [scale =0.5, circle, draw=black, fill=black] (l3b1) at (1.75,3) {};
     \node [scale =0.5, circle, draw=black, fill=white] (l3w2) at (2.25, 3) {};
     \node [scale =0.5, circle, draw=black, fill=black] (l3b2) at (2.5,2.5) {};

     \node [scale =0.5, circle, draw=black, fill=white] (l5w1) at (4, 1.75) {};
     \node [scale =0.5, circle, draw=black, fill=black] (l5b1) at (4.5, 1.5) {};
     \node [scale =0.5, circle, draw=black, fill=white] (l5w2) at (4.5,0.5) {};
     \node [scale =0.5, circle, draw=black, fill=black] (l5b2) at (4,0.25) {};
     
     \draw[thick] (1b) -- (1w);
     \draw[thick] (2b) -- (2w);
     \draw[thick] (3b) -- (3w);
     \draw[thick] (4b) -- (4w);
     \draw[thick] (5b) -- (5w);
     \draw[thick] (l1b2) --(l1w2) -- (l1b1) -- (l1w1) -- (1b) -- (2b) -- (3b) --(l3w1) -- (l3b1) -- (l3w2) -- (l3b2) -- (3w) -- (4w) -- (5b) -- (l5w1) -- (l5b1) -- (l5w2) -- (l5b2) -- (5w) -- (4b) -- (2w) -- (1w) -- (l1b2);
     \node [circle, scale =0.2, fill=black, draw=black] (boundary1) at ($(150:3)+(2,1)$) {};
     \node [circle, scale =0.2, fill=black, draw=black] (boundary2) at ($(170:3)+(2,1)$) {};
     \node [circle, scale =0.2, fill=black, draw=black] (boundary3) at ($(190:3)+(2,1)$) {};
     \node [circle, scale =0.2, fill=black, draw=black] (boundary4) at ($(210:3)+(2,1)$) {};
     \draw[thick] (l1w1) -- (boundary1);
     \draw[thick] (l1b1) -- (boundary2);
     \draw[thick] (l1w2) -- (boundary3);
     \draw[thick] (l1b2) -- (boundary4);

     \node [circle, scale =0.2, fill=black, draw=black] (boundary5) at ($(30:3)+(2,1)$) {};
     \node [circle, scale =0.2, fill=black, draw=black] (boundary6) at ($(10:3)+(2,1)$) {};
     \node [circle, scale =0.2, fill=black, draw=black] (boundary7) at ($(350:3)+(2,1)$) {};
     \node [circle, scale =0.2, fill=black, draw=black] (boundary8) at ($(330:3)+(2,1)$) {};
     \draw[thick] (l5w1) -- (boundary5);
     \draw[thick] (l5b1) -- (boundary6);
     \draw[thick] (l5w2) -- (boundary7);
     \draw[thick] (l5b2) -- (boundary8);

     \node [circle, scale =0.2, fill=black, draw=black] (boundary9) at ($(120:3)+(2,1)$) {};
     \node [circle, scale =0.2, fill=black, draw=black] (boundary10) at ($(100:3)+(2,1)$) {};
     \node [circle, scale =0.2, fill=black, draw=black] (boundary11) at ($(80:3)+(2,1)$) {};
     \node [circle, scale =0.2, fill=black, draw=black] (boundary12) at ($(60:3)+(2,1)$) {};
     \draw[thick] (l3w1) -- (boundary9);
     \draw[thick] (l3b1) -- (boundary10);
     \draw[thick] (l3w2) -- (boundary11);
     \draw[thick] (l3b2) -- (boundary12);
     \draw[dashed] (2, 1) circle (3cm);

     \node () at (2, -2.5) {$G'$};
      \end{scope}

      \begin{scope}[xshift=5cm, yshift=-4cm]
          \draw[thick, -Stealth, gray] (-0.5, 0) -- (0.5,0);
      \end{scope}

      \begin{scope}[xshift = 7cm, yshift = -5cm]
         \node [fill=red, scale =0.5] (1) at (0,1) {};
     \node [fill=red, scale =0.5] (2) at (1, 1) {};
     \node [fill=red, scale =0.5] (3) at (2, 1) {};
     \node [fill=red, scale =0.5] (4) at (3, 1) {};
     \node [fill=red,scale =0.5] (5) at (4, 1) {};
     \node [fill=red, scale =0.5] (6) at (2, 2.5) {};
     \draw [thick, red, Stealth-] (1)--(2);
     \draw [thick, red, Stealth-] (2)--(3);
     \draw [thick, red, -Stealth] (3)--(4);
     \draw [thick, red, Stealth-] (4)--(5);
     \draw [thick,red, Stealth-] (3)--(6);
     \node [scale =0.5, circle, draw=black, fill=black] (1b) at (0.5, 1.5) {};
     \node [scale =0.5, circle, draw=black, fill=white] (1w) at (0.5,0.5) {};
     \node [scale =0.5, circle, draw=black, fill=black] (2b) at (1.5, 1.5) {};
     \node [scale =0.5, circle, draw=black, fill=white] (2w) at (1.5,0.5) {};
     \node [scale =0.5, circle, draw=black, fill=black] (3b) at (1.75, 2) {};
     \node [scale =0.5, circle, draw=black, fill=white] (3w) at (2.25,2) {};
     \node [scale =0.5, circle, draw=black, fill=black] (4b) at (2.5, 0.5) {};
     \node [scale =0.5, circle, draw=black, fill=white] (4w) at (2.5,1.5) {};
     \node [scale =0.5, circle, draw=black, fill=black] (5b) at (3.5, 1.5) {};
     \node [scale =0.5, circle, draw=black, fill=white] (5w) at (3.5,0.5) {};
     
     \node [scale =0.5, circle, draw=black, fill=white] (l1w1) at (0, 1.75) {};
     \node [scale =0.5, circle, draw=black, fill=black] (l1b1) at (-0.5,1.5) {};
     \node [scale =0.5, circle, draw=black, fill=white] (l1w2) at (-0.5, 0.5) {};
     \node [scale =0.5, circle, draw=black, fill=black] (l1b2) at (0,0.25) {};
     
     \node [scale =0.5, circle, draw=black, fill=white] (l3w1) at (1.5, 2.5) {};
     \node [scale =0.5, circle, draw=black, fill=black] (l3b1) at (1.75,3) {};
     \node [scale =0.5, circle, draw=black, fill=white] (l3w2) at (2.25, 3) {};
     \node [scale =0.5, circle, draw=black, fill=black] (l3b2) at (2.5,2.5) {};

     \node [scale =0.5, circle, draw=black, fill=white] (l5w1) at (4, 1.75) {};
     \node [scale =0.5, circle, draw=black, fill=black] (l5b1) at (4.5, 1.5) {};
     \node [scale =0.5, circle, draw=black, fill=white] (l5w2) at (4.5,0.5) {};
     \node [scale =0.5, circle, draw=black, fill=black] (l5b2) at (4,0.25) {};
     
     \draw[thick] (1b) -- (1w);
     \draw[thick] (2b) -- (2w);
     \draw[thick] (3b) -- (3w);
     \draw[thick] (4b) -- (4w);
     \draw[thick] (5b) -- (5w);
     \draw[thick] (l1b2) --(l1w2) -- (l1b1) -- (l1w1) -- (1b) -- (2b) -- (3b) --(l3w1) -- (l3b1) -- (l3w2) -- (l3b2) -- (3w) -- (4w) -- (5b) -- (l5w1) -- (l5b1) -- (l5w2) -- (l5b2) -- (5w) -- (4b) -- (2w) -- (1w) -- (l1b2);
     \node [circle, scale =0.2, fill=black, draw=black] (boundary1) at ($(150:3)+(2,1)$) {};
     \node [circle, scale =0.2, fill=black, draw=black] (boundary2) at ($(170:3)+(2,1)$) {};
     \node [circle, scale =0.2, fill=black, draw=black] (boundary3) at ($(190:3)+(2,1)$) {};
     \node [circle, scale =0.2, fill=black, draw=black] (boundary4) at ($(210:3)+(2,1)$) {};
     \draw[thick] (l1w1) -- (boundary1);
     \draw[thick] (l1b1) -- (boundary2);
     \draw[thick] (l1w2) -- (boundary3);
     \draw[thick] (l1b2) -- (boundary4);

     \node [circle, scale =0.2, fill=black, draw=black] (boundary5) at ($(30:3)+(2,1)$) {};
     \node [circle, scale =0.2, fill=black, draw=black] (boundary6) at ($(10:3)+(2,1)$) {};
     \node [circle, scale =0.2, fill=black, draw=black] (boundary7) at ($(350:3)+(2,1)$) {};
     \node [circle, scale =0.2, fill=black, draw=black] (boundary8) at ($(330:3)+(2,1)$) {};
     \draw[thick] (l5w1) -- (boundary5);
     \draw[thick] (l5b1) -- (boundary6);
     \draw[thick] (l5w2) -- (boundary7);
     \draw[thick] (l5b2) -- (boundary8);

     \node [circle, scale =0.2, fill=black, draw=black] (boundary9) at ($(120:3)+(2,1)$) {};
     \node [circle, scale =0.2, fill=black, draw=black] (boundary10) at ($(100:3)+(2,1)$) {};
     \node [circle, scale =0.2, fill=black, draw=black] (boundary11) at ($(80:3)+(2,1)$) {};
     \node [circle, scale =0.2, fill=black, draw=black] (boundary12) at ($(60:3)+(2,1)$) {};
     \draw[thick] (l3w1) -- (boundary9);
     \draw[thick] (l3b1) -- (boundary10);
     \draw[thick] (l3w2) -- (boundary11);
     \draw[thick] (l3b2) -- (boundary12);
     \draw[dashed] (2, 1) circle (3cm);

     \node [scale =0.5, circle, draw=black, fill=white] (e1) at (1, 1.5) {};
     \node [scale =0.5, circle, draw=black, fill=black] (e2) at (1, 0.5) {};
     \node [scale =0.5, circle, draw=black, fill=white] (e3) at (1.625, 1.75) {};
     \node [scale =0.5, circle, draw=black, fill=black] (e4) at (2.375, 1.75) {};
     \node [scale =0.5, circle, draw=black, fill=black] (e5) at (2.833, 1.5) {};
     \node [scale =0.5, circle, draw=black, fill=white] (e6) at (3.1667, 1.5) {};
     \node [scale =0.5, circle, draw=black, fill=white] (e7) at (2.833, 0.5) {};
     \node [scale =0.5, circle, draw=black, fill=black] (e8) at (3.1667, 0.5) {};
     \node [scale =0.5, circle, draw=black, fill=black] (e9) at (1.833, 0.5) {};
     \node [scale =0.5, circle, draw=black, fill=white] (e10) at (2.1667, 0.5) {};

     \node [circle, scale =0.2, fill=black, draw=black] (boundary13) at ($(140:3)+(2,1)$) {};
     \node [circle, scale =0.2, fill=black, draw=black] (boundary14) at ($(240:3)+(2,1)$) {};
     \node [circle, scale =0.2, fill=black, draw=black] (boundary15) at ($(130:3)+(2,1)$) {};
     \node [circle, scale =0.2, fill=black, draw=black] (boundary16) at ($(50:3)+(2,1)$) {};
     \node [circle, scale =0.2, fill=black, draw=black] (boundary17) at ($(40:3)+(2,1)$) {};
     \node [circle, scale =0.2, fill=black, draw=black] (boundary18) at ($(35:3)+(2,1)$) {};
     \node [circle, scale =0.2, fill=black, draw=black] (boundary19) at ($(300:3)+(2,1)$) {};
     \node [circle, scale =0.2, fill=black, draw=black] (boundary20) at ($(310:3)+(2,1)$) {};
     \node [circle, scale =0.2, fill=black, draw=black] (boundary21) at ($(260:3)+(2,1)$) {};
     \node [circle, scale =0.2, fill=black, draw=black] (boundary22) at ($(280:3)+(2,1)$) {};
     
     \draw[thick] (e1) -- (boundary13);
     \draw[thick] (e2) -- (boundary14);
     \draw[thick] (e3) -- (boundary15);
     \draw[thick] (e4) -- (boundary16);
     \draw[thick] (e5) -- (boundary17);
     \draw[thick] (e6) -- (boundary18);
     \draw[thick] (e7) -- (boundary19);
     \draw[thick] (e8) -- (boundary20);
     \draw[thick] (e9) -- (boundary21);
     \draw[thick] (e10) -- (boundary22);

     \node () at (2, -2.5) {$G$};
      \end{scope}
      \end{tikzpicture}
      \caption{A procedure for finding a reduced plabic graph with a given tree quiver $Q$.}
      \label{fig:constructing graph}
  \end{figure}
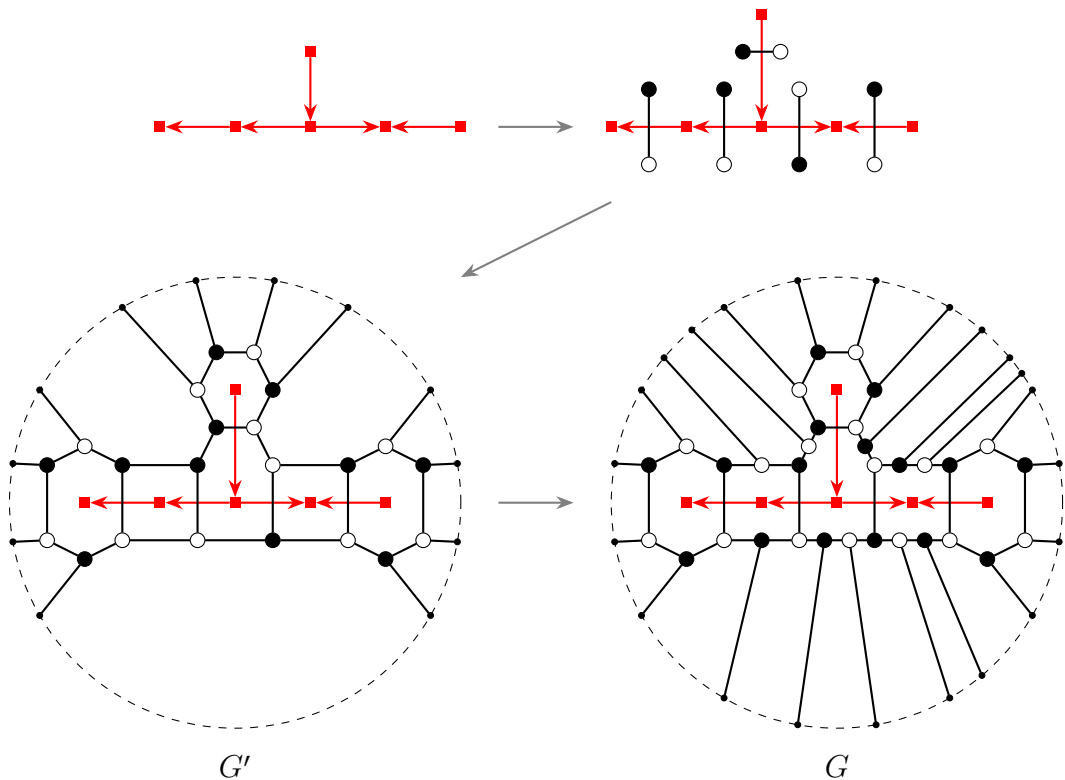
  To see that the result is reduced, first observe that the graph does not contain any of the forbidden configurations pictured in Figure \ref{fig:nonreduced}. It remains to check that none of the graphs in the move-equivalence class of $G$ contain these configurations either. Each interior face in $G$ has degree at least six. Therefore, local move (a) cannot be applied directly to $G$. One can check that all all graphs in the move-equivalence class of $G$ which are obtained by local moves (b) and (c) look like $G$ up to the addition of degree two vertices along existing edges or repeated creation of interior leaves which are the same color as the vertex they are adjacent to. Local move (a) still cannot be applied to these graphs, so these graphs constitute the entire move-equivalence class of $G$. None of these graphs contain any of the configurations in Figure \ref{fig:nonreduced}, so $G$ is reduced.
\end{proof}
\begin{rem}
    In general, the procedure described in the proof of Proposition \ref{prop:exists reduced} will not result in a plabic graph with the minimal number of boundary vertices out of all reduced plabic graphs with a given tree quiver. For example, see Figure \ref{fig:tworeduced} for two reduced graphs whose quiver is $A_2$, one of which comes from the procedure described above and another with fewer boundary vertices.
\end{rem}

\begin{figure}
    \centering
    \begin{tikzpicture}
    \begin{scope}
    \draw[dashed] (0,0) circle (2);
        \node [circle, fill=white, draw=black, scale=0.5] (w1) at (-1, 0.75) {};
        \node [circle, fill=black, draw=black, scale=0.5] (b1) at (0, 0.75) {};
        \node [circle, fill=white, draw=black, scale=0.5] (w2) at (1, 0.75) {};
        \node [circle, fill=black, draw=black, scale=0.5] (b2) at (1, -0.75) {};
        \node [circle, fill=white, draw=black, scale=0.5] (w3) at (0, -0.75) {};
        \node [circle, fill=black, draw=black, scale=0.5] (b3) at (-1, -0.75) {};
        \node [circle, fill=black, draw=black, scale=0.2] (1) at (90:2) {};
        \node [circle, fill=black, draw=black, scale=0.2] (2) at (30:2) {};
        \node [circle, fill=black, draw=black, scale=0.2] (3) at (330:2) {};
        \node [circle, fill=black, draw=black, scale=0.2] (4) at (210:2) {};
        \node [circle, fill=black, draw=black, scale=0.2] (5) at (150:2) {};
        
        \draw[thick] (w1) -- (b1) -- (w2) -- (b2) -- (w3) -- (b3) -- (w1);
        \draw[thick] (b1) -- (w3);
        \draw[thick] (5) -- (w1);
        \draw[thick] (1) -- (b1);
        \draw[thick] (2) -- (w2);
        \draw[thick] (3) -- (b2);
        \draw[thick] (4) -- (b3);
        \node [fill=red, scale = 0.7] (v1) at (-0.5,0) {};
        \node [fill=red, scale = 0.7] (v2) at (0.5,0) {};
        \draw[thick, red, -Stealth] (v2) -- (v1);
    \end{scope}

    \begin{scope}[xshift = 6cm]
    \draw[dashed] (0,0) circle (2);
        \node [circle, fill=white, draw=black, scale=0.5] (w1) at (-0.75, 1) {};
        \node [circle, fill=black, draw=black, scale=0.5] (b1) at (0, 0.75) {};
        \node [circle, fill=white, draw=black, scale=0.5] (w2) at (0.75, 1) {};
        \node [circle, fill=black, draw=black, scale=0.5] (b2) at (1.25, 0.5) {};
        \node [circle, fill=white, draw=black, scale=0.5] (w3) at (1.25, -0.5) {};
        \node [circle, fill=black, draw=black, scale=0.5] (b3) at (0.75, -1) {};
         \node [circle, fill=white, draw=black, scale=0.5] (w4) at (0, -0.75) {};
        \node [circle, fill=black, draw=black, scale=0.5] (b4) at (-0.75, -1) {};
         \node [circle, fill=white, draw=black, scale=0.5] (w5) at (-1.25, -0.5) {};
        \node [circle, fill=black, draw=black, scale=0.5] (b5) at (-1.25, 0.5) {};

        \node [circle, fill=black, draw=black, scale=0.2] (1) at (120:2) {};
        \node [circle, fill=black, draw=black, scale=0.2] (2) at (60:2) {};
        \node [circle, fill=black, draw=black, scale=0.2] (3) at (20:2) {};
        \node [circle, fill=black, draw=black, scale=0.2] (4) at (340:2) {};
        \node [circle, fill=black, draw=black, scale=0.2] (5) at (300:2) {};
        \node [circle, fill=black, draw=black, scale=0.2] (6) at (240:2) {};
        \node [circle, fill=black, draw=black, scale=0.2] (7) at (200:2) {};
        \node [circle, fill=black, draw=black, scale=0.2] (8) at (160:2) {};

        \draw[thick] (w1) -- (b1) -- (w2) -- (b2) -- (w3) -- (b3) -- (w4) -- (b4) -- (w5) -- (b5) -- (w1);
        \draw[thick] (b1) -- (w4);
        \draw[thick] (w1) -- (1);
        \draw[thick] (w2) -- (2);
        \draw[thick] (b2) -- (3);
        \draw[thick] (w3) -- (4);
        \draw[thick] (b3) -- (5);
        \draw[thick] (b4) -- (6);
        \draw[thick] (w5) -- (7);
        \draw[thick] (b5) -- (8);
        
        \node [fill=red, scale = 0.7] (v1) at (-0.5,0) {};
        \node [fill=red, scale = 0.7] (v2) at (0.5,0) {};
        \draw[thick, red, -Stealth] (v2) -- (v1);
    \end{scope}

    \end{tikzpicture}
    \caption{Two reduced graphs with $A_2$ as their quiver.}
    \label{fig:tworeduced}
\end{figure}
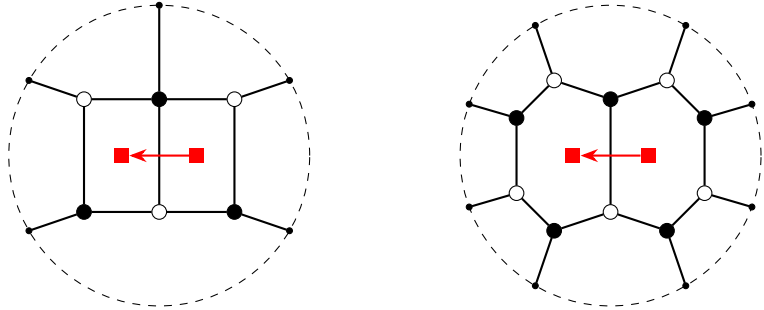

\begin{cor}
  Let $Q$ be a forest quiver. Then there exists a connected, reduced plabic graph $G$ with $Q_G=Q$.  
\end{cor}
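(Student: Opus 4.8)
The plan is to reduce to the tree case already settled in Proposition~\ref{prop:exists reduced} and then glue the resulting graphs into a single connected graph in a way that introduces no new arrows. Write $Q = T_1 \sqcup \cdots \sqcup T_k$ as the disjoint union of its connected components, each of which is a tree quiver. By Proposition~\ref{prop:exists reduced}, for each $i$ there is a connected, reduced plabic graph $G_i$ with $Q_{G_i}=T_i$, and we may assume each $G_i$ is trivalent. The only thing missing is to make the union connected without altering the quiver or breaking reducedness.

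First I would place $G_1,\dots,G_k$ in disjoint regions of a single disk $D$, with their boundary vertices appearing in the clockwise order $G_1,G_2,\dots,G_k$ around $\partial D$ and arranged so that the region separating the graphs is a single boundary face $F_0$. The interior faces of this (disconnected) graph are exactly the interior faces of the individual $G_i$, so its quiver is already $T_1 \sqcup \cdots \sqcup T_k = Q$. To repair connectivity I would join $G_i$ to $G_{i+1}$ by a single \emph{bridge} edge $e_i$: choose a tail of $G_i$ with interior endpoint $v_i$ and a tail of $G_{i+1}$ with interior endpoint $v_{i+1}$, delete the two boundary vertices of those tails, and connect $v_i$ to $v_{i+1}$ by an edge routed through $F_0$ (equivalently, identify the two deleted boundary vertices and remove the resulting degree-two vertex with local move (c)). Because each $v_i$ had a tail replaced by the bridge, its degree is unchanged. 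After adding the $k-1$ bridges $e_1,\dots,e_{k-1}$ the graph $G$ is connected.

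The point for the quiver is that each $e_i$ is a bridge: removing it disconnects the graph, so both of its sides border the \emph{same} face, namely the boundary face $F_0$. Hence $e_i$ creates no new face and, in particular, does not separate two interior faces; since arrows of $Q_G$ arise only from edges separating two interior faces, the bridges contribute no arrows. As all remaining edges and all interior faces are inherited unchanged from the $G_i$, we get $Q_G = T_1 \sqcup \cdots \sqcup T_k = Q$.

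The main obstacle is checking that $G$ is still reduced, which I would verify through the move-equivalence characterization recalled in Figure~\ref{fig:nonreduced}. Since adding $e_i$ preserves the degrees of $v_i$ and $v_{i+1}$ and creates no leaf, no configuration of type (ii) is produced at the new edges; and a bridge lies in no cycle, so it can never be one of the two edges of a bigon, excluding configurations of type (i). The bridges attach the $G_i$ only at interior vertices through $F_0$, introducing no multiple edges, and on the level of strands they merely delete the chosen boundary vertices and concatenate in pairs the strands of the separate $G_i$, leaving the strands otherwise intact. Using that each $G_i$ is reduced, I would then argue that no graph in the move-equivalence class of $G$ contains either forbidden configuration, since any such configuration would have to lie entirely within one of the reduced pieces. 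This yields a connected, reduced plabic graph $G$ with $Q_G = Q$.
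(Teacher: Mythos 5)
Your proposal is correct and follows essentially the same route as the paper: decompose $Q$ into tree components, invoke Proposition \ref{prop:exists reduced} for each, place the resulting graphs in one disk, and join consecutive pieces by bridge edges that border the outer boundary face on both sides, so that no new interior faces (hence no new arrows) are created, with reducedness checked via the move-equivalence criterion of Figure \ref{fig:nonreduced}. The only cosmetic difference is that the paper subdivides a boundary-adjacent edge in each piece and connects the two new vertices, whereas you splice the bridge into existing tails; these constructions are related by local moves (c) and (d).
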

\begin{proof}
    Suppose $Q = Q_1 \sqcup Q_2 \sqcup \dots \sqcup Q_k$ where $Q_i$ is a non-empty tree quiver for $i=1,\dots, k$. Let $G_i$ be the reduced plabic graph with quiver $Q_i$ which results from the algorithm in the proof of Proposition \ref{prop:exists reduced}. Place the graphs $G_1, G_2, \dots, G_k$ clockwise in order in a larger disk. Then, for $i=1,\dots, k-1$, pick a pair of edges $e_i$ and $e_{i+1}$ where $e_i$ (resp. $e_{i+1}$) is an edge in $G_i$ (resp. $G_{i+1}$) which has a boundary vertex $b_i$ (resp. $b_{i+1}$) as one of its endpoints such that $b_i$ and $b_{i+1}$ are adjacent boundary vertices. Place a vertex on $e_i$ and a vertex on $e_{i+1}$, and connect these two vertices with an edge. Let $G$ be the result of this process. 
    
    Since there were no paths connecting vertices in $G_i$ and $G_{i+1}$ before the $i$-th step of the above procedure, the addition of this edge in this step does not create any additional interior faces. Therefore, once again local move (a) cannot be applied directly to $G$. The move-equivalence class of $G$ consists of graphs which differ from it by additional of degree two vertices, the repeated creation of leaves which are the same color as the vertex they are adjacent to, or potentially contraction of an edge that has endpoints of the same color and which separates two boundary faces. None of these graphs contain the configurations in Figure \ref{fig:nonreduced}, so $G$ is reduced.
\end{proof}

\begin{exmp}
    See Figure \ref{fig:forest reduced} for an example of a reduced plabic graph whose quiver is the pictured orientation of $E_6 \sqcup A_2$.
\end{exmp}

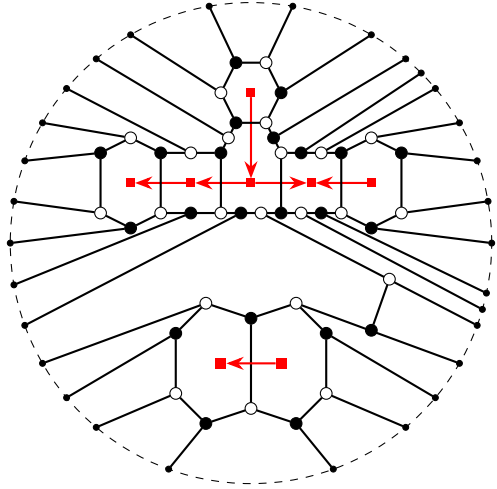
\begin{figure}
    \centering
    \begin{tikzpicture}[scale=0.8]
        \begin{scope}
         \node [fill=red, scale =0.4] (1) at (0,2) {};
     \node [fill=red, scale =0.4] (2) at (1, 2) {};
     \node [fill=red, scale =0.4] (3) at (2, 2) {};
     \node [fill=red, scale =0.4] (4) at (3, 2) {};
     \node [fill=red,scale =0.4] (5) at (4, 2) {};
     \node [fill=red, scale =0.4] (6) at (2, 3.5) {};
     \draw [thick, red, Stealth-] (1)--(2);
     \draw [thick, red, Stealth-] (2)--(3);
     \draw [thick, red, -Stealth] (3)--(4);
     \draw [thick, red, Stealth-] (4)--(5);
     \draw [thick,red, Stealth-] (3)--(6);
     \node [scale =0.4, circle, draw=black, fill=black] (1b) at (0.5, 2.5) {};
     \node [scale =0.4, circle, draw=black, fill=white] (1w) at (0.5,1.5) {};
     \node [scale =0.4, circle, draw=black, fill=black] (2b) at (1.5, 2.5) {};
     \node [scale =0.4, circle, draw=black, fill=white] (2w) at (1.5,1.5) {};
     \node [scale =0.4, circle, draw=black, fill=black] (3b) at (1.75, 3) {};
     \node [scale =0.4, circle, draw=black, fill=white] (3w) at (2.25,3) {};
     \node [scale =0.4, circle, draw=black, fill=black] (4b) at (2.5, 1.5) {};
     \node [scale =0.4, circle, draw=black, fill=white] (4w) at (2.5,2.5) {};
     \node [scale =0.4, circle, draw=black, fill=black] (5b) at (3.5, 2.5) {};
     \node [scale =0.4, circle, draw=black, fill=white] (5w) at (3.5,1.5) {};
     
     \node [scale =0.4, circle, draw=black, fill=white] (l1w1) at (0, 2.75) {};
     \node [scale =0.4, circle, draw=black, fill=black] (l1b1) at (-0.5,2.5) {};
     \node [scale =0.4, circle, draw=black, fill=white] (l1w2) at (-0.5, 1.5) {};
     \node [scale =0.4, circle, draw=black, fill=black] (l1b2) at (0,1.25) {};
     
     \node [scale =0.4, circle, draw=black, fill=white] (l3w1) at (1.5, 3.5) {};
     \node [scale =0.4, circle, draw=black, fill=black] (l3b1) at (1.75,4) {};
     \node [scale =0.4, circle, draw=black, fill=white] (l3w2) at (2.25, 4) {};
     \node [scale =0.4, circle, draw=black, fill=black] (l3b2) at (2.5,3.5) {};

     \node [scale =0.4, circle, draw=black, fill=white] (l5w1) at (4, 2.75) {};
     \node [scale =0.4, circle, draw=black, fill=black] (l5b1) at (4.5, 2.5) {};
     \node [scale =0.4, circle, draw=black, fill=white] (l5w2) at (4.5,1.5) {};
     \node [scale =0.4, circle, draw=black, fill=black] (l5b2) at (4,1.25) {};
     
     \draw[thick] (1b) -- (1w);
     \draw[thick] (2b) -- (2w);
     \draw[thick] (3b) -- (3w);
     \draw[thick] (4b) -- (4w);
     \draw[thick] (5b) -- (5w);
     \draw[thick] (l1b2) --(l1w2) -- (l1b1) -- (l1w1) -- (1b) -- (2b) -- (3b) --(l3w1) -- (l3b1) -- (l3w2) -- (l3b2) -- (3w) -- (4w) -- (5b) -- (l5w1) -- (l5b1) -- (l5w2) -- (l5b2) -- (5w) -- (4b) -- (2w) -- (1w) -- (l1b2);
     \node [circle, scale =0.2, fill=black, draw=black] (boundary1) at ($(150:4)+(2,1)$) {};
     \node [circle, scale =0.2, fill=black, draw=black] (boundary2) at ($(160:4)+(2,1)$) {};
     \node [circle, scale =0.2, fill=black, draw=black] (boundary3) at ($(170:4)+(2,1)$) {};
     \node [circle, scale =0.2, fill=black, draw=black] (boundary4) at ($(180:4)+(2,1)$) {};
     \draw[thick] (l1w1) -- (boundary1);
     \draw[thick] (l1b1) -- (boundary2);
     \draw[thick] (l1w2) -- (boundary3);
     \draw[thick] (l1b2) -- (boundary4);

     \node [circle, scale =0.2, fill=black, draw=black] (boundary5) at ($(30:4)+(2,1)$) {};
     \node [circle, scale =0.2, fill=black, draw=black] (boundary6) at ($(20:4)+(2,1)$) {};
     \node [circle, scale =0.2, fill=black, draw=black] (boundary7) at ($(10:4)+(2,1)$) {};
     \node [circle, scale =0.2, fill=black, draw=black] (boundary8) at ($(0:4)+(2,1)$) {};
     \draw[thick] (l5w1) -- (boundary5);
     \draw[thick] (l5b1) -- (boundary6);
     \draw[thick] (l5w2) -- (boundary7);
     \draw[thick] (l5b2) -- (boundary8);

     \node [circle, scale =0.2, fill=black, draw=black] (boundary9) at ($(120:4)+(2,1)$) {};
     \node [circle, scale =0.2, fill=black, draw=black] (boundary10) at ($(100:4)+(2,1)$) {};
     \node [circle, scale =0.2, fill=black, draw=black] (boundary11) at ($(80:4)+(2,1)$) {};
     \node [circle, scale =0.2, fill=black, draw=black] (boundary12) at ($(60:4)+(2,1)$) {};
     \draw[thick] (l3w1) -- (boundary9);
     \draw[thick] (l3b1) -- (boundary10);
     \draw[thick] (l3w2) -- (boundary11);
     \draw[thick] (l3b2) -- (boundary12);

     \node [scale =0.4, circle, draw=black, fill=white] (e1) at (1, 2.5) {};
     \node [scale =0.4, circle, draw=black, fill=black] (e2) at (1, 1.5) {};
     \node [scale =0.4, circle, draw=black, fill=white] (e3) at (1.625, 2.75) {};
     \node [scale =0.4, circle, draw=black, fill=black] (e4) at (2.375, 2.75) {};
     \node [scale =0.4, circle, draw=black, fill=black] (e5) at (2.833, 2.5) {};
     \node [scale =0.4, circle, draw=black, fill=white] (e6) at (3.1667, 2.5) {};
     \node [scale =0.4, circle, draw=black, fill=white] (e7) at (2.833, 1.5) {};
     \node [scale =0.4, circle, draw=black, fill=black] (e8) at (3.1667, 1.5) {};
     \node [scale =0.4, circle, draw=black, fill=black] (e9) at (1.833, 1.5) {};
     \node [scale =0.4, circle, draw=black, fill=white] (e10) at (2.1667, 1.5) {};

     \node [circle, scale =0.2, fill=black, draw=black] (boundary13) at ($(140:4)+(2,1)$) {};
     \node [circle, scale =0.2, fill=black, draw=black] (boundary14) at ($(190:4)+(2,1)$) {};
     \node [circle, scale =0.2, fill=black, draw=black] (boundary15) at ($(130:4)+(2,1)$) {};
     \node [circle, scale =0.2, fill=black, draw=black] (boundary16) at ($(50:4)+(2,1)$) {};
     \node [circle, scale =0.2, fill=black, draw=black] (boundary17) at ($(45:4)+(2,1)$) {};
     \node [circle, scale =0.2, fill=black, draw=black] (boundary18) at ($(40:4)+(2,1)$) {};
     \node [circle, scale =0.2, fill=black, draw=black] (boundary19) at ($(344:4)+(2,1)$) {};
     \node [circle, scale =0.2, fill=black, draw=black] (boundary20) at ($(348:4)+(2,1)$) {};
     \node [circle, scale =0.2, fill=black, draw=black] (boundary21) at ($(200:4)+(2,1)$) {};
     \node [circle, scale =0.2, fill=black, draw=black] (boundary22) at ($(340:4)+(2,1)$) {};
     
     \draw[thick] (e1) -- (boundary13);
     \draw[thick] (e2) -- (boundary14);
     \draw[thick] (e3) -- (boundary15);
     \draw[thick] (e4) -- (boundary16);
     \draw[thick] (e5) -- (boundary17);
     \draw[thick] (e6) -- (boundary18);
     \draw[thick] (e7) -- (boundary19);
     \draw[thick] (e8) -- (boundary20);
     \draw[thick] (e9) -- (boundary21);
     \draw[thick] (e10) -- (boundary22);

      \end{scope}

      \begin{scope}
        \node [circle, fill=white, draw=black, scale=0.4] (w1) at (1.25, 0) {};
        \node [circle, fill=black, draw=black, scale=0.4] (b1) at (2, -0.25) {};
        \node [circle, fill=white, draw=black, scale=0.4] (w2) at (2.75, 0) {};
        \node [circle, fill=black, draw=black, scale=0.4] (b2) at (3.25, -0.5) {};
        \node [circle, fill=white, draw=black, scale=0.4] (w3) at (3.25, -1.5) {};
        \node [circle, fill=black, draw=black, scale=0.4] (b3) at (2.75, -2) {};
         \node [circle, fill=white, draw=black, scale=0.4] (w4) at (2, -1.75) {};
        \node [circle, fill=black, draw=black, scale=0.4] (b4) at (1.25, -2) {};
         \node [circle, fill=white, draw=black, scale=0.4] (w5) at (0.75, -1.5) {};
        \node [circle, fill=black, draw=black, scale=0.4] (b5) at (0.75, -0.5) {};

        \node [circle, fill=white, draw=black, scale=0.4] (aw) at (4.3, 0.4) {};
        \node [circle, fill=black, draw=black, scale=0.4] (ab) at (4, -0.45) {};

        \node [circle, fill=black, draw=black, scale=0.2] (1) at ($(210:4)+(2,1)$) {};
        \node [circle, fill=black, draw=black, scale=0.2] (2) at ($(330:4)+(2,1)$) {};
        \node [circle, fill=black, draw=black, scale=0.2] (3) at ($(320:4)+(2,1)$) {};
        \node [circle, fill=black, draw=black, scale=0.2] (4) at ($(310:4)+(2,1)$) {};
        \node [circle, fill=black, draw=black, scale=0.2] (5) at ($(290:4)+(2,1)$) {};
        \node [circle, fill=black, draw=black, scale=0.2] (6) at ($(250:4)+(2,1)$) {};
        \node [circle, fill=black, draw=black, scale=0.2] (7) at ($(230:4)+(2,1)$) {};
        \node [circle, fill=black, draw=black, scale=0.2] (8) at ($(220:4)+(2,1)$) {};

        \draw[thick] (w1) -- (b1) -- (w2) -- (b2) -- (w3) -- (b3) -- (w4) -- (b4) -- (w5) -- (b5) -- (w1);
        \draw[thick] (b1) -- (w4);
        \draw[thick] (w1) -- (1);
        \draw[thick] (w2) -- (2);
        \draw[thick] (b2) -- (3);
        \draw[thick] (w3) -- (4);
        \draw[thick] (b3) -- (5);
        \draw[thick] (b4) -- (6);
        \draw[thick] (w5) -- (7);
        \draw[thick] (b5) -- (8);
        
        \draw[thick] (aw) -- (ab);
        
        \node [fill=red, scale = 0.5] (v1) at (1.5,-1) {};
        \node [fill=red, scale = 0.5] (v2) at (2.5,-1) {};
        \draw[thick, red, -Stealth] (v2) -- (v1);
    \end{scope}
      \draw[dashed] (2, 1) circle (4cm);
    \end{tikzpicture}
    \caption{A reduced plabic graph with the pictured forest quiver.}
    \label{fig:forest reduced}
\end{figure}

\subsection{The HOMFLY and Alexander Polynomials}
The Alexander polynomial $\Delta(t)$, named after its discoverer J.W. Alexander \cite{Alexander}, was the first knot polynomial invariant to be discovered. John Conway studied a version of this polynomial called the Alexander-Conway polynomial which takes a value of 1 on the unknot satisfies the skein relation 

\begin{equation}\label{AlexanderConwaySkein}
  \nabla(L_+)-\nabla(L_-)=z\nabla(L_0)  
\end{equation}
where $L_+$, $L_-$, and $L_0$ are links whose diagrams are the same except locally at one location where they are related as follows:
\begin{center}
    \begin{tikzpicture}
 \draw[line width=0.3pt, dashed] (0,0) circle (0.5); 
 \draw[thick, -stealth] (300:0.5) -- (120:0.5);
 \draw[line width=4pt, white] (240:0.25) -- (60:0.25);
 \draw[thick, -stealth] (240:0.5) -- (60:0.5);
\node (Lplus) at (0,-1) {$L_+$};
\end{tikzpicture}\hspace{0.5in}
\begin{tikzpicture}
 \draw[line width=0.3pt, dashed] (0,0) circle (0.5); 
 \draw[thick, -stealth] (240:0.5) -- (60:0.5);
 \draw[line width=4pt, white] (300:0.25) -- (120:0.25);
 \draw[thick, -stealth] (300:0.5) -- (120:0.5);
\node (Lminus) at (0,-1) {$L_-$};
\end{tikzpicture}\hspace{0.5in}
\begin{tikzpicture}
 \draw[line width=0.3pt, dashed] (0,0) circle (0.5); 
 \draw[thick, -stealth] (240:0.5) to [out=60, in = 270] (-0.1, 0) to [out=90, in = 300] (120:0.5);
 \draw[thick, -stealth] (300:0.5) to [out=120, in = 270] (0.1, 0) to [out=90, in = 240] (60:0.5);
\node (Lzero) at (0,-1) {$L_0$};
\end{tikzpicture}
    \label{skeincrossings}
\end{center}
The Alexander-Conway polynomial $\nabla$ is related to the Alexander polynomial $\Delta$ via the relation $\nabla(t^{1/2}-t^{-1/2})= \Delta(t)$. Therefore, the Alexander polynomial satisfies the skein relation
\begin{equation}\label{AlexanderSkein}
  \Delta(L_+)-\Delta(L_-)=\left(t^{1/2}-t^{-1/2}\right)\Delta(L_0)  
\end{equation}
 Setting $\Delta(\textrm{unknot})=1$ fixes a specific choice of the Alexander polynomial for each oriented link, although typically the polynomial is defined up to multiplication by $\pm t^k$ for some $k$.

The Alexander polynomial is also a specialization of a stronger invariant called the HOMFLY polynomial, introduced in \cite{HOMFLY} and also studied independently in \cite{PT}. The HOMFLY polynomial is a Laurent polynomial in $a$ and $z$ defined by the skein relation
\begin{equation}\label{Homflyskein}
aP(L_+)-a^{-1}P(L_-)=zP(L_0)
\end{equation}
and setting $P(\textrm{unknot})=1$. Setting $a=1$ and $z=t^{1/2}-t^{-1/2}$ in the HOMFLY polynomial recovers the Alexander polynomial. In \cite{GLplabiclinks}, Galashin and Lam used this skein relation to prove the following lemma.
\begin{lem}\label{skeinrelation}
    Let $G$ be a simple plabic graph with a boundary leaf face $F$. Let $x$ and $y$ be the vertices on the boundary of $F$ and $e$ be the edge separating $F$ from a boundary face. Let $G'=G-e$ and $G''=G-\{x,y\}$. Then the HOMFLY polynomials of their plabic links satisfy
    \begin{equation}\label{blfskein}
        aP(L_G^\plab)-a^{-1}P(L_{G''}^\plab)=zP(L_{G'}^\plab)
    \end{equation}
\end{lem}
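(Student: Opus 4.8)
The plan is to exhibit $L_G^\plab$, $L_{G'}^\plab$, and $L_{G''}^\plab$ as the three diagrams $L_+$, $L_0$, $L_-$ of a single skein triple and then to read off \eqref{blfskein} directly from the HOMFLY skein relation \eqref{Homflyskein}. Since local moves (b)--(d) produce isotopic plabic links and hence preserve every HOMFLY polynomial in sight, I would first replace $G$ by its tail reduction. By the definition of a boundary leaf face, $F$ then appears as a bigon: two oppositely colored vertices $x$ and $y$ joined by the edge $e$, across which lies a boundary face, together with a second edge $f$, across which lies an interior face. All of the analysis can then be carried out in a fixed disk neighborhood $D$ of this bigon, because outside $D$ the graphs $G$, $G'=G-e$, and $G''=G-\{x,y\}$ coincide, and so do their strands.

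Inside $D$ I would draw the strands by the rules of the road, turning maximally left at the white vertex and maximally right at the black one. Tracing these shows that the strands traversing the bigon produce a single crossing $c$ in $D$; because $e$ borders a boundary face, the segment running along $e$ is precisely the one to which the argument-$0$ modification of Figure \ref{fig:arg0} applies, and this is what fixes the over/under data at $c$. The goal of this step is to verify that, with the induced orientations, $L_G^\plab$ realizes the \emph{positive} crossing $L_+$ at $c$; the color-swapped configuration, with $x$ black and $y$ white, should be handled identically and is what makes the sign pattern in \eqref{blfskein} come out uniformly.

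It then remains to identify the two resolutions. Deleting $e$ leaves $x$ and $y$ bivalent, and removing them by middle vertex removal (local move (c)) straightens the two segments meeting at $c$ into a single arc, which is exactly the oriented smoothing; hence $L_{G'}^\plab=L_0$. Deleting both $x$ and $y$ excises the bigon together with the boundary excursion along $e$ and reconnects the remaining strand ends, and I would check that this reconnection is isotopic to switching the crossing $c$, so that $L_{G''}^\plab=L_-$. Substituting $L_+=L_G^\plab$, $L_-=L_{G''}^\plab$, and $L_0=L_{G'}^\plab$ into \eqref{Homflyskein} then yields \eqref{blfskein} verbatim.

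The main obstacle is the local crossing bookkeeping in the middle step: correctly determining which strand passes over at $c$ from the tangent-argument rule in the presence of the argument-$0$ modification, and, above all, confirming that deleting $\{x,y\}$ genuinely produces the crossing-switched diagram $L_-$ rather than some other tangle. I expect the cleanest way to control this is to record the contents of $D$ as an oriented two-strand tangle and to check that the three claimed pictures literally match the standard $L_+$, $L_0$, $L_-$ templates; orientation consistency of the two strands at $c$ is what guarantees that deleting $e$ gives the oriented smoothing and not the disoriented one.
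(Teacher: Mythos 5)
The paper does not give its own proof of this lemma; it is imported from Galashin--Lam \cite{GLplabiclinks}, and the argument there is exactly the one you outline: after tail reduction the boundary leaf face is a bigon, and the local strand diagrams of $L_G^\plab$, $L_{G''}^\plab$, $L_{G'}^\plab$ are the $L_+$, $L_-$, $L_0$ of a single skein triple, so \eqref{blfskein} is just \eqref{Homflyskein}. Your proposal is a correct outline of that proof, with the part you defer (the over/under data at the crossing along $e$ and the check that deleting $\{x,y\}$ produces the crossing switch rather than some other tangle) being precisely the content of the local pictures in \cite{GLplabiclinks}.
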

This lemma will be a key tool in later proofs. We will also need the following well-known fact about the HOMFLY polynomial of a connected sum of two links. See \cite{lickorishintroknottheory} for a proof of this fact.
\begin{prop}
 Let $L$ and $L'$ be two oriented links, and let $L\#L'$ be a connected sum of these two links. Then $P(L\# L')=P(L)\cdot P(L')$.   
\end{prop}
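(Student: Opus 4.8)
The plan is to prove the identity by induction on the complexity of a diagram of $L$, using the defining skein relation (\ref{Homflyskein}) together with the observation that the band realizing a connected sum can be taken disjoint from every crossing, so any skein operation performed inside the $L$-summand commutes with forming the connected sum. First I would fix a diagram $D'$ of $L'$ and realize $L \# L'$ by a diagram $D \# D'$ in which $D'$ is spliced into a short, crossing-free arc of a chosen component of a diagram $D$ of $L$. I would then invoke the standard computation scheme for $P$: fix an ordering of the components of $L$ together with a basepoint on each, call a crossing of $D$ \emph{bad} if it must be switched to make $D$ descending, and run a double induction on the pair (number of crossings of $D$, number of bad crossings of $D$), ordered lexicographically. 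This is exactly the well-founded ordering underlying the construction of the HOMFLY polynomial, which I take as known.

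For the base case, when $D$ has no bad crossings it is descending, so $L$ is a $k$-component unlink $U_k$. Since connected sum with an unknot is the identity, $L \# L'$ is then the split union $L' \sqcup U_{k-1}$. I would first record, as a one-crossing consequence of the skein relation, that adjoining a split unknot multiplies $P$ by $\delta = (a - a^{-1})/z$, whence $P(U_k) = \delta^{\,k-1}$ and $P(L' \sqcup U_{k-1}) = \delta^{\,k-1} P(L')$. This gives $P(L \# L') = \delta^{\,k-1} P(L') = P(U_k)\,P(L') = P(L)\,P(L')$, establishing the base case.

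For the inductive step, I would choose a bad crossing $c$ of $D$. Because the connecting band avoids all crossings, $c$ is a crossing of $D \# D'$ lying entirely in the $L$-summand, and both its switch and its smoothing preserve the connected-sum structure, yielding $D_{\mathrm{sw}} \# D'$ and $D_0 \# D'$. Applying the skein relation to $D \# D'$ at $c$, the smoothed term has strictly fewer crossings and the switched term has strictly fewer bad crossings, so the induction hypothesis applies to both and lets me replace $P(\,\cdot\,\# D')$ by $P(\,\cdot\,)\,P(L')$. Factoring out the common $P(L')$ and comparing the resulting relation with the skein relation applied to $D$ itself at $c$ gives $P(L \# L') = P(L)\,P(L')$, completing the induction.

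I expect the genuine difficulty to lie not in the algebra, which collapses to a short factorization once the two skein relations are aligned, but in the bookkeeping that makes the argument rigorous: verifying that switching or smoothing $c$ in $D \# D'$ corresponds \emph{exactly} to the same move on $D$ (which is where disjointness of the connecting band from the local tangle is essential), and arranging the descending-diagram scheme so that the lexicographic induction is genuinely well-founded with the unlink as its terminal case. Setting up this framework carefully is the main obstacle; once it is in place the inductive computation is routine.
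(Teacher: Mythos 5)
The paper does not prove this proposition at all: it records it as a well-known fact and simply cites Lickorish's \emph{An Introduction to Knot Theory} for the proof. Your argument is essentially the standard proof found in that reference --- lexicographic induction on (crossing number, number of bad crossings) via the descending-diagram scheme, with the split-unknot factor $\delta=(a-a^{-1})/z$ handling the unlink base case and the locality of the skein move in the $L$-summand driving the inductive step --- and it is correct as outlined.
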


\section{The HOMFLY Polynomial of a Forest Quiver}

\subsection{Defining the HOMFLY polynomial of a forest quiver}

\begin{defn}\label{def: HOMFLY of forest}
    Let $Q$ be a quiver whose underlying graph is a forest. The \emph{HOMFLY polynomial of $Q$}, denoted $f(Q)$, is defined recursively by setting
    \begin{itemize}
        \item $f(Q)=1$ if $Q$ is empty,
        \item $f(Q) = \frac{z+z^{-1}}{a}-\frac{z^{-1}}{a^3}$ if $Q$ is a single vertex,
        \item $f(Q) = \frac{z}{a}f(Q-\{v\})+\frac{1}{a^2}f(Q-\{v,\tilde{v}\})$ if $v$ is a leaf in $Q$ which is adjacent to $\tilde{v}$, and
        \item $f(Q) = f(Q_1)\cdot f(Q_2)$ if $Q=Q_1 \sqcup Q_2$.
    \end{itemize}

\end{defn}
\begin{rem}
    Since the definition does not depend on the orientation of the edges in $Q$, we may occasionally write $f(Q)$ where $Q$ is an undirected forest.
\end{rem}

\begin{prop}\label{prop: HOMFLY well-defined}
    The function $f$ is well-defined.
\end{prop}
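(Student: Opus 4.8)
The plan is to prove well-definedness by strong induction on the number of vertices $n=|Q|$, and to carry along the \emph{auxiliary} claim that $f(Q)$ factors as $\prod_{C}f(C)$ over the connected components $C$ of $Q$. There are three sources of ambiguity in the recursion: the choice of leaf $v$ in the leaf rule, the choice of splitting $Q=Q_1\sqcup Q_2$ in the disjoint-union rule, and the option of applying a leaf rule \emph{instead of} a split when $Q$ is disconnected. The base cases $n=0$ and $n=1$ are immediate, since only one rule applies (an isolated vertex has no neighbour $\tilde v$, so the leaf rule is inapplicable). For the inductive step I assume the full statement, including the factorization, for every forest with fewer than $n$ vertices.

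First I would dispose of all disjoint-union ambiguities. If $Q$ is disconnected, then for any nontrivial split $Q=Q_1\sqcup Q_2$ both parts have fewer than $n$ vertices, so by induction each $f(Q_i)$ is well-defined and equals the product of $f$ over the components inside $Q_i$; hence $f(Q_1)f(Q_2)=\prod_C f(C)$ for \emph{every} split, and all splits agree. Next, if one instead applies the leaf rule at a leaf $v$ lying in a single component $C_0$, the two child forests $Q-\{v\}$ and $Q-\{v,\tilde v\}$ still contain the other components unchanged; applying the (smaller, hence valid) factorization lets me pull out the common factor $\prod_{C\neq C_0}f(C)$, so the leaf-rule value becomes $\bigl(\tfrac{z}{a}f(C_0-\{v\})+\tfrac{1}{a^2}f(C_0-\{v,\tilde v\})\bigr)\cdot\prod_{C\neq C_0}f(C)$, which equals $\prod_C f(C)$ because $C_0$ is smaller than $Q$ and well-defined by induction. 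Thus on a disconnected forest every admissible first step yields $\prod_C f(C)$, and the whole problem reduces to the connected case.

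The heart of the argument is leaf-independence on a tree $T$ with $n\ge 2$ vertices, where the disjoint-union rule does not apply nontrivially. Given two leaves $v,w$, I would expand $f(T)$ via $v$ and via $w$ and match the results by a confluence (diamond) argument. For $n=2$ the two leaves are symmetric and both expansions collapse to $\tfrac{z}{a}f(\text{single vertex})+\tfrac{1}{a^2}$. For $n\ge 3$ the leaves are non-adjacent, so $w$ remains a leaf of $T-\{v\}$ and, in the generic case where $\tilde v\neq\tilde w$, also of $T-\{v,\tilde v\}$; using the induction hypothesis to re-expand both children along $w$ (and symmetrically re-expanding the $w$-first computation along $v$) produces in either order the manifestly symmetric expression
\[
\tfrac{z^2}{a^2}f(T-\{v,w\})+\tfrac{z}{a^3}f(T-\{v,\tilde v,w\})+\tfrac{z}{a^3}f(T-\{v,w,\tilde w\})+\tfrac{1}{a^4}f(T-\{v,\tilde v,w,\tilde w\}),
\]
each term being a forest on fewer than $n$ vertices and hence well-defined by induction. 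Since this quantity is invariant under swapping the roles of $(v,\tilde v)$ and $(w,\tilde w)$, the $v$-expansion and the $w$-expansion agree.

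The hard part will be the degenerate configurations of this confluence, and I expect them to be the main obstacle. The worst case is a \emph{shared neighbour} $\tilde v=\tilde w=:u$ (which is forced already for $n=3$): here deleting $u$ isolates the other leaf, so re-expanding $f(T-\{v,u\})$ along $w$ is illegal because $w$ is no longer a leaf. Instead I must invoke the product rule established in the previous step, writing $f(T-\{v,u\})=f(\text{single vertex }w)\cdot f(T-\{v,u,w\})$, and then use the single-vertex base value together with $f(\{v\})=f(\{w\})$ to see that the $v$- and $w$-expansions again coincide term by term. More generally, one must track that a chosen vertex remains a leaf after each deletion, and account for the fact that removing a high-degree neighbour disconnects $T$, pushing the recursion out of the tree setting into forests; this is exactly where the factorization carried through the induction, and the single-vertex evaluation, must be applied carefully. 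Once all such cases are checked, the induction closes and $f$ is well-defined.
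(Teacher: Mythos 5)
Your proposal is correct and follows essentially the same strategy as the paper: induction on the number of vertices, a diamond/confluence argument comparing two leaf expansions (with the same generic case and the same shared-neighbour degeneracy resolved by identifying the leftover isolated vertex), and a verification that the leaf rule is compatible with the product over components. The only difference is organizational — you establish the component factorization first and reduce to connected trees, whereas the paper treats the leaf-choice cases directly in the full forest and checks compatibility with the disjoint-union rule last — but the mathematical content is identical.
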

\begin{proof}

We will proceed by induction on the number $n$ of vertices in $Q$. The formula is well-defined for $n=0$; if $Q$ is empty, then $f(Q)=1$. If $n=1$, then $f(Q)$ must equal $\frac{z+z^{-1}}{a}-\frac{z^{-1}}{a^3}$. If $n \geq 2$, there will be multiple options for leaves which can be removed from $Q$, and if $Q=Q_1\sqcup Q_2$ where $Q_1$ and $Q_2$, one could alternatively compute $f(Q_1)\cdot f(Q_2)$ to obtain $f(Q)$. We will show first that if one chooses to remove a leaf $v$ instead of a leaf $u$, the result does not change. We will then show that if one chooses at some step to remove a leaf from $Q$ rather than take the product of $f$ over the connected components of $Q$, the result is the same.

 Assume that $f(Q')$ is well-defined for all forests $Q'$ with fewer than $n$ vertices. Let $u$ and $v$ be leaves of $Q$ adjacent to vertices $\tilde{u}$ and $\tilde{v}$, respectively. We will handle the proof that the choice to remove $u$ instead of $v$ first does not matter using a few cases:\\
 
 \textbf{Case 1:} Suppose $u$, $v$, $\tilde{u}$, and $\tilde{v}$ are all distinct. By the definition of $f$, 
\begin{equation*}
    f(Q) = \frac{z}{a}f(Q-\{u\}) +\frac{1}{a^2} f(Q-\{u,\tilde{u}\}).
\end{equation*}
In $Q-\{u\}$ and $Q-\{u,\tilde{u}\}$ the leaf $v$, its adjacent vertex $\tilde{v}$, and the edge between them are left untouched, so $v$ remains a leaf. Thus, since $Q-\{u\}$ and $Q-\{u,\tilde{u}\}$ have fewer than $n$ vertices, $f(Q-\{u\})$ and $f(Q-\{u,\tilde{u}\})$ are well-defined and may be computed recursively by first removing the leaf $v$. This yields 
\begin{align*}
    f(Q) &= \frac{z}{a}f(Q-\{u\}) +\frac{1}{a^2} f(Q-\{u,\tilde{u}\})\\
    &= \frac{z}{a}\left(\frac{z}{a}f(Q-\{u,v\}) +\frac{1}{a^2}f(Q-\{u,v,\tilde{v}\}) \right)+\frac{1}{a^2} \left( \frac{z}{a}f(Q-\{u,\tilde{u},v\}) +\frac{1}{a^2}f(Q-\{u,\tilde{u},v,\tilde{v}\})\right)\\
    &= \frac{z^2}{a^2}f(Q-\{u,v\}) +\frac{z}{a^3}f(Q-\{u,v,\tilde{v}\}) + \frac{z}{a^3}f(Q-\{u,\tilde{u},v\}) +\frac{1}{a^4}f(Q-\{u,\tilde{u},v,\tilde{v}\}).
\end{align*}
By the symmetry of the roles of $u$ and $v$ in this case, it follows that when one first removes $v$ and then $u$, the recursion yields
\begin{align*}
    f(Q) &= \frac{z}{a}\left(\frac{z}{a}f(Q-\{v,u\}) +\frac{1}{a^2}f(Q-\{v,u,\tilde{u}\}) \right)+\frac{1}{a^2} \left( \frac{z}{a}f(Q-\{v,\tilde{v},u\}) +\frac{1}{a^2}f(Q-\{v,\tilde{v},u,\tilde{u}\})\right)\\
    &=\frac{z^2}{a^2}f(Q-\{u,v\}) +\frac{z}{a^3}f(Q-\{u,\tilde{u},v\}) + \frac{z}{a^3}f(Q-\{u,v,\tilde{v}\}) +\frac{1}{a^4}f(Q-\{u,\tilde{u},v,\tilde{v}\})
\end{align*}
and we see that two results coincide. \\

\textbf{Case 2:} Suppose that $\tilde{u}=\tilde{v}$ so that $u$ and $v$ are leaves incident to the same vertex $\tilde{u}$. In this case, if we first remove the leaf $u$, we see that $\tilde{u}$ and $v$ are unaffected in $Q-\{u\}$ so we can remove the leaf $v$ to this graph to compute $f(Q-\{u\})$. However, $v$ is now an isolated vertex in $Q-\{u,\tilde{u}\}$, so we cannot compute $f(Q-\{u,\tilde{u}\})$ in the same way. Therefore, we obtain
\begin{align*}
    f(Q) &= \frac{z}{a}f(Q-\{u\}) + \frac{1}{a^2}f(Q-\{u,\tilde{u}\}) \\
    &=\frac{z}{a}\left(\frac{z}{a}f(Q-\{u,v\}) +\frac{1}{a^2}f(Q-\{u,v,\tilde{u}\}) \right) + \frac{1}{a^2}f(Q-\{u, \tilde{u}\})\\
    &=\frac{z^2}{a^2}f(Q-\{u,v\}) +\frac{z}{a^3}f(Q-\{u,v,\tilde{u}\})  + \frac{1}{a^2}f(Q-\{u, \tilde{u}\}).
\end{align*}
If we had instead removed $v$ first, then the recursive formula for $f$ yields
\begin{align*}
    f(Q) 
    &=\frac{z^2}{a^2}f(Q-\{u,v\}) +\frac{z}{a^3}f(Q-\{u,v,\tilde{u}\})  + \frac{1}{a^2}f(Q-\{v,\tilde{u}\}).
\end{align*}
These two formulas differ only in that one contains the term $\frac{1}{a^2}f(Q-\{u,\tilde{u}\})$ while the other contains the term $\frac{1}{a^2}f(Q-\{v,\tilde{u}\})$. However, $Q-\{u,\tilde{u}\}$ and $Q-\{v,\tilde{u}\}$ are both isomorphic to the union of $Q-\{u,\tilde{u}\}$ and an isolated vertex. It follows that $f(Q-\{u, \tilde{u}\})=f(Q-\{v, \tilde{u}\})$.\\

\textbf{Case 3:} Suppose $u=\tilde{v}$ and $v=\tilde{u}$ so that the component of $Q$ containing these vertices consists of a single edge connecting two vertices. Then when first removing $u$ to obtain
$$f(Q) = \frac{z}{a}f(Q-\{u\}) + \frac{1}{a^2}f(Q-\{u,\tilde{u}\})$$
we see that $Q-\{u\}$ consists an isolated vertex and the other connected components of $Q$ which do not contain $u$ or $v$. Similarly, $Q-\{u,\tilde{u}\}$ consists of solely the other connected components of $Q$. The same holds for $Q-\{v\}$ and $Q-\{v,\tilde{v}\}$, so the two results for $f(Q)$ coincide whether $u$ or $v$ is removed first.

Finally, we now show that if $Q$ is disconnected one can either compute $f(Q)$ first by removing a leaf or by computing the product $f(Q_1)\cdot f(Q_2)$ if $Q=Q_1 \sqcup Q_2$ where $Q_1$ and $Q_2$ are non-empty forest quivers. Pick a leaf $v$ in $Q$ which is adjacent to a vertex $\tilde{v}$. By the above, the leaf can be arbitrary. Suppose without loss of generality that $v$ is in $Q_1$. If we choose to first remove $v$ to compute $f(Q)$, the recursion yields
\begin{align*}
    f(Q) &= \frac{z}{a}f(Q-\{v\}) + \frac{1}{a^2}f(Q-\{v,\tilde{v}\})\\
    &= \frac{z}{a}f((Q_1-\{v\}) \sqcup Q_2) + \frac{1}{a^2}f((Q_1-\{v,\tilde{v}\})\sqcup Q_2)\\
    &= \frac{z}{a}f(Q_1-\{v\})\cdot f(Q_2) + \frac{1}{a^2}f(Q_1-\{v,\tilde{v}\})\cdot f(Q_2)\\
    &= \left( \frac{z}{a}f(Q_1-\{v\}) + \frac{1}{a^2}f(Q_1-\{v,\tilde{v}\}) \right)\cdot f(Q_2)\\
    &= f(Q_1)\cdot f(Q_2).
\end{align*}
\end{proof}

\begin{exmp}\label{ex: star HOMFLY}
  Fix $n \geq 4$. Let $S_n$ be the star graph on $n$ vertices which has one vertex of degree $n-1$ connected to $n-1$ leaves.
    When $n=4$, the HOMFLY polynomial of $S_4=D_4$ is
    \begin{align*}
        P(S_4) &= \frac{z}{a}P(A_3)+\frac{1}{a^2}\left(P(A_1) \right)^2 \\
        &=\frac{z^2}{a^2}P(A_2)+ \frac{z}{a^3}P(A_1)+\frac{1}{a^2}\left(P(A_1) \right)^2 \\
        &= \frac{z^2}{a^4} +\frac{z^3}{a^3}P(A_1) + \frac{z}{a^3}P(A_1)+\frac{1}{a^2}\left(P(A_1) \right)^2\\
         &= \frac{z^2}{a^4} +\frac{z^3+z}{a^3}\left( \frac{z+z^{-1}}{a}-\frac{z^{-1}}{a^3}\right)+\frac{1}{a^2}\left(\left( \frac{z+z^{-1}}{a}-\frac{z^{-1}}{a^3}\right) \right)^2\\
         &= \frac{z^4 + 4z^2 + 3+z^{-2}}{a^4} - \frac{z^2  + 3+2z^{-2}}{a^6} + \frac{z^{-2}}{a^8}.
    \end{align*}
    Using this, one can also compute the HOMFLY polynomial of $S_n$ for $n > 4$:
    \begin{align*}
        P(S_n) &= \frac{z}{a}P(S_{n-1})+\frac{1}{a^2}\left(P(A_1)\right)^{n-2}\\
        &= \frac{z^2}{a^2}P(S_{n-2})+\frac{z}{a^3}\left(P(A_1)\right)^{n-3}+\frac{1}{a^2}\left(P(A_1)\right)^{n-2}\\
        &= \frac{z^3}{a^3}P(S_{n-3})+\frac{z^2}{a^4}\left(P(A_1)\right)^{n-4}+\frac{z}{a^3}\left(P(A_1)\right)^{n-3}+\frac{1}{a^2}\left(P(A_1)\right)^{n-2}\\
        &\quad\vdots \\
        & = \frac{z^{n-4}}{a^{n-4}}P(S_4) + \frac{z^{n-5}}{a^{n-3}}\left(P(A_1)\right)^3 + \dots +\frac{z}{a^3}\left(P(A_1)\right)^{n-3}+\frac{1}{a^2}\left(P(A_1)\right)^{n-2}\\
        &=\frac{z^{n-2}}{a^n} + \frac{z^{n-1}+z^{n-3}}{a^{n-1}}P(A_1)+\frac{z^{n-4}}{a^{n-2}}\left(P(A_1) \right)^2+\frac{z^{n-5}}{a^{n-3}}\left(P(A_1)\right)^3 + \dots \\
        &\qquad+\frac{z}{a^3}\left(P(A_1)\right)^{n-3}+\frac{1}{a^2}\left(P(A_1)\right)^{n-2}\\
        &=\frac{z^{n-2}}{a^n} + \frac{z^{n-1}+z^{n-3}}{a^{n-1}}P(A_1) + \sum_{k=2}^{n-2}\left( \frac{z^{k-2}}{a^{k}}\left(P(A_1) \right)^{n-k}\right)\\
        &=\frac{z^{n-2}}{a^n} + \frac{z^{n-1}+z^{n-3}}{a^{n-1}}\left( \frac{z+z^{-1}}{a}-\frac{z^{-1}}{a^3}\right) + \sum_{k=2}^{n-2}\left( \frac{z^{k-2}}{a^{k}}\left( \frac{z+z^{-1}}{a}-\frac{z^{-1}}{a^3}\right)^{n-k}\right).
    \end{align*}  
\end{exmp}

The Alexander polynomial $\Delta(Q)$ of a forest quiver $Q$ can be obtained from $f(Q)$ via the substitution $a=1$ and $z=t^{1/2}-t^{-1/2}$, as in the knot theory sense. We obtain some simple formulas for the Alexander polynomials of type $A_n$ and $D_n$ quivers.

\begin{exmp}
    We will consider the Alexander polynomial of a type $A_n$ quiver. Since $f(A_1) = \frac{z+z^{-1}}{a}-\frac{z^{-1}}{a^3}$, we find $\Delta(A_1) = ((t^{1/2}-t^{-1/2})+(t^{1/2}-t^{-1/2})^{-1}) - (t^{1/2}-t^{-1/2})^{-1} = t^{1/2}-t^{-1/2}$. It follows that 
    \begin{align*}
        \Delta(A_2) &= (t^{1/2}-t^{-1/2})\Delta(A_1) + \Delta(A_0)\\
        &= (t^{1/2}-t^{-1/2})(t^{1/2}-t^{-1/2})+1\\
        &= t-1+t^{-1}.
    \end{align*}
    This pattern holds for higher $n$ as well; one can prove by induction that $$\Delta(A_n) = t^{-n/2}\cdot \sum\limits_{k=0}^n (-1)^{n-k} t^k.$$
\end{exmp}

\begin{exmp}
    The Alexander polynomial of the $D_4$ quiver is
    \begin{align*}
        \Delta(D_4) &= (t^{1/2}-t^{-1/2})\Delta(A_3) + \Delta(A_1)^2\\
        &= (t^{1/2}-t^{-1/2}) (t^{3/2} - t^{1/2} +t^{-1/2} -t^{-3/2}) + (t-2+t^{-1})\\
        &= t^2-2t+2-2t^{-1}+t^{-2} + (t-2+t^{-1})\\
        &= t^2-t-t^{-1}+t^{-2}.
    \end{align*}
    Similarly,
    \begin{align*}
        \Delta(D_5) &= (t^{1/2}-t^{-1/2})\Delta(D_4)+\Delta(A_3)\\
        &= (t^{1/2}-t^{-1/2})(t^2-t-t^{-1}+t^{-2})+t^{3/2} - t^{1/2} +t^{-1/2} -t^{-3/2}\\
        &= t^{5/2}-t^{3/2}+t^{-3/2}-t^{-5/2}.
    \end{align*}
   Using these base cases, one can prove via induction that $$\Delta(D_n) = t^{-n/2}\left(t^n-t^{n-1} + (-1)^{n-1}t+(-1)^n \right).$$
\end{exmp}

\subsection{Connections to the HOMFLY polynomial of a plabic link}

The main result of this section is that the HOMFLY polynomial of a forest quiver $Q$ is equal to the HOMFLY polynomial of the plabic link associated to any connected plabic graph whose quiver is $Q$. Thus, Definition \ref{def: HOMFLY of forest} gives a way to go directly from a forest quiver to a corresponding link invariant. 

\begin{rem}
 If the plabic graphs are not assumed to be connected, it is possible to have two plabic graphs with the same quiver but where the HOMFLY polynomials of the associated plabic links are different. See Figure \ref{fig: disconnected graphs} for an example of two plabic graphs with the same quiver but whose plabic links have different HOMFLY polynomials. The plabic link of $G_1$ is a disjoint union of two positive Hopf links, and $$P(L_{G_1}^\plab) = \left(\frac{a-a^{-1}}{z}\right)\left( \frac{z+z^{-1}}{a}-\frac{z^{-1}}{a^3}\right)^2.$$ On the other hand, $G_2$ is a connected sum of two positive Hopf links, so $$P(L_{G_2}^\plab) = \left( \frac{z+z^{-1}}{a}-\frac{z^{-1}}{a^3}\right)^2.$$

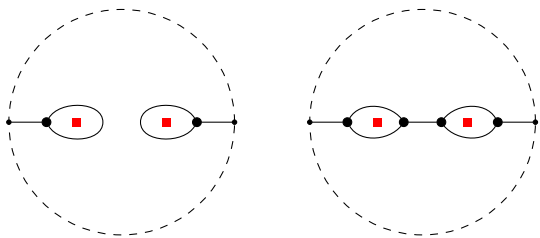
\begin{figure}
    \centering
    \begin{tikzpicture}
    \begin{scope}
         \draw[dashed] (0,0) circle (1.5);
         \node[scale =0.3, circle, draw=black, fill=black] (l) at (-1,0) {};
         \node[scale =0.3, circle, draw=black, fill=black] (r) at (1,0) {};

         \node[scale =0.15, circle, draw=black, fill=black] (bl) at (-1.5,0) {};
         \node[scale =0.15, circle, draw=black, fill=black] (br) at (1.5,0) {};

         \draw (bl) -- (l);
         \draw (br) -- (r);

         \draw (l) to [out = 60, in = 90] (-0.25, 0) to [out=270, in = 300] (l);
         \draw (r) to [out = 120, in = 90] (0.25, 0) to [out=270, in = 240] (r);

         \node[fill=red, scale = 0.4] () at (-0.6,0) {};
         \node[fill=red, scale = 0.4] () at (0.6,0) {};
    \end{scope}
    \begin{scope}[xshift=4cm]
         \draw[dashed] (0,0) circle (1.5);
         \node[scale =0.3, circle, draw=black, fill=black] (l) at (-1,0) {};
         \node[scale =0.3, circle, draw=black, fill=black] (r) at (1,0) {};

         \node[scale =0.15, circle, draw=black, fill=black] (bl) at (-1.5,0) {};
         \node[scale =0.15, circle, draw=black, fill=black] (br) at (1.5,0) {};

         \draw (bl) -- (l);
         \draw (br) -- (r);

         \draw (l) to [out = 60, in = 120] (-0.25, 0) to [out=240, in = 300] (l);
         \draw (r) to [out = 120, in = 60] (0.25, 0) to [out=300, in = 240] (r);

         \node[fill=red, scale = 0.4] () at (-0.6,0) {};
         \node[fill=red, scale = 0.4] () at (0.6,0) {};

         \node[scale =0.3, circle, draw=black, fill=black] (ml) at (-0.25,0) {};
         \node[scale =0.3, circle, draw=black, fill=black] (mr) at (0.25,0) {};

         \draw (ml) -- (mr);
    \end{scope}
       
    \end{tikzpicture}
    \caption{Two plabic graphs with the same quiver but whose plabic links have different HOMFLY polynomials.}
    \label{fig: disconnected graphs}
\end{figure}   
\end{rem}

In order to prove this main result, we will consider operations on plabic graphs and links which will be analogous to taking the product of the HOMFLY polynomial of connected components of a quiver or removing leaves from a quiver. Leaf removal will correspond to the skein relation (\ref{blfskein}) applied to boundary leaf faces. Meanwhile, a taking a product over connected components of a quiver will correspond to taking a connected sum of links. In order to prove the latter correspondence, we first fix some terminology. 

Suppose that $G$ is a plabic graph with an edge $e$ such that two (not necessarily distinct) boundary faces $B$ and $B'$ lie on either side of $e$. We will refer to such an edge as a \emph{dividing edge}. Then one can draw a line from the boundary of the disk through $B$, across $e$, and through $B'$ back to the boundary of the disk which divides $G$ as pictured in Figure \ref{fig: dividing edge}.  Let $G_1$ (resp. $G_2$) be the induced subgraph on all vertices to the left (resp. right) of this dividing line.

\begin{figure}
    \centering
    \begin{tikzpicture}
        \begin{scope}
            \draw[dashed] (0,0) circle (1.5);
            \node [circle, scale = 0.4, fill=black, draw=black] (v1) at (-0.5, 0) {};
            \node [circle, scale = 0.4, fill=black, draw=black] (v2) at (0.5, 0) {};

            \node (1) at ($(v1)+(120:0.75)$) {};
            \node (2) at ($(v1)+(180:0.75)$) {};
            \node (3) at ($(v1)+(240:0.75)$) {};

            \node (4) at ($(v2)+(60:0.75)$) {};
            \node (5) at ($(v2)+(20:0.75)$) {};
            \node (6) at ($(v2)+(340:0.75)$) {};
            \node (7) at ($(v2)+(300:0.75)$) {};

            \draw (v1) -- (v2);
            \draw (v1) -- (1);
            \draw (v1) -- (2);
            \draw (v1) -- (3);
            \draw (v2) -- (4);
            \draw (v2) -- (5);
            \draw (v2) -- (6);
            \draw (v2) -- (7);

            \draw[dashed, blue, thick] ($(90:1.5)+(0.1,0)$) -- ($(270:1.5)+(0.1,0)$);

            \node (b) at (-0.16, 0.3) {\scriptsize{$B$}};
            \node (b') at (-0.16, -0.3) {\scriptsize{$B'$}};

            \node (g1) at (-0.4, 1) {\scriptsize{$G_1$}};
            \node (g2) at (0.5, 1) {\scriptsize{$G_2$}};
        \end{scope}
    \end{tikzpicture}
    \caption{If there is an edge $e$ in $G$ which separates two (not necessarily distinct) boundary faces $B$ and $B'$, one can divide $G$ by drawing a line through these boundary faces across $e$, pictured here in blue. If one removes $e$, the result can be considered as the disjoint union of two smaller plabic graphs $G_1$ and $G_2$ on the left and right respectively of this dividing line.}
    \label{fig: dividing edge}
\end{figure}
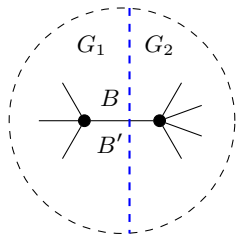

\begin{lem}\label{lem: dividing edge}
    Suppose $G$ be a plabic graph with a dividing edge $e$, and let $G_1$ and $G_2$ be as defined above. Then $L_G^\plab$ is a connected sum of $L_{G_1}^\plab$ and $L_{G_2}^\plab$.
\end{lem}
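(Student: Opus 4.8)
The plan is to exhibit an explicit sphere meeting the diagram $L_G^\plab$ in exactly two points that realizes the connected sum decomposition.

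First I would pin down the combinatorics of the strands crossing the dividing line $\ell$. By the rules of the road, the strands traverse each directed edge of $G$ exactly once, so each undirected edge is used by exactly two strand segments, running antiparallel. In particular $e$ carries exactly two such segments $s$ and $s'$, oriented from the $G_1$-side into the $G_2$-side and from the $G_2$-side into the $G_1$-side respectively. Because $B$ and $B'$ are boundary faces, I can route $\ell$ from the boundary of the disk straight across $e$ and back out to the boundary, staying in the interiors of $B$ and $B'$ and hugging $e$ closely enough to avoid every corner-cutting strand arc near the other vertices of $B$ and $B'$. Then $\ell$ meets $L_G^\plab$ transversally in exactly the two points $p\in s$ and $p'\in s'$. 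Closing $\ell$ up by an arc in the exterior of the disk (where nothing happens) produces a circle $C$ bounding the subdisk $D_1$ containing $G_1$ and the subdisk $D_2$ containing $G_2$, and meeting the link in exactly two points.

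Next I would record the connected-sum structure formally. Let $T_1=L_G^\plab\cap D_1$ and $T_2=L_G^\plab\cap D_2$; each is a tangle with its two endpoints at $p,p'$. Capping off $T_i$ by a trivial arc along $C$ yields a link, and by the standard characterization of connected sums along a two-point sphere, $L_G^\plab$ is the connected sum of these two capped links. It then remains to identify the cap of $T_1$ with $L_{G_1}^\plab$ and the cap of $T_2$ with $L_{G_2}^\plab$. For this I would compare the diagrams strand by strand. Every strand of $G$ lying entirely in $D_1$ is literally a strand of the induced subgraph $G_1$, and its crossings are governed by the same local rule, so these portions agree on the two links. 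The only discrepancy occurs at $e$: capping $T_1$ joins the $G_1$-ends of $s$ and $s'$. At the vertex $v_1$ the rules of the road connect the two orientations of $e$ to the two edges neighboring $e$ in the cyclic order at $v_1$; deleting $e$ to form $G_1$ lowers the degree of $v_1$ by one and makes those two neighbors adjacent, so in $G_1$ a single strand passes directly between them. One checks that the capping arc is isotopic to exactly this reconnection (the small finger running out to $C$ and back retracts) and that smoothing the resulting degree-two vertex $v_1$ via local move (c) does not change the link. The symmetric statement holds for $T_2$, $v_2$, and $G_2$, giving $L_G^\plab=L_{G_1}^\plab\# L_{G_2}^\plab$.

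I expect the main obstacle to be the two places where the global nature of the plabic-link construction intrudes: first, verifying that $\ell$ can genuinely be drawn to miss all corner-cutting strand arcs in the boundary faces $B$ and $B'$, so that the intersection count is exactly two; and second, checking that the over/under crossing data and, crucially, the argument-$0$ boundary modifications of Figure~\ref{fig:arg0} computed for $G_1$ in its own disk coincide, up to isotopy, with those inherited from $G$. Both should follow from the locality of each crossing and of each argument-$0$ adjustment, together with the freedom to isotope, but this is the step that will need the most care.
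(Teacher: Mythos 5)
Your argument is correct and is in substance the same as the paper's: both hinge on the observation that exactly two antiparallel strand segments of $L_G^\plab$ traverse the dividing edge $e$, and that splitting the link there and capping/reconnecting reproduces precisely the strands of the induced subgraphs $G_1$ and $G_2$ (with the little finger at each endpoint of $e$ retracting to the rerouted strand at the degree-lowered vertex). The paper runs the same local analysis in the opposite direction --- it deletes $e$ to obtain $L_{G_1}^\plab \sqcup L_{G_2}^\plab$, forms the connected sum, and exhibits an explicit isotopy (flipping the $L_{G_1}^\plab$ factor) to $L_G^\plab$ for each of the three colorings of the endpoints of $e$ --- and the verifications you flag as delicate (transversality in exactly two points, compatibility of the crossing and argument-$0$ data) are exactly what Figure \ref{fig: connected sums dividing edge} checks pictorially.
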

\begin{proof}
    See Figure \ref{fig: connected sums dividing edge} for what $G$ and $L_{G}^\plab$ look like locally around $e$. The three different rows correspond to the possibilities for the colors of the endpoints of $e$. If one were to delete $e$ the resulting link would be $L_{G_1}^\plab \sqcup L_{G_2}^\plab$, as pictured in the left hand column of Figure \ref{fig: connected sums dividing edge}. The middle column shows the connected sum of these two links. In the last row, the link $L_{G_1}^\plab$ has been flipped vertically to make the connected sum more evident. If one then flips the portion of this connected sum which comes from $L_{G_1}^\plab$ twice in the first two rows and once in the last row so that the top portion begins by coming out of the page, it follows that $L_{G_1}^\plab \# L_{G_2}^\plab$ is isotopic to $L_{G}^\plab$.

    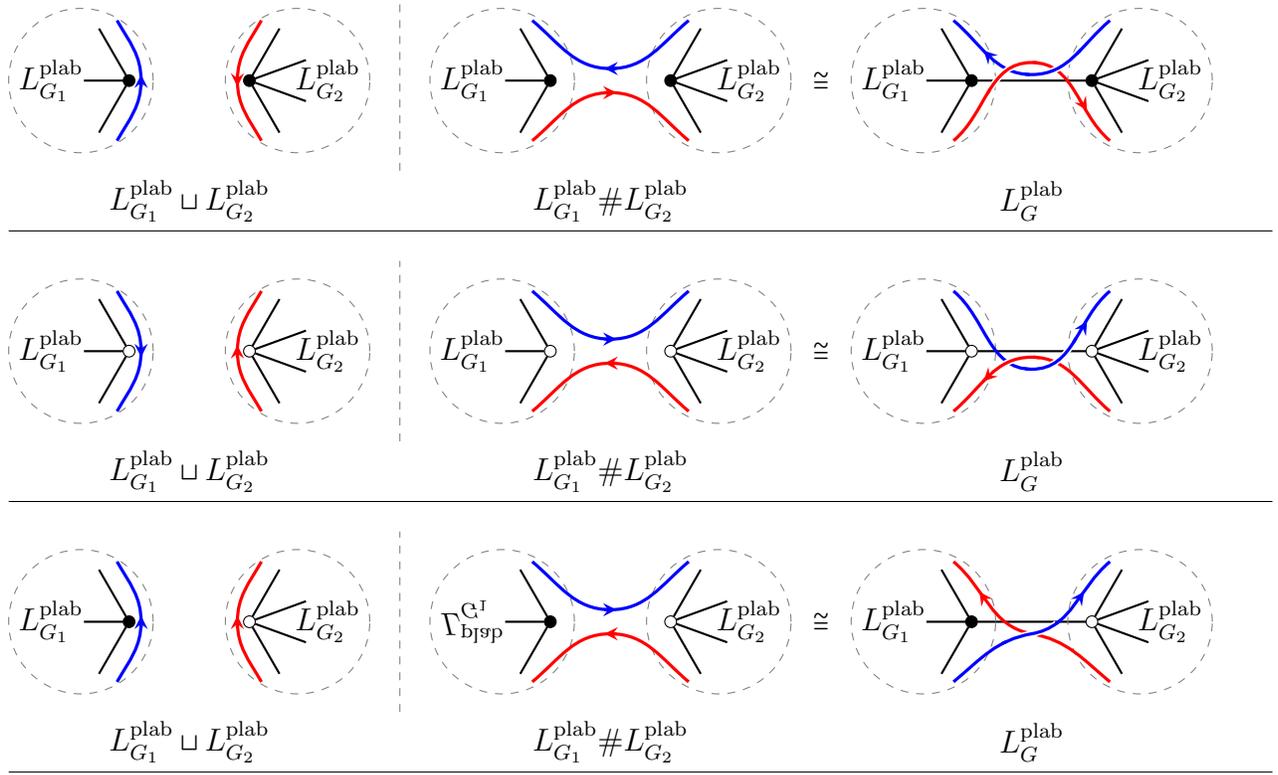
\begin{figure}
        \centering
        \begin{tikzpicture}[scale=0.8]
        \begin{scope}
            \begin{scope}
                 \node[circle, fill=black, draw=black, scale = 0.4] (v1) at (-1,0) {};
                \node[circle, fill=black, draw=black, scale = 0.4] (v2) at (1,0) {};
                \draw[thick] ($(120:1)+(-1,0)$) -- (v1);
                \draw[thick] ($(180:1)+(-0.75,0)$) -- (v1);
                \draw[thick] ($(240:1)+(-1,0)$) -- (v1);
                
                \draw[thick] ($(60:1)+(1,0)$) -- (v2);
                \draw[thick] ($(20:1)+(1,0)$) -- (v2);
                \draw[thick] ($(340:1)+(1,0)$) -- (v2);
                \draw[thick] ($(300:1)+(1,0)$) -- (v2);
                \begin{knot}[clip width = 5]
                    \strand[very thick, blue] (-1.2,-1) to [out=60, in = 270] (-0.8, 0) to [out=90, in = 300] (-1.2,1);
                    \strand[very thick, red] (1.2,-1) to [out=120, in = 270] (0.8, 0) to [out=90, in = 240] (1.2,1);
                \end{knot}
                \draw[blue, very thick, -stealth] (-0.8, -0.1) -- (-0.8, 0.1);
                \draw[red, very thick, -stealth] (0.8, 0.1) -- (0.8, -0.1);
                \draw[gray, dashed] (-1.8, 0) circle (1.2);
                \node at (-2.3,0) {$L_{G_1}^\plab$};
                \draw[gray, dashed] (1.8, 0) circle (1.2);
                \node at (2.3,0) {$L_{G_2}^\plab$};
             \node at (0,-2) {$L_{G_1}^\plab \sqcup L_{G_2}^\plab$};
             \draw[gray, dashed] (3.5, -1.5) -- (3.5,1.5);
            \end{scope}
            \begin{scope}[xshift=7cm]
                \begin{scope}
                 \node[circle, fill=black, draw=black, scale = 0.4] (v1) at (-1,0) {};
                \node[circle, fill=black, draw=black, scale = 0.4] (v2) at (1,0) {};
                \draw[thick] ($(120:1)+(-1,0)$) -- (v1);
                \draw[thick] ($(180:1)+(-0.75,0)$) -- (v1);
                \draw[thick] ($(240:1)+(-1,0)$) -- (v1);
                
                \draw[thick] ($(60:1)+(1,0)$) -- (v2);
                \draw[thick] ($(20:1)+(1,0)$) -- (v2);
                \draw[thick] ($(340:1)+(1,0)$) -- (v2);
                \draw[thick] ($(300:1)+(1,0)$) -- (v2);
                \draw[gray, dashed] (-1.8, 0) circle (1.2);
                \draw[gray, dashed] (1.8, 0) circle (1.2);
                \node at (-2.3,0)  {$L_{G_1}^\plab$};
                \begin{knot}[clip width = 2]
                    \strand[very thick, blue] (-1.3,1) to [out=320, in = 180] (0, 0.2) to [out=0, in = 220] (1.3,1);
                    \strand[very thick, red] (-1.3,-1) to [out=40, in = 180] (0, -0.2) to [out=0, in = 140] (1.3,-1);
                \end{knot}
                \draw[blue, very thick, stealth-] (-0.1, 0.2) -- (0.1, 0.2);
                \draw[red, very thick, -stealth] (-0.1, -0.2) -- (0.1, -0.2);
                \node at (2.3,0) 
                {$L_{G_2}^\plab$};
             \node at (0,-2) {$L_{G_1}^\plab \# L_{G_2}^\plab$};
             \node at (3.5, 0) {$\cong$};
            \end{scope}
            \begin{scope}[xshift=7cm]
                 \node[circle, fill=black, draw=black, scale = 0.4] (v1) at (-1,0) {};
                \node[circle, fill=black, draw=black, scale = 0.4] (v2) at (1,0) {};
                \draw[thick] ($(120:1)+(-1,0)$) -- (v1);
                \draw[thick] ($(180:1)+(-0.75,0)$) -- (v1);
                \draw[thick] ($(240:1)+(-1,0)$) -- (v1);
                
                \draw[thick] ($(60:1)+(1,0)$) -- (v2);
                \draw[thick] ($(20:1)+(1,0)$) -- (v2);
                \draw[thick] ($(340:1)+(1,0)$) -- (v2);
                \draw[thick] ($(300:1)+(1,0)$) -- (v2);
                \draw[thick] (v1) -- (v2);
                \draw[gray, dashed] (-1.8, 0) circle (1.2);
                \draw[gray, dashed] (1.8, 0) circle (1.2);
                \node at (-2.3,0)  {$L_{G_1}^\plab$};
                \begin{knot}[clip width = 2, flip crossing = 1]
                    \strand[very thick, blue] (-1.3,1) to [out=320, in = 180] (0, 0.1) to [out=0, in = 220] (1.3,1);
                    \strand[very thick, red] (1.3,-1) to [out=140, in = 360] (0, 0.3) to [out=180, in = 40] (-1.3,-1);
                \end{knot}
                \draw[blue, very thick, -stealth] (-0.7, 0.4) -- (-0.8, 0.5);
                \draw[red, very thick, -stealth] (0.86, -0.4) -- (0.92, -0.5);
                \node at (2.3,0) {$L_{G_2}^\plab$};
             \node at (0,-2) {$L_{G}^\plab$};
            \end{scope}
            \end{scope}
        \draw (-3, -2.5) -- (18,-2.5);
        \end{scope}
        
         \begin{scope}[yshift = -4.5cm]
            \begin{scope}
                 \node[circle, fill=white, draw=black, scale = 0.4] (v1) at (-1,0) {};
                \node[circle, fill=white, draw=black, scale = 0.4] (v2) at (1,0) {};
                \draw[thick] ($(120:1)+(-1,0)$) -- (v1);
                \draw[thick] ($(180:1)+(-0.75,0)$) -- (v1);
                \draw[thick] ($(240:1)+(-1,0)$) -- (v1);
                
                \draw[thick] ($(60:1)+(1,0)$) -- (v2);
                \draw[thick] ($(20:1)+(1,0)$) -- (v2);
                \draw[thick] ($(340:1)+(1,0)$) -- (v2);
                \draw[thick] ($(300:1)+(1,0)$) -- (v2);
                \begin{knot}[clip width = 5]
                    \strand[very thick, blue] (-1.2,-1) to [out=60, in = 270] (-0.8, 0) to [out=90, in = 300] (-1.2,1);
                    \strand[very thick, red] (1.2,-1) to [out=120, in = 270] (0.8, 0) to [out=90, in = 240] (1.2,1);
                \end{knot}
                \draw[blue, very thick, -stealth] (-0.8, 0.1) -- (-0.8, -0.1);
                \draw[red, very thick, -stealth] (0.8, -0.1) -- (0.8, 0.1);
                \draw[gray, dashed] (-1.8, 0) circle (1.2);
                \node at (-2.3,0) {$L_{G_1}^\plab$};
                \draw[gray, dashed] (1.8, 0) circle (1.2);
                \node at (2.3,0) {$L_{G_2}^\plab$};
             \node at (0,-2) {$L_{G_1}^\plab \sqcup L_{G_2}^\plab$};
             \draw[gray, dashed] (3.5, -1.5) -- (3.5,1.5);
            \end{scope}
            \begin{scope}[xshift=7cm]
                \begin{scope}
                 \node[circle, fill=white, draw=black, scale = 0.4] (v1) at (-1,0) {};
                \node[circle, fill=white, draw=black, scale = 0.4] (v2) at (1,0) {};
                \draw[thick] ($(120:1)+(-1,0)$) -- (v1);
                \draw[thick] ($(180:1)+(-0.75,0)$) -- (v1);
                \draw[thick] ($(240:1)+(-1,0)$) -- (v1);
                
                \draw[thick] ($(60:1)+(1,0)$) -- (v2);
                \draw[thick] ($(20:1)+(1,0)$) -- (v2);
                \draw[thick] ($(340:1)+(1,0)$) -- (v2);
                \draw[thick] ($(300:1)+(1,0)$) -- (v2);
                \draw[gray, dashed] (-1.8, 0) circle (1.2);
                \draw[gray, dashed] (1.8, 0) circle (1.2);
                \node at (-2.3,0)  {$L_{G_1}^\plab$};
                \begin{knot}[clip width = 2]
                    \strand[very thick, blue] (-1.3,1) to [out=320, in = 180] (0, 0.2) to [out=0, in = 220] (1.3,1);
                    \strand[very thick, red] (-1.3,-1) to [out=40, in = 180] (0, -0.2) to [out=0, in = 140] (1.3,-1);
                \end{knot}
                \draw[blue, very thick, stealth-] (0.1, 0.2) -- (-0.1, 0.2);
                \draw[red, very thick, -stealth] (0.1, -0.2) -- (-0.1, -0.2);
                \node at (2.3,0) 
                {$L_{G_2}^\plab$};
             \node at (0,-2) {$L_{G_1}^\plab \# L_{G_2}^\plab$};
             \node at (3.5, 0) {$\cong$};
            \end{scope}
            \begin{scope}[xshift=7cm]
                 \node[circle, fill=white, draw=black, scale = 0.4] (v1) at (-1,0) {};
                \node[circle, fill=white, draw=black, scale = 0.4] (v2) at (1,0) {};
                \draw[thick] ($(120:1)+(-1,0)$) -- (v1);
                \draw[thick] ($(180:1)+(-0.75,0)$) -- (v1);
                \draw[thick] ($(240:1)+(-1,0)$) -- (v1);
                
                \draw[thick] ($(60:1)+(1,0)$) -- (v2);
                \draw[thick] ($(20:1)+(1,0)$) -- (v2);
                \draw[thick] ($(340:1)+(1,0)$) -- (v2);
                \draw[thick] ($(300:1)+(1,0)$) -- (v2);
                \draw[thick] (v1) -- (v2);
                \draw[gray, dashed] (-1.8, 0) circle (1.2);
                \draw[gray, dashed] (1.8, 0) circle (1.2);
                \node at (-2.3,0)  {$L_{G_1}^\plab$};
                \begin{knot}[clip width = 2, flip crossing = 1]
                    \strand[very thick, blue] (-1.3,1) to [out=320, in = 180] (0, -0.3) to [out=0, in = 220] (1.3,1);
                    \strand[very thick, red] (1.3,-1) to [out=140, in = 360] (0, -0.1) to [out=180, in = 40] (-1.3,-1);
                \end{knot}
                \draw[red, very thick, -stealth] (-0.7, -0.4) -- (-0.8, -0.5);
                \draw[blue, very thick, -stealth] (0.86, 0.4) -- (0.92, 0.5);
                \node at (2.3,0) {$L_{G_2}^\plab$};
             \node at (0,-2) {$L_{G}^\plab$};
            \end{scope}
            \end{scope}
            \draw (-3, -2.5) -- (18,-2.5);
        \end{scope}

        \begin{scope}[yshift = -9cm]
            \begin{scope}
                 \node[circle, fill=black, draw=black, scale = 0.4] (v1) at (-1,0) {};
                \node[circle, fill=white, draw=black, scale = 0.4] (v2) at (1,0) {};
                \draw[thick] ($(120:1)+(-1,0)$) -- (v1);
                \draw[thick] ($(180:1)+(-0.75,0)$) -- (v1);
                \draw[thick] ($(240:1)+(-1,0)$) -- (v1);
                
                \draw[thick] ($(60:1)+(1,0)$) -- (v2);
                \draw[thick] ($(20:1)+(1,0)$) -- (v2);
                \draw[thick] ($(340:1)+(1,0)$) -- (v2);
                \draw[thick] ($(300:1)+(1,0)$) -- (v2);
                \begin{knot}[clip width = 5]
                    \strand[very thick, blue] (-1.2,-1) to [out=60, in = 270] (-0.8, 0) to [out=90, in = 300] (-1.2,1);
                    \strand[very thick, red] (1.2,-1) to [out=120, in = 270] (0.8, 0) to [out=90, in = 240] (1.2,1);
                \end{knot}
                \draw[blue, very thick, -stealth] (-0.8, -0.1) -- (-0.8, 0.1);
                \draw[red, very thick, -stealth] (0.8, -0.1) -- (0.8, 0.1);
                \draw[gray, dashed] (-1.8, 0) circle (1.2);
                \node at (-2.3,0) {$L_{G_1}^\plab$};
                \draw[gray, dashed] (1.8, 0) circle (1.2);
                \node at (2.3,0) {$L_{G_2}^\plab$};
             \node at (0,-2) {$L_{G_1}^\plab \sqcup L_{G_2}^\plab$};
             \draw[gray, dashed] (3.5, -1.5) -- (3.5,1.5);
            \end{scope}
            \begin{scope}[xshift=7cm]
                \begin{scope}
                 \node[circle, fill=black, draw=black, scale = 0.4] (v1) at (-1,0) {};
                \node[circle, fill=white, draw=black, scale = 0.4] (v2) at (1,0) {};
                \draw[thick] ($(120:1)+(-1,0)$) -- (v1);
                \draw[thick] ($(180:1)+(-0.75,0)$) -- (v1);
                \draw[thick] ($(240:1)+(-1,0)$) -- (v1);
                
                \draw[thick] ($(60:1)+(1,0)$) -- (v2);
                \draw[thick] ($(20:1)+(1,0)$) -- (v2);
                \draw[thick] ($(340:1)+(1,0)$) -- (v2);
                \draw[thick] ($(300:1)+(1,0)$) -- (v2);
                \draw[gray, dashed] (-1.8, 0) circle (1.2);
                \draw[gray, dashed] (1.8, 0) circle (1.2);
                \node at (-2.3,0)  {\raisebox{\depth}{\scalebox{1}[-1]{$L_{G_1}^\plab$}}};;
                \begin{knot}[clip width = 2]
                    \strand[very thick, blue] (-1.3,1) to [out=320, in = 180] (0, 0.2) to [out=0, in = 220] (1.3,1);
                    \strand[very thick, red] (-1.3,-1) to [out=40, in = 180] (0, -0.2) to [out=0, in = 140] (1.3,-1);
                \end{knot}
                \draw[blue, very thick, stealth-] (0.1, 0.2) -- (-0.1, 0.2);
                \draw[red, very thick, -stealth] (0.1, -0.2) -- (-0.1, -0.2);
                \node at (2.3,0) 
                {$L_{G_2}^\plab$};
             \node at (0,-2) {$L_{G_1}^\plab \# L_{G_2}^\plab$};
             \node at (3.5, 0) {$\cong$};
            \end{scope}
            \begin{scope}[xshift=7cm]
                 \node[circle, fill=black, draw=black, scale = 0.4] (v1) at (-1,0) {};
                \node[circle, fill=white, draw=black, scale = 0.4] (v2) at (1,0) {};
                \draw[thick] ($(120:1)+(-1,0)$) -- (v1);
                \draw[thick] ($(180:1)+(-0.75,0)$) -- (v1);
                \draw[thick] ($(240:1)+(-1,0)$) -- (v1);
                
                \draw[thick] ($(60:1)+(1,0)$) -- (v2);
                \draw[thick] ($(20:1)+(1,0)$) -- (v2);
                \draw[thick] ($(340:1)+(1,0)$) -- (v2);
                \draw[thick] ($(300:1)+(1,0)$) -- (v2);
                \draw[thick] (v1) -- (v2);
                \draw[gray, dashed] (-1.8, 0) circle (1.2);
                \draw[gray, dashed] (1.8, 0) circle (1.2);
                \node at (-2.3,0)  {$L_{G_1}^\plab$};
                \begin{knot}[clip width = 2]
                    \strand[very thick, blue] (-1.3,-1) to [out=40, in = 190] (0, -0.2
                    ) to [out=10, in = 220] (1.3,1);
                    \strand[very thick, red] (1.3,-1) to [out=140, in = 350] (0, -0.2) to [out=170, in = 320] (-1.3,1);
                \end{knot}
                \draw[red, very thick, -stealth] (-0.8, 0.4) -- (-0.87, 0.5);
                \draw[blue, very thick, -stealth] (0.8, 0.4) -- (0.87, 0.5);
                \node at (2.3,0) {$L_{G_2}^\plab$};
             \node at (0,-2) {$L_{G}^\plab$};
            \end{scope}
            \end{scope}
            \draw (-3, -2.5) -- (18,-2.5);
        \end{scope}
            
            \end{tikzpicture}
        \caption{The plabic link of a plabic graph $G$ is a connected sum of the plabic links of the subgraphs $G_1$ and $G_2$ on either side of a dividing edge $e$ in $G$.}
        \label{fig: connected sums dividing edge}
    \end{figure}
\end{proof}

\begin{prop}\label{prop: disconnected-quiver-connected-sum}
    Let $G$ be a connected plabic graph whose quiver $Q_G$ is a disjoint union of non-empty tree quivers $Q_1,Q_2,\dots, Q_k$ for some $k \geq 2$. Then $L_G^\plab$ is isotopic to a connected sum of links $L_{G_1}^\plab, \dots L_{G_k}^\plab$ for some choice of connected plabic graphs $G_i$ with $Q_{G_i}=Q_i$ for $i=1,\dots, k$.
\end{prop}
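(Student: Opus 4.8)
The plan is to induct on the number $k$ of components of $Q_G$, using Lemma \ref{lem: dividing edge} to split off one connected-sum factor at a time. For the inductive step it suffices to produce a single \emph{dividing edge} $e$ in $G$ whose two sides each contain at least one interior face. Lemma \ref{lem: dividing edge} then expresses $L_G^\plab$ as a connected sum $L_{G_1}^\plab \# L_{G_2}^\plab$, where $G_1$ and $G_2$ are the induced subgraphs on the two sides of the dividing line. Because the dividing line meets $G$ only in $e$, otherwise passing through the interiors of the two boundary faces and across the boundary circle, removing $e$ disconnects $G$ into exactly $G_1$ and $G_2$; as $e$ is the unique edge joining them and $G$ is connected, each of $G_1$ and $G_2$ is itself connected.

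The first key observation is that interior faces lying in distinct components of $Q_G$ are never adjacent in $G$. Two interior faces sharing an edge with opposite-colored endpoints are joined by an edge of $Q_G$, so lie in the same component; and two interior faces sharing a same-colored edge would merge into a single interior face in the bipartite reduction of $G$, so again correspond to the same vertex of $Q_G$. Consequently, for each component $Q_i$ the union $R_i$ of the closures of its interior faces is a connected region (the tree $Q_i$ forces the faces to glue along shared edges), the regions $R_1,\dots,R_k$ are pairwise edge-disjoint, and every face adjacent to $R_i$ from outside is a boundary face. In particular each $R_i$ lies in the interior of the disk.

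With this in hand, I would produce the dividing edge by a planar separation argument. Since $k\ge 2$, fix interior faces in two different regions, say $R_1$ and $R_2$. Choose a simple arc $\gamma$ from the boundary circle to itself that avoids the vertices of $G$, stays within the union of the open boundary faces together with the edges separating them, and separates $R_1$ from $R_2$ in the disk; among all such arcs take one crossing the fewest edges of $G$. Every edge $\gamma$ crosses has boundary faces on both sides, hence is a dividing edge, and a rerouting argument (pushing $\gamma$ out to the boundary circle through an intermediate boundary face whenever it crosses two consecutive edges) shows that a minimal such arc crosses exactly one edge $e$. Because $\gamma$ separates $R_1$ from $R_2$, the two sides $G_1, G_2$ of $e$ each contain an interior face. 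Since $e$ is not an edge of $Q_G$ and the dividing line meets no region $R_i$, each component $Q_i$ lies entirely on one side; thus $Q_{G_1}$ and $Q_{G_2}$ are each unions of some of the $Q_i$, both nonempty. Applying Lemma \ref{lem: dividing edge} together with the inductive hypothesis to whichever of $G_1, G_2$ still has more than one component, and using associativity of the connected sum, yields the desired decomposition into factors $L_{G_i}^\plab$ with $Q_{G_i}=Q_i$.

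The main obstacle is precisely this planar separation step: showing rigorously that a separating arc between two regions can be taken to cross a single edge, equivalently that a dividing edge with interior faces on both sides always exists. The delicate points are handling regions that meet only at a vertex and justifying the minimality/rerouting argument carefully. The natural framework for making this precise is the face-adjacency (planar dual) structure of $G$ relative to the disk, in which a dividing edge corresponds to an edge joining two boundary faces that both touch the boundary circle, and the separating arc corresponds to a short cycle through the outer region cutting the quiver vertices nontrivially.
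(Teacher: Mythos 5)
Your overall strategy (induction on $k$, reduction to producing a single dividing edge with interior faces on both sides, then Lemma \ref{lem: dividing edge}) matches the paper's, but the mechanism you propose for locating the dividing edge has a genuine gap. Your ``first key observation'' is not correct as stated: the regions $R_1,\dots,R_k$ need not be disjoint, because interior faces belonging to different components of $Q_G$ can share a \emph{vertex} of $G$, and in the presence of a loop edge they can even share an edge --- the face enclosed by a loop at $v$ is an isolated vertex of $Q_G$, yet it is adjacent across that loop to the face surrounding it, which may be interior and lie in another component. The justification you give (``two interior faces sharing a same-colored edge would merge into a single interior face in the bipartite reduction'') is also wrong: contracting such an edge merges its two endpoints into one vertex; the two faces on either side remain distinct faces and distinct vertices of $Q_G$.

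This matters because the planar separation step is precisely where the shared-vertex configuration breaks the argument: if $R_1\cap R_2\neq\emptyset$, no arc in the disk can separate $R_1$ from $R_2$, so your minimal separating arc does not exist, and indeed no dividing edge with the required property need be present in $G$ as drawn. The paper resolves this by \emph{modifying} $G$: when a vertex $v$ lies on interior faces from two different components (the case $v\in V_i\cap V_j$), it uncontracts $v$ into a new edge (first ``pushing'' a loop face through to a boundary face if necessary), and that new edge is the dividing edge; only when the sets $V_1,\dots,V_k$ are pairwise disjoint does a dividing edge already exist in $G$, found by walking along a path between components until it first exits $V_i$. You flag the shared-vertex case as a ``delicate point,'' but it is not a technicality --- it requires an extra local move that your argument does not supply. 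In the genuinely disjoint case, your rerouting/minimality argument for the separating arc is a plausible alternative to the paper's path argument and could likely be made rigorous, but as written the proposal does not prove the proposition.
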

\begin{proof}
    We will proceed by induction on $k$. For $k=1$, the statement is trivial. For $k>1$, then by Lemma \ref{lem: dividing edge} it suffices to find a dividing edge $e$ in $G$ such that $Q_{G_1}$ and $Q_{G_2}$ are both non-empty. More specifically, one has $Q_{G_1}\sqcup Q_{G_2} = Q_G$. Suppose without loss of generality that $Q_{G_1} = Q_1 \sqcup Q_2 \sqcup \dots \sqcup Q_\ell$ and $Q_{G_2} = Q_{\ell+1} \sqcup Q_{\ell+2} \sqcup \dots \sqcup Q_k$ for some $1 \leq \ell < k$. Note that $G_1$ and $G_2$ are both connected if $G$ is connected. Therefore, by the inductive hypothesis, $L_{G_1}^\plab$ is a connected sum of plabic links $L_1, L_2,\dots, L_\ell$ for some connected plabic graphs whose quivers are $Q_1, Q_2,\dots, Q_\ell$ respectively. Similarly, $L_{G_2}^\plab$ is a connected sum of plabic links $L_{\ell+1}, L_{\ell+2},\dots, L_k$ for some connected plabic graphs whose quivers are $Q_{\ell+1}, Q_{\ell+2},\dots, Q_k$ respectively. The result now follows since $L_G^\plab$ is a connected sum of $L_{G_1}^\plab$ and $L_{G_2}^\plab$.

    For convenience, we will replace $G$ with the bipartite reduction of its tail reduction. That is, we can assume $G$ has no boundary vertices and that all edges with distinct endpoints of the same color have been contracted into one vertex. For each $i\in \{1,\dots, k\}$ let $V_i$ be the subset of vertices in $G$ which are on the boundary of some interior face $F$ where the corresponding vertex $v_{F}\in Q_G$ is in $Q_i$. We will break the inductive step into a few cases based on whether or not the sets $V_1, V_2,\dots, V_k$ are pairwise disjoint. 
    
    First we will consider the case where the sets are not pairwise disjoint. Then there exists a vertex $v \in V_i \cap V_j$ for some $i \neq j$. Without loss of generality, we will assume that $v$ is a black vertex. First, let us consider the case that there is a loop edge at $v$. In this case, the face $F$ enclosed by this loop edge corresponds to a single isolated vertex in $Q_G$. Thus, any other faces with $v$ on their boundary must either be boundary faces or must correspond to vertices in different connected components of $Q_G$. Since $v$ is in the intersection of $V_i$ and $V_j$, there must be at least one other interior face with $v$ on its boundary. We claim that there must also be at least one boundary face with $v$ on its boundary. If not, this would create a cycle in $Q_G$ as pictured in Figure \ref{fig: cycle at v} because there is an edge in $Q_G$ between each pair of adjacent non-loop interior faces with $v$ on their boundary.
    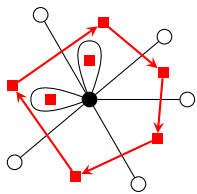
\begin{figure}
        \centering
        \begin{tikzpicture}[scale = 1.3]
            \node[scale = 0.5, circle, fill=black, draw=black] (v) at (0,0) {};
            \node[scale = 0.5, circle, fill=white, draw=black] (1) at (0:1) {};
            \node[scale = 0.5, circle, fill=white, draw=black] (2) at (40:1) {};
            \node[scale = 0.5, circle, fill=white, draw=black] (3) at (120:1) {};
            \node[scale = 0.5, circle, fill=white, draw=black] (4) at (220:1) {};
            \node[scale = 0.5, circle, fill=white, draw=black] (5) at (300:1) {};

            \node[scale = 0.5, fill=red] (q1) at (20:0.8) {};
            \node[scale = 0.5, fill=red] (q2) at (80:0.8) {};
            \node[scale = 0.5, fill=red] (q3) at (170:0.8) {};
            \node[scale = 0.5, fill=red] (q4) at (260:0.8) {};
            \node[scale = 0.5, fill=red] (q5) at (330:0.8) {};
            \node[scale = 0.5, fill=red] (l1) at (180:0.4) {};
            \node[scale = 0.5, fill=red] (l2) at (90:0.4) {};

            \draw (v) to [out= 110, in = 180] (0,0.6) to [out=0, in = 70] (v);
            \draw (v) to [out= 160, in = 90] (-0.6,0) to [out=270, in = 200] (v);
            \draw (v) -- (1);
            \draw (v) -- (2);
            \draw (v) -- (3);
            \draw (v) -- (4);
            \draw (v) -- (5);

            \draw[red, thick, stealth-] (q1) -- (q2);
            \draw[red, thick, stealth-] (q2) -- (q3);
            \draw[red, thick, stealth-] (q3) -- (q4);
            \draw[red, thick, stealth-] (q4) -- (q5);
            \draw[red, thick, stealth-] (q5) -- (q1);
        \end{tikzpicture}
        \caption{If there are no boundary faces at $v$, there is a cycle in $Q_G$.}
        \label{fig: cycle at v}
    \end{figure}
    
    Now, the face $F$ enclosed by a loop edge at $v$ is either surrounded by an interior face or a boundary face. The former case can be reduced to the latter. In particular, if $F$ is surrounded by an interior face $F'$, then one can uncontract $v$ into an edge as pictured in Figure \ref{fig: push loop through} so that the boundary of $F$ is two black vertices connected by two edges. The edge which was formerly a loop can be contracted to ``push" the face $F$ through so that it is instead surrounded by a boundary face $B$. Once in this scenario where a loop face is surrounded by a boundary face, the last step in Figure \ref{fig: push loop through} shows how $v$ can be uncontracted into a dividing edge. Let $G'$ be the result of this procedure. Then one subgraph, say $G_1$, formed by splitting along this dividing edge consists of the loop edge at a vertex $u$ which encloses $F$. The other subgraph $G_2$ is the graph $G'-\{u\}$. Then $Q_{G_1}$ is an isolated vertex and $Q_{G_2}$ is $Q_{G'}$ is $Q_{G}$ minus this isolated vertex. Since both are non-empty, the result now follows.
    \begin{figure}
        \centering
        \begin{tikzpicture}[scale = 1.3]
        \begin{scope}
            \node[scale = 0.5, circle, fill=black, draw=black] (v) at (0,0) {};
            \node[scale = 0.5, circle, fill=white, draw=black] (1) at (0:1) {};
            \node[scale = 0.5, circle, fill=white, draw=black] (2) at (40:1) {};
            \node[scale = 0.5, circle, fill=white, draw=black] (3) at (120:1) {};
            \node[scale = 0.5, circle, fill=white, draw=black] (4) at (220:1) {};
            \node[scale = 0.5, circle, fill=white, draw=black] (5) at (300:1) {};
            
            \draw[thick] (v) to [out= 110, in = 180] (0,0.8) to [out=0, in = 70] (v);
            \draw[thick] (v) to [out= 160, in = 90] (-0.8,0) to [out=270, in = 200] (v);

            \node (F) at (90:0.5) {\small{$F$}};
            \node (B) at (250:0.5) {\small{$B$}};
            \draw[thick] (v) -- (1);
            \draw[thick] (v) -- (2);
            \draw[thick] (v) -- (3);
            \draw[thick] (v) -- (4);
            \draw[thick] (v) -- (5);

            \draw[gray, -stealth] (1.5,0) -- (2,0);
        \end{scope}
        \begin{scope}[xshift = 3.25cm]
            \node[scale = 0.5, circle, fill=black, draw=black] (v1) at (-0.25,0) {};
            \node[scale = 0.5, circle, fill=black, draw=black] (v2) at (0.25,0) {};
            \node[scale = 0.5, circle, fill=white, draw=black] (1) at (0:1) {};
            \node[scale = 0.5, circle, fill=white, draw=black] (2) at (40:1) {};
            \node[scale = 0.5, circle, fill=white, draw=black] (3) at (120:1) {};
            \node[scale = 0.5, circle, fill=white, draw=black] (4) at (220:1) {};
            \node[scale = 0.5, circle, fill=white, draw=black] (5) at (300:1) {};
            
            \draw[thick] (v1) to [out= 90, in = 180] (0,0.8) to [out=0, in = 90] (v2);
            \draw[thick] (v1) to [out= 160, in = 90] (-0.8,0) to [out=270, in = 200] (v1);

            \node (F) at (90:0.5) {\small{$F$}};
            \node (B) at (250:0.5) {\small{$B$}};
            \draw[thick] (v1) -- (v2);
            \draw[thick] (v2) -- (1);
            \draw[thick] (v2) -- (2);
            \draw[thick] (v1) -- (3);
            \draw[thick] (v1) -- (4);
            \draw[thick] (v2) -- (5);

            \draw[gray, -stealth] (1.5,0) -- (2,0);
        \end{scope}
        \begin{scope}[xshift = 6.5cm]
            \node[scale = 0.5, circle, fill=black, draw=black] (v) at (0,0) {};
            \node[scale = 0.5, circle, fill=white, draw=black] (1) at (0:1) {};
            \node[scale = 0.5, circle, fill=white, draw=black] (2) at (40:1) {};
            \node[scale = 0.5, circle, fill=white, draw=black] (3) at (120:1) {};
            \node[scale = 0.5, circle, fill=white, draw=black] (4) at (220:1) {};
            \node[scale = 0.5, circle, fill=white, draw=black] (5) at (300:1) {};
            
            \draw[thick] (v) to [out= 250, in = 180] (0,-0.8) to [out=0, in = 290] (v);
            \draw[thick] (v) to [out= 160, in = 90] (-0.8,0) to [out=270, in = 200] (v);

            \node (F) at (270:0.5) {\small{$F$}};
            \node (B) at (235:0.7) {\small{$B$}};
            \draw[thick] (v) -- (1);
            \draw[thick] (v) -- (2);
            \draw[thick] (v) -- (3);
            \draw[thick] (v) -- (4);
            \draw[thick] (v) -- (5);
            \draw[gray, -stealth] (1.5,0) -- (2,0);
        \end{scope}
        \begin{scope}[xshift = 9.75cm]
            \node[scale = 0.5, circle, fill=black, draw=black] (v1) at (0,0) {};
            \node[scale = 0.5, circle, fill=black, draw=black] (v2) at (0,-0.5) {};
            \node[scale = 0.5, circle, fill=white, draw=black] (1) at (0:1) {};
            \node[scale = 0.5, circle, fill=white, draw=black] (2) at (40:1) {};
            \node[scale = 0.5, circle, fill=white, draw=black] (3) at (120:1) {};
            \node[scale = 0.5, circle, fill=white, draw=black] (4) at (220:1) {};
            \node[scale = 0.5, circle, fill=white, draw=black] (5) at (300:1) {};

            \draw[thick] (v1) -- (v2);
             
            \draw[thick] (v2) to [out= 250, in = 180] (0,-1.3) to [out=0, in = 290] (v2);
            \draw[thick] (v1) to [out= 160, in = 90] (-0.8,0) to [out=270, in = 200] (v1);

            \node (F) at (270:1) {\small{$F$}};
            \node (B) at (235:0.7) {\small{$B$}};
            \draw[thick] (v1) -- (1);
            \draw[thick] (v1) -- (2);
            \draw[thick] (v1) -- (3);
            \draw[thick] (v1) -- (4);
            \draw[thick] (v1) -- (5);

            \draw[thick, dashed, blue] (-0.25,-0.3) -- (0.15, -0.3);
        \end{scope}
        \end{tikzpicture}
        \caption{If $F$ is a face whose boundary is a loop edge at $v$ and which is surrounded by another interior face, then $F$ can be ``pushed through" so that it is instead surround by a boundary face $B$. Then $v$ can be uncontracted into a dividing edge.}
        \label{fig: push loop through}
    \end{figure}
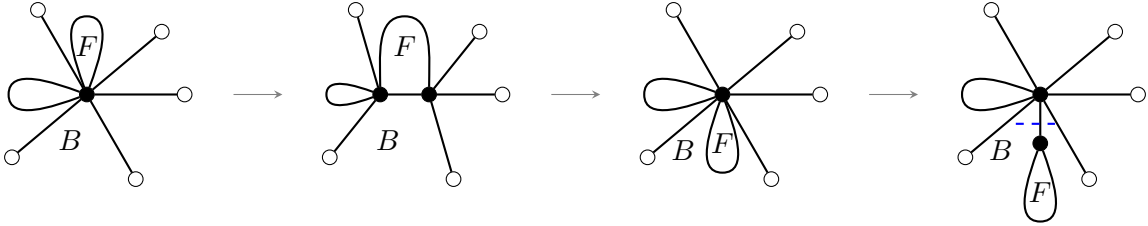

    If instead $v$ is a vertex in $V_i \cap V_j$ for $i \neq j$ but there are no loop edges at $v$, we claim that as one looks locally at $G$ around $v$, at least two (not necessarily distinct) boundary faces should appear, and they should not share an edge coming out of $v$. This holds because, as evident in Figure \ref{fig: cycle at v}, if there are two interior faces with $v$ on their boundary which share an edge coming out of $v$, they must correspond to vertices which are in the same connected component in $Q_G$. Therefore, if there are no boundary faces with $v$ on their boundary, the induced subquiver of $Q_G$ on vertices corresponding to faces with $v$ on their boundary is a cycle. If there is only one boundary face with $v$ on its boundary or all boundary faces appear consecutively in one block as one moves clockwise around $v$, the induced subquiver of $Q_G$ on vertices corresponding to faces with $v$ on their boundary is a path. In both scenarios, all interior faces with $v$ on their boundary would belong to the same connected component of $Q_G$, contradicting our assumption on $v$. 
    
    Therefore, there must be two (not necessarily distinct) boundary faces $B$ and $B'$ which appear around $v$ but such that as one travels clockwise around $v$, at least one interior face appears as one travels from $B$ to $B'$ and from $B'$ to $B$. Without loss of generality, let us say that a face corresponding to a vertex in $Q_i$ appears as one travels clockwise from $B'$ to $B$ and a face corresponding to a vertex in $Q_j$ as one travels from $B$ to $B'$. Now, one can uncontract $v$ as pictured in Figure \ref{fig: two boundary faces} into a dividing edge. One subgraph $G_1$ formed by dividing the graph along this dividing edge has $Q_i$ as a connected component in its quiver while the other subgraph $G_2$ has $Q_j$ as a connected component of its quiver, so both are non-empty. The result now follows in all cases where there is a vertex which is in the intersection $V_i \cap V_j$ for $i \neq j$.
    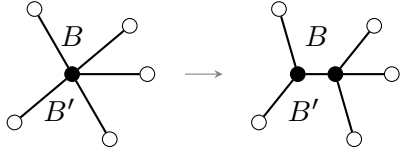
\begin{figure}
        \centering
        \begin{tikzpicture}
            \begin{scope}
            \node[scale = 0.5, circle, fill=black, draw=black] (v) at (0,0) {};
            \node[scale = 0.5, circle, fill=white, draw=black] (1) at (0:1) {};
            \node[scale = 0.5, circle, fill=white, draw=black] (2) at (40:1) {};
            \node[scale = 0.5, circle, fill=white, draw=black] (3) at (120:1) {};
            \node[scale = 0.5, circle, fill=white, draw=black] (4) at (220:1) {};
            \node[scale = 0.5, circle, fill=white, draw=black] (5) at (300:1) {};

            \node (F) at (90:0.5) {\small{$B$}};
            \node (B) at (250:0.5) {\small{$B'$}};
            \draw[thick] (v) -- (1);
            \draw[thick] (v) -- (2);
            \draw[thick] (v) -- (3);
            \draw[thick] (v) -- (4);
            \draw[thick] (v) -- (5);

            \draw[gray, -stealth] (1.5,0) -- (2,0);
        \end{scope}
        \begin{scope}[xshift = 3.25cm]
            \node[scale = 0.5, circle, fill=black, draw=black] (v1) at (-0.25,0) {};
            \node[scale = 0.5, circle, fill=black, draw=black] (v2) at (0.25,0) {};
            \node[scale = 0.5, circle, fill=white, draw=black] (1) at (0:1) {};
            \node[scale = 0.5, circle, fill=white, draw=black] (2) at (40:1) {};
            \node[scale = 0.5, circle, fill=white, draw=black] (3) at (120:1) {};
            \node[scale = 0.5, circle, fill=white, draw=black] (4) at (220:1) {};
            \node[scale = 0.5, circle, fill=white, draw=black] (5) at (300:1) {};

            \node (F) at (90:0.5) {\small{$B$}};
            \node (B) at (250:0.5) {\small{$B'$}};
            \draw[thick] (v1) -- (v2);
            \draw[thick] (v2) -- (1);
            \draw[thick] (v2) -- (2);
            \draw[thick] (v1) -- (3);
            \draw[thick] (v1) -- (4);
            \draw[thick] (v2) -- (5);
        \end{scope}
        \end{tikzpicture}
        \caption{If there are two (not necessarily distinct) boundary faces $B$ and $B'$ which appear around the vertex $v$, then $v$ can be uncontracted into a dividing edge.}
        \label{fig: two boundary faces}
    \end{figure}

    Finally, we consider the case in which the sets $V_1, V_2,\dots, V_k$ are pairwise disjoint. In this case, there can be no loop edges at a vertex $v$ unless all other faces with $v$ on their boundary are boundary faces. In this case, $v$ can be uncontracted into a dividing edge, similar to what is pictured in the last step of Figure \ref{fig: push loop through}. 
    
    Otherwise, as one travels clockwise around $v$, all interior faces must appear consecutively in one block, and all boundary faces must appear consecutively in one block. There also must be at least one boundary face since otherwise there would be a cycle in $Q_G$, as discussed above. See Figure \ref{fig: disjoint no loops} for an example where there are three interior faces $F_1$, $F_2$, and $F_3$ and two boundary faces $B_1$ and $B_2$ which have $v$ on their boundary.
    \begin{figure}
        \centering
        \begin{tikzpicture}
            \begin{scope}
            \node[scale = 0.5, circle, fill=black, draw=black] (v) at (0,0) {};
            \node[scale = 0.5, circle, fill=white, draw=black] (1) at (0:1) {};
            \node[scale = 0.5, circle, fill=white, draw=black] (2) at (40:1) {};
            \node[scale = 0.5, circle, fill=white, draw=black] (3) at (120:1) {};
            \node[scale = 0.5, circle, fill=white, draw=black] (4) at (220:1) {};
            \node[scale = 0.5, circle, fill=white, draw=black] (5) at (300:1) {};

            \node (F3) at (90:0.5) {\small{$F_3$}};
            \node (B1) at (20:0.7) {\small{$B_1$}};
            \node (B2) at (330:0.7) {\small{$B_2$}};
            \node (F1) at (260:0.5) {\small{$F_1$}};
            \node (F2) at (170:0.5) {\small{$F_2$}};
             
            \draw[thick] (v) -- (1);
            \draw[thick] (v) -- (2);
            \draw[thick] (v) -- (3);
            \draw[thick] (v) -- (4);
            \draw[thick] (v) -- (5);

        \end{scope}
        \end{tikzpicture}
        \caption{If a vertex $v$ is only in a single $V_i$ and there are no loop edges at $v$, the interior faces must appear consecutively in one block, as must the boundary faces.}
        \label{fig: disjoint no loops}
    \end{figure}
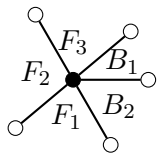
    Let $v_i$ and $v_j$ be distinct vertices in $V_i$ and $V_j$ respectively for some $i \neq j$. Since $G$ is connected, there must be a path between $v_i$ and $v_j$. Fix such a path, and let $v$ be the last vertex in $V_i$ which appears as one travels along this path. Let $e$ be the edge in the path which goes from $v$ to the next vertex $u$. Then $e$ must have two boundary faces on either side of it. If this were not the case, then since $v \in V_i$, $e$ would have an interior face corresponding to a vertex in $Q_i$ one one side of it. This would imply that $u$ is in $V_i$, a contradiction. Therefore, $e$ is a dividing edge; see Figure \ref{fig: disjoint no loop dividing edge}. Observe that the subgraph to the left of this dividing edge as pictured in Figure \ref{fig: disjoint no loop dividing edge} contains $Q_i$ as a connected component of its quiver. On the other hand, the subgraph to the right of this dividing edge has $Q_j$ as one of the connected components of its quiver. Since the quivers of both graphs are therefore non-empty, the proof is complete.
    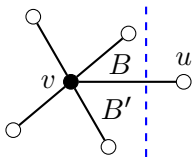
\begin{figure}
        \centering
        \begin{tikzpicture}
            \begin{scope}
            \node[scale = 0.5, circle, fill=black, draw=black] (v) at (0,0) {};
            \node[scale = 0.5, circle, fill=white, draw=black] (1) at (0:1.5) {};
            \node[scale = 0.5, circle, fill=white, draw=black] (2) at (40:1) {};
            \node[scale = 0.5, circle, fill=white, draw=black] (3) at (120:1) {};
            \node[scale = 0.5, circle, fill=white, draw=black] (4) at (220:1) {};
            \node[scale = 0.5, circle, fill=white, draw=black] (5) at (300:1) {};

            \node (B) at (20:0.7) {\small{$B$}};
            \node (B') at (330:0.7) {\small{$B'$}};
            \node (vlab) at (180:0.3) {\small{$v$}};
            \node (ulab) at ($(0:1.5)+(0,0.3)$) {\small{$u$}};
             
            \draw[thick] (v) -- (1);
            \draw[thick] (v) -- (2);
            \draw[thick] (v) -- (3);
            \draw[thick] (v) -- (4);
            \draw[thick] (v) -- (5);

            \draw[thick, dashed, blue] (1,-1) -- (1,1);

        \end{scope}
        \end{tikzpicture}
        \caption{When the sets $V_1,V_2,\dots, V_k$ are pairwise disjoint, there is a dividing edge at a vertex $v \in V_i$ if there is a path from $v$ to a vertex in $V_j$ for $i \neq j$ which does not go through any other vertices in $V_i$.}
        \label{fig: disjoint no loop dividing edge}
    \end{figure}
    
\end{proof}

 In order to utilize Lemma \ref{skeinrelation} to compute the HOMFLY polynomial of a plabic link, we first show that for a connected plabic graph $G$ whose quiver is connected, a leaf $v$ in $Q_G$ corresponds to a boundary leaf face.

\begin{prop}\label{prop:blf}
    Let $G$ be a trivalent, connected plabic graph whose quiver is connected. Suppose $v$ is a leaf in $Q_G$. Then the face $F$ in $G$ which corresponds to $v$ can be assumed to be a boundary leaf face, possibly after applying local move (b). 
\end{prop}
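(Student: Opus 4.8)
The plan is to first rephrase the hypothesis that $v$ is a leaf as a statement about the edges bounding $F$, and then to collapse $F$ onto the edge it shares with its unique interior neighbor by repeatedly peeling off the part of its boundary that faces boundary faces.

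By the construction of $Q_G$, the degree of $v = v_F$ equals the number of edges on $\partial F$ which have opposite-colored endpoints and separate $F$ from another interior face. Since $v$ is a leaf, there is exactly one such edge $e^*$, and it separates $F$ from the interior face $F'$ corresponding to the unique neighbor of $v$ in $Q_G$. Every other edge of $\partial F$ therefore either has endpoints of the same color or separates $F$ from a boundary face. Because a boundary leaf face is tested in the tail reduction, and tail reduction (local move (d)) does not change $Q_G$, I would replace $G$ by its tail reduction and reduce the claim to the following: after finitely many applications of local move (b), the boundary of $F$ consists of $e^*$ together with a single further edge that separates $F$ from a boundary face.

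To establish this, I would show that the arc of $\partial F$ avoiding $e^*$ can be shortened whenever it contains more than one edge. If that arc has an interior trivalent vertex $u$, then both edges of $\partial F$ meeting $u$ separate $F$ from boundary faces, so the third edge $f$ at $u$ has a boundary face on each side and is thus a dividing edge. Removing $f$ writes $Q_G$ as $Q_{G_1} \sqcup Q_{G_2}$, exactly as in the proof of Proposition \ref{prop: disconnected-quiver-connected-sum}; since $Q_G$ is connected while the side of $f$ carrying $F$ also carries $F'$ (they meet along $e^* \neq f$) and hence at least two interior faces, the opposite side of $f$ contains no interior face. The piece of $G$ beyond $f$ is therefore interior-face-free, hence a tree-like appendage, and can be absorbed using tail reduction together with local move (b) to clear any same-colored edges; once it is removed, $u$ has degree two and may be smoothed, which strictly decreases the number of edges of $\partial F$ without disturbing $e^*$ and without creating a new interior face. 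Any same-colored edge lying directly on $\partial F$ is handled in the same spirit by contracting it with move (b). Iterating, $\partial F$ reduces to $e^*$ and a single remaining edge $e^{**}$ separating $F$ from a boundary face; as $e^*$ is an edge with opposite-colored endpoints, its endpoints $x$ and $y$ have different colors, and $e^{**}$ joins this same pair $x, y$, so $F$ is precisely a boundary leaf face.

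The main obstacle is the local reduction in the preceding paragraph. The delicate points are to verify, over the possible colorings of the edges along $\partial F$ and the possible shapes of the interior-face-free regions hanging off the dividing edges, that each such region is genuinely absorbed by the allowed moves and that each absorption strictly decreases the complexity of $\partial F$ while preserving both $Q_G$ and the property that $F$ meets exactly one interior face. Making this case analysis uniform — in particular confirming that an interior-face-free region attached along a single dividing edge always collapses under tail reduction and move (b) — is where the real work lies. It is the connectivity of $Q_G$ that makes these regions interior-face-free in the first place, and hence removable.
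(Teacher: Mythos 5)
Your setup is correct---there is exactly one edge $e^*$ of $\partial F$ with opposite-colored endpoints separating $F$ from an interior face, and every other edge of $\partial F$ either has same-colored endpoints or faces a boundary face---but your reduction step rests on an unjustified claim: that an interior trivalent vertex $u$ of the arc of $\partial F$ avoiding $e^*$ has \emph{both} of its $\partial F$-edges separating $F$ from boundary faces. In general this is false. The typical edge of $\partial F$ other than $e^*$ has same-colored endpoints and separates $F$ from another \emph{interior} face (one corresponding to a vertex of $Q_G$ other than $v$); such an edge contributes nothing to the degree of $v$ and so is invisible to the leaf hypothesis, but the face behind it certainly cannot be ``absorbed'' or removed. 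Consequently the third edge $f$ at $u$ is usually not a dividing edge and your appendage-removal mechanism does not apply. Moreover, in a tail-reduced trivalent graph an interior-face-free region hanging off a dividing edge would be a nonempty finite tree all of whose vertices away from the attachment point are trivalent, which is impossible; so the regions you propose to absorb either cannot occur or force a contradiction, and the step is not doing the work you want it to do. Even granting the contraction of all same-colored edges by move (b), your iteration could terminate with $e^*$ together with \emph{several} edges facing boundary faces, which is not a boundary leaf face.

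The missing ingredient---and the heart of the paper's proof---is the structural lemma that \emph{exactly one} edge of $\partial F$ separates $F$ from a boundary face. This is where connectivity of $Q_G$ and tail-reducedness are actually used: two consecutive boundary faces around $F$ would require a separating edge emanating from their shared vertex whose presence splits the interior faces into two groups with no $Q_G$-path between them, and two nonconsecutive boundary faces around $F$ would likewise cut the interior faces adjacent to $F$ into two blocks that cannot communicate in $Q_G$; either way $Q_G$ would be disconnected. Combined with the parity observation that the endpoints of $e^*$ have different colors, so the color changes an odd number of times along the complementary arc of $\partial F$, one concludes that the unique boundary-facing edge has opposite-colored endpoints and that all remaining edges of $\partial F$ are same-colored and face interior faces. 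Only with this picture in hand does repeated application of the trivalent version of local move (b) along $\partial F$, always involving the boundary-facing edge, terminate in a face bounded by one white and one black vertex joined by two edges, i.e.\ a boundary leaf face. You should prove this uniqueness statement before running any reduction.
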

\begin{proof}
 Replace $G$ with its tail reduction. By the assumption that $Q_G$ contains a leaf, $G$ has at least two interior faces, so the tail reduction has no boundary vertices. The leaf $v$ must be connected to exactly one other vertex $\tilde{v} \in Q_G$ which corresponds to some interior face $F'$. Thus, there must be exactly one edge $e$ along the boundary of $F$ with opposite colored endpoints and another interior face on its other side. 
 
 Additionally, we observe that there cannot be more than one boundary face adjacent to $F$. In particular, consider all the faces which are adjacent to $F$. Let us say that there are $n$ of them and label them $F_1=F', F_2,\dots, F_n$ as they appear if one travels in a clockwise order along the boundary of $F$, starting on the edge $e$ that separates $F$ and $F'$. Suppose first that there were an adjacent pair of these faces, $F_i$ and $F_{i+1}$, which are both boundary faces. Any edge that separates two adjacent boundary faces and connects the boundary of $F$ to the boundary of the disk has been removed via the process of tail removal. Therefore, any edge separating $F_i$ and $F_{i+1}$ which has one endpoint on the boundary of $F$ must have its other endpoint on another interior face in $G$. However, this would imply $Q_G$ had more than one connected component, contradicting our assumptions. 
 
 Similarly, suppose there were two nonadjacent boundary faces adjacent to $F$. Pick $i$ and $j$ with $i<j$ which minimize $|i-j|$ such that $F_i$ and $F_j$ are nonadjacent boundary faces which are adjacent to $F$. Then $F_{i+1},F_{i+2},\dots, F_{j-1}$ forms a collection of interior faces such that each pair of consecutive faces are adjacent. Let $1\leq \ell\leq n$ be the minimum value such that $F_\ell$ is a boundary face, and let $1\leq m\leq n$ be the maximum value such that $F_m$ is a boundary face. Then $F_{m+1},\dots, F_n,F_1,F_2,\dots, F_{\ell-1}$ form a different collection of interior faces such that each pair of consecutive faces are adjacent. Since each of these two collections are separated by boundary faces $F_i$ and $F_j$ and there are no edges in $Q_G$ between the face $F$ and any of the faces $F_{i+1},F_{i+2},\dots, F_{j-1}$, it follows that $Q_G$ has at least two connected components, which again forms a contradiction.
 
 It is also possible to have edges along the boundary of $F$ which have another interior face on their other side and whose endpoints have the same color. Since there is one edge, $e$, along the boundary of $F$ with endpoints of different colors, the color of the vertices along the remainder of the boundary of $F$ must change at some other point. Thus there must be at least one other edge with opposite colored endpoints along the boundary of $F$. By the restrictions discussed above, there can only be one such edge, and it must separate $F$ from a boundary face. Therefore, the general form that $G$ has around $F$ is as pictured in the left side of Figure \ref{fig:leaf-face}. 

 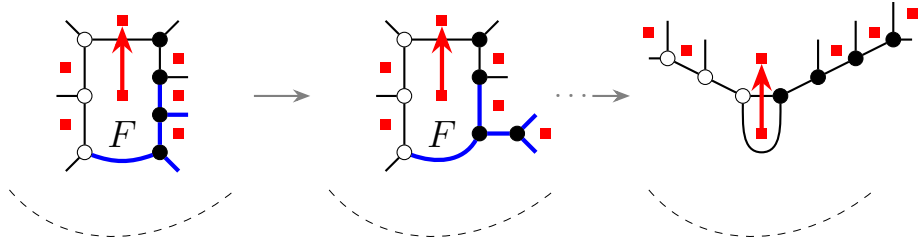
\begin{figure}
    \centering
    \begin{tikzpicture}[scale=0.5]
    \draw [line width=0.3pt, dashed] (-3,0) to [out=310, in=220] (3,0);

    \node [circle, fill=white, draw=black, scale=0.5] (w1) at (-1,4) {};
    \node [circle, fill=black, draw=black, scale=0.5] (b1) at (1,4) {};
    \node [circle, fill=white, draw=black,scale=0.5] (w2) at (-1,2.5) {};
    \node [circle, fill=black, draw=black,scale=0.5] (b2) at (1,3) {};
    \node [circle, fill=white, draw=black,scale=0.5] (w3) at (-1,1) {};
    \node [circle, fill=black, draw=black,scale=0.5] (b3) at (1,2) {};
    \node [circle, fill=black, draw=black, scale=0.5] (b4) at (1,1) {};

    \draw[thick] (w1)--(b1) -- (b2);
    \draw[ultra thick, blue] (b2) -- (b3) -- (b4) to [out = 200, in = 34
    0] (w3);
    \draw[thick] (w3) -- (w2) -- (w1);
    \draw[thick] (w1) -- (-1.5, 4.5);
    \draw[thick] (w2) -- (-1.75, 2.5);
    \draw[thick] (w3) -- (-1.5, 0.5);
    \draw[thick] (b1) -- (1.5, 4.5);
    \draw[thick] (b2) -- (1.75, 3);
    \draw[ultra thick, blue] (b3) -- (1.75, 2);
    \draw[ultra thick, blue] (b4) -- (1.5, 0.5);

    \node [ fill=red, scale=0.5] (Fv) at (0, 2.5) {};
    \node [ fill=red, scale=0.5] (FV) at (0, 4.5) {};
    \node [ fill=red, scale=0.5] (l1) at (-1.5, 3.25) {};
    \node [ fill=red, scale=0.5] (l2) at (-1.5, 1.75) {};
    \node [ fill=red, scale=0.5] (r1) at (1.5, 3.5) {};
    \node [ fill=red, scale=0.5] (r2) at (1.5, 2.5) {};
    \node [ fill=red, scale=0.5] (r3) at (1.5, 1.5) {};
    \draw[ultra thick, red, -Stealth] (Fv) -- (FV);
    \node at (0,1.5) {\large $F$};
    \draw[thick, gray, -Stealth] (3.5, 2.5) -- (5, 2.5); 

    \draw [line width=0.3pt, dashed] (5.5,0) to [out=310, in=220] (11.5,0); 

    \node [circle, fill=white, draw=black, scale=0.5] (w1) at (7.5,4) {};
    \node [circle, fill=black, draw=black, scale=0.5] (b1) at (9.5,4) {};
    \node [circle, fill=white, draw=black,scale=0.5] (w2) at (7.5,2.5) {};
    \node [circle, fill=black, draw=black,scale=0.5] (b2) at (9.5,3) {};
    \node [circle, fill=white, draw=black,scale=0.5] (w3) at (7.5,1) {};
    \node [circle, fill=black, draw=black,scale=0.5] (b3) at (9.5,1.5) {};
    \node [circle, fill=black, draw=black, scale=0.5] (b4) at (10.5,1.5) {};

    \draw[thick] (w1)--(b1) -- (b2);
    \draw[ultra thick, blue] (b2) -- (b3) to [out = 240, in = 34
    0] (w3);
    \draw[ultra thick, blue] (b3) -- (b4);
    \draw[thick] (w3) -- (w2) -- (w1);
    \draw[thick] (w1) -- (7, 4.5);
    \draw[thick] (w2) -- (6.75, 2.5);
    \draw[thick] (w3) -- (7, 0.5);
    \draw[thick] (b1) -- (10, 4.5);
    \draw[thick] (b2) -- (10.25, 3);
    \draw[ultra thick, blue] (b4) -- (11, 2);
    \draw[ultra thick, blue] (b4) -- (11, 1);

    \node [ fill=red, scale=0.5] (Fv) at (8.5, 2.5) {};
    \node [ fill=red, scale=0.5] (FV) at (8.5, 4.5) {};
    \node [ fill=red, scale=0.5] (l1) at (7, 3.25) {};
    \node [ fill=red, scale=0.5] (l2) at (7, 1.75) {};
    \node [ fill=red, scale=0.5] (r1) at (10, 3.5) {};
    \node [ fill=red, scale=0.5] (r2) at (10, 2.25) {};
    \node [ fill=red, scale=0.5] (r3) at (11.25, 1.5) {};
    \draw[ultra thick, red, -Stealth] (Fv) -- (FV);
    \node at (8.5,1.5) {\large $F$};

    \node at (12,2.5) {\textcolor{gray}{$\dots$}};
    \draw[thick, gray, -Stealth] (12.5, 2.5) -- (13.5, 2.5); 

    \draw [line width=0.3pt, dashed] (14,0) to [out=310, in=220] (20,0);

    \node [circle, fill=white, draw=black, scale=0.5] (w1) at (16.5,2.5) {};
    \node [circle, fill=black, draw=black,scale=0.5] (b1) at (17.5,2.5) {};
    \node [circle, fill=white, draw=black, scale=0.5] (w2) at (15.5,3) {};
    \node [circle, fill=black, draw=black, scale=0.5] (b2) at (18.5,3) {};
    \node [circle, fill=white, draw=black, scale=0.5] (w3) at (14.5,3.5) {};
    \node [circle, fill=black, draw=black,scale=0.5] (b3) at (19.5,3.5) {};
    \node [circle, fill=black, draw=black,scale=0.5] (b4) at (20.5,4) {};

    \draw[thick] (w3)--(w2) -- (w1) -- (b1) -- (b2) -- (b3) -- (b4);
    \draw[thick] (b1) to [out = 270, in = 0] (17, 1) to [out=180, in=270]  (w1);
    \draw[thick] (w2) -- (15.5, 4);
    \draw[thick] (w3) -- (14, 3.5);
    \draw[thick] (w3) -- (14.5, 4.5);
    \draw[thick] (b2) -- (18.5, 4);
    \draw[thick] (b3) -- (19.5, 4.5);
    \draw[thick] (b4) -- (20.5, 5);
    \draw[thick] (b4) -- (21, 4);

    \node [ fill=red, scale=0.5] (Fv) at (17, 1.5) {};
    \node [ fill=red, scale=0.5] (FV) at (17, 3.5) {};
    \node [ fill=red, scale=0.5] (l1) at (15, 3.7) {};
    \node [ fill=red, scale=0.5] (l2) at (14, 4.2) {};
    \node [ fill=red, scale=0.5] (r1) at (19, 3.7) {};
    \node [ fill=red,scale=0.5] (r2) at (20, 4.2) {};
    \node [ fill=red, scale=0.5] (r3) at (21, 4.7) {};
    \draw[ultra thick, red, -Stealth] (Fv) -- (FV);

    \end{tikzpicture}
    \caption{A face in $G$ corresponding to a leaf can be taken to be a boundary leaf face after applying local moves.}
    \label{fig:leaf-face}
\end{figure}
 
We apply the trivalent version of local move (b) beginning from the bottom of the picture so that each step, the edge separating $F$ from the boundary face is involved. This is pictured in the first step in Figure \ref{fig:leaf-face} where the edges involved in the move are colored blue. Observe that the number of edges along the boundary of $F$ decreases each time this move is applied. Therefore, after repeatedly applying this move along the boundary of $F$ until it is no longer possible, $F$ will be a boundary leaf face.
\end{proof}

\begin{thm}\label{thm:welldef}
Let $G$ be a connected plabic graph, and suppose the quiver $Q_G$ of $G$ is a forest quiver. Then $P(L_G^\plab) = f(Q_G)$.
\end{thm}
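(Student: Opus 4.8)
The plan is to induct on the number $n$ of vertices of $Q_G$ (equivalently, the number of interior faces of $G$), using the skein relation of Lemma \ref{skeinrelation} to mirror the leaf-removal clause of Definition \ref{def: HOMFLY of forest}, and the connected-sum multiplicativity supplied by Proposition \ref{prop: disconnected-quiver-connected-sum} together with the product clause to mirror the disjoint-union clause. For the base cases, when $n=0$ the graph $G$ has no interior faces and $L_G^\plab$ is an unknot, so $P(L_G^\plab)=1=f(Q_G)$; when $n=1$ the graph $G$ has a single interior face and $L_G^\plab$ is the positive Hopf link, whose HOMFLY polynomial one computes directly from the skein relation (\ref{Homflyskein}) to be $\frac{z+z^{-1}}{a}-\frac{z^{-1}}{a^3}=f(A_1)$.

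For the inductive step, suppose $n\ge 2$ and that the statement holds for all connected plabic graphs with fewer than $n$ interior faces. I would split into two cases according to whether $Q_G$ is connected. If $Q_G$ is disconnected, write $Q_G=Q_1\sqcup\cdots\sqcup Q_k$ with each $Q_i$ a non-empty tree and $k\ge 2$. Proposition \ref{prop: disconnected-quiver-connected-sum} exhibits $L_G^\plab$ as a connected sum $L_{G_1}^\plab\#\cdots\# L_{G_k}^\plab$ of links of connected plabic graphs $G_i$ with $Q_{G_i}=Q_i$; since $P$ is multiplicative under connected sums and each $Q_i$ has fewer than $n$ vertices, the inductive hypothesis and the product clause of Definition \ref{def: HOMFLY of forest} give $P(L_G^\plab)=\prod_i P(L_{G_i}^\plab)=\prod_i f(Q_i)=f(Q_G)$.

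If $Q_G$ is connected, it is a tree with a leaf $v$ adjacent to a vertex $\tilde v$. After making $G$ trivalent and applying Proposition \ref{prop:blf}, I may assume the face $F$ corresponding to $v$ is a boundary leaf face; since local moves preserve both $Q_G$ and the isotopy type of $L_G^\plab$, this changes neither side of the desired identity. Let $x,y$ be the vertices of $F$ and $e$ the edge separating $F$ from a boundary face, and set $G'=G-e$ and $G''=G-\{x,y\}$ as in Lemma \ref{skeinrelation}. Rearranging the skein relation (\ref{blfskein}) yields
\begin{equation*}
P(L_G^\plab)=\frac{z}{a}P(L_{G'}^\plab)+\frac{1}{a^2}P(L_{G''}^\plab).
\end{equation*}
The crux is to check that $G'$ and $G''$ are again connected plabic graphs with $Q_{G'}=Q_G-\{v\}$ and $Q_{G''}=Q_G-\{v,\tilde v\}$. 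Deleting $e$ merges $F$ into the adjacent boundary face, destroying only the quiver vertex $v$, so $Q_{G'}=Q_G-\{v\}$; deleting $x$ and $y$ additionally destroys the unique neighbouring interior face $F'$ of $F$, which corresponds to $\tilde v$, so $Q_{G''}=Q_G-\{v,\tilde v\}$. Granting connectivity, the inductive hypothesis applies to $G'$ (whose quiver is a tree on $n-1$ vertices) and to $G''$ (whose quiver is a forest on $n-2$ vertices), and substituting $P(L_{G'}^\plab)=f(Q_G-\{v\})$ and $P(L_{G''}^\plab)=f(Q_G-\{v,\tilde v\})$ into the displayed identity reproduces exactly the leaf-removal clause of Definition \ref{def: HOMFLY of forest}, giving $P(L_G^\plab)=f(Q_G)$.

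The main obstacle I anticipate is verifying the connectivity of $G''$; connectivity of $G'$ is immediate, since deleting $e$ leaves the second edge joining $x$ and $y$ intact. The key observation is that the boundary of the neighbouring face $F'$ is a cycle through $x$ and $y$, so the portion of $\partial F'$ running from $x$ to $y$ and avoiding both edges of $F$ provides a path, not passing through $x$ or $y$, between the two vertices adjacent to $x$ and $y$ outside $F$; any path of $G$ using $x$ or $y$ can therefore be rerouted inside $G''$, so $G''$ remains connected. This argument relies essentially on the existence of $\tilde v$, i.e. on $n\ge 2$, which is precisely why $A_1$ must be treated as a separate base case: for a single interior face the analogue of $G''$ is a two-component unlink rather than a connected graph. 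I would also need to confirm the orientation conventions so that the relevant crossings of $L_G^\plab$ are positive, ensuring the Hopf link appearing in the $n=1$ base case is the positive one.
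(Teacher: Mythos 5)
Your proposal follows essentially the same route as the paper's proof: induction on the number of vertices of $Q_G$, Proposition \ref{prop: disconnected-quiver-connected-sum} plus multiplicativity of $P$ under connected sums for the disconnected case, and Proposition \ref{prop:blf} plus Lemma \ref{skeinrelation} to mirror the leaf-removal clause in the connected case, with the same identifications $Q_{G'}=Q_G-\{v\}$ and $Q_{G''}=Q_G-\{v,\tilde v\}$ and the same connectivity argument for $G''$ via the remainder of $\partial F'$.

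There is, however, one genuine (if small) gap in your inductive step: the case $n=2$ with $Q_G=A_2$. You start the induction at $n\ge 2$ and flag only $n=1$ as problematic, but your connectivity argument for $G''$ --- ``the portion of $\partial F'$ running from $x$ to $y$ \ldots provides a path between the two vertices adjacent to $x$ and $y$ outside $F$'' --- presupposes that $x'\ne y$ and $y'\ne x$. When $Q_G=A_2$ the tail-reduced graph is two interior vertices joined by three edges, so $x'=y$ and $y'=x$, the face $F'$ has no other boundary vertices, and $G''=G-\{x,y\}$ is the \emph{empty} graph. Then $L_{G''}^\plab$ is the empty link, not the unknot that a connected plabic graph with empty quiver produces, so you cannot invoke the $n=0$ base case for it; the skein resolution at that crossing of the trefoil does give the unknot, but that requires a direct check rather than an appeal to the inductive hypothesis. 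The paper sidesteps this by verifying $A_2$ directly as a base case (the plabic link is the right-handed trefoil, with HOMFLY polynomial $\frac{z^2+2}{a^2}-\frac{1}{a^4}=f(A_2)$) and then, in the inductive step for $n\ge 3$, observing that $G''=\emptyset$ forces $Q_G=A_2$ and may therefore be excluded. Adding $A_2$ (and the two-isolated-vertices quiver) as explicit base cases repairs your argument and makes it coincide with the paper's.
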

\begin{proof}

Since $G$ is assumed to be connected and simple, we may assume that $G$ is trivalent. Let it be replaced by its tail reduction. We will proceed by induction on the number of vertices in $Q_G$.
    \par If $Q_G$ is the empty quiver, then by Proposition 6.1 of \cite{GLplabiclinks}, one has that plabic graph $G$ and its link $L_G^\plab$, drawn in blue, have the form pictured in Figure \ref{fig:empty quiver}. 
\begin{figure}
    \begin{center}
    \begin{tikzpicture}
        [dot/.style = {circle, draw=black, fill=white, minimum size=3pt, 
              inner sep=0pt, outer sep=0pt} ]  
              
[dot2/.style = {circle, draw=black, fill=white, minimum size=6pt, 
              inner sep=0pt, outer sep=0pt} ]                       
\draw[dotted] (0,0) circle (1);
\node (b1) [dot, minimum size =2pt, fill=black] at(-1, 0) {};
\node (b2) [dot,minimum size =2pt, fill=black] at(1, 0) {};
\node (e1) at (-1.1,0) {};
\node (e2) at (1.1,0) {};
\draw[blue, thick] [postaction={decorate, decoration={markings,
            mark=at position 0.275 with {\arrowreversed[blue]{stealth};}
        }}] (-1,0) to [out=90, in=90] (1,0) to [out=270, in =270] (-1,0);

\draw (b1) -- (b2);

\end{tikzpicture}
\caption{The tail reduction of a plabic graph with empty mutable quiver.}
\label{fig:empty quiver}
\end{center}
\end{figure}
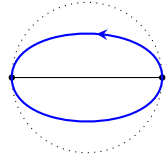
    The link $L_G^\plab$ is the unknot, so $P(L_G^\plab)=f(Q_G)=1$. If $Q_G$ consists of a single isolated vertex then by Proposition 6.1 of \cite{GLplabiclinks}, $G$ and its plabic link have the form pictured in Figure \ref{fig:A1quiver}.
    \begin{figure}
    \centering
        \begin{subfigure}[b]{0.3\textwidth}
        \centering
            \begin{tikzpicture}[scale=0.5, add arrow/.style={postaction={decorate}, decoration={ markings, mark=at position 0.5 with {\arrow[line width=3pt]{<}}}}]
        \draw[line width=0.3pt, dashed] (0,0) circle (3);
        \draw [fill] (-3,0) circle [radius = 0.05];
        \draw [fill] (-1.5,0) circle [radius = 0.1];
        \draw [thick] (-3,0) -- (-1.5,0);
        \draw [thick] (-1.5,0) to [out = 90, in = 180] (0.25, 1.25)  to [out=0, in =up] (2,0) to [out = down, in = 0] (0.25, -1.25) to [out=180, in = down] (-1.5,0);
        \draw [ultra thick, blue, rounded corners] 
         [postaction={decorate, decoration={markings,
            mark=at position 0.75 with {\arrow[blue]{stealth};}
        }}]
        (-1.25, 0) to [out=90, in = 100]  (1.75, 0) to [out=260, in=270] (-1.25, 0);
        \draw [red, ultra thick, rounded corners] 
        [postaction={decorate, decoration={markings,
            mark=at position 0.275 with {\arrowreversed[red]{stealth};}
        }}]
        (-3, 0) to [out = 90, in = 200] (-1.6, 0.6) to [out = 70, in=180] (0.25, 1.7) to [out=0, in = up] (2.2, 0) to [out = down, in = 0] (0.25, -1.7) to [out = 180, in = 290] (-1.6, -0.6) to [out = 160, in = 270]  (-3,0);
        \draw[red, fill=white] (0.375,-1.6) rectangle ++(-0.25cm, -0.25cm);
        \draw[blue, fill=white] (0.375,0.75) rectangle ++(-0.25cm, 0.25cm);

        \draw [-stealth, thick] (0, -3.5) -- (0,-4.5);
        \draw [white, decoration={markings, mark=at position 0.2 with {\arrowreversed[blue, very thick]{stealth}}},
    postaction={decorate}] (0.5, -6) circle (1);
    \draw [white, decoration={markings, mark=at position 0.4 with {\arrow[red, very thick]{stealth}}},
    postaction={decorate}] (-0.5, -7) circle (1.5);
        \begin{knot} [clip width=5, flip crossing = {2}]
            \strand[blue, ultra thick ] 
            (0.5, -6) circle (1);
            \strand[red, ultra thick] (-0.5, -7) circle (1.5);
        \end{knot}
    \end{tikzpicture}
        \end{subfigure}\hspace{1in}
        \begin{subfigure}[b]{0.3\textwidth}
        \centering
            \begin{tikzpicture}[scale=0.5, add arrow/.style={postaction={decorate}, decoration={
  markings,
  mark=at position 0.5 with {\arrow[line width=3pt]{<}}}}]
        \draw[line width=0.3pt, dashed] (0,0) circle (3);
        \node [circle, fill=white, draw=black, scale=0.3] (v) at (-1.5,0) {};
        \draw [fill] (-3,0) circle [radius = 0.05];
        \draw [thick] (-3,0) -- (v);
        \draw [thick] (v) to [out = 90, in = 180] (0.25, 1.25)  to [out=0, in =up] (2,0) to [out = down, in = 0] (0.25, -1.25) to [out=180, in = 270] (v);
        \draw [ultra thick, blue, rounded corners] 
         [postaction={decorate, decoration={markings,
            mark=at position 0.25 with {\arrowreversed[blue]{stealth};}
        }}]
        (-1.25, 0) to [out=90, in = 100]  (1.75, 0) to [out=260, in=270] (-1.25, 0);
        \draw [red, ultra thick, rounded corners] 
        [postaction={decorate, decoration={markings,
            mark=at position 0.70 with {\arrow[red]{stealth};}
        }}]
        (-3, 0) to [out = 270, in = 220] (-2, 0) to [out = 60, in =180] (0.25, 1.7) to [out=0, in = up] (2.2, 0) to [out = down, in = 0] (0.25, -1.7) to [out = 180, in = 300] (-2, 0) to [out = 120, in = 90]  (-3,0);
        \draw[red, fill=white] (0.375,1.6) rectangle ++(-0.25cm, 0.25cm);
        \draw[blue, fill=white] (0.375,-0.75) rectangle ++(-0.25cm, -0.25cm);

        \draw [-stealth, thick] (0, -3.5) -- (0,-4.5);
        \draw [white, decoration={markings, mark=at position 0.2 with {\arrow[blue, very thick]{stealth}}},
    postaction={decorate}] (0, -7.5) circle (1);
    \draw [white, decoration={markings, mark=at position 0.4 with {\arrowreversed[red, very thick]{stealth}}},
    postaction={decorate}] (0.5, -6.5) circle (1.5);
        \begin{knot} [clip width=5, flip crossing = {1}]
            \strand[blue, ultra thick ] 
            (0, -7.5) circle (1);
            \strand[red, ultra thick] (0.5, -6.5) circle (1.5);
        \end{knot}
    \end{tikzpicture}
        \end{subfigure}
        \caption{The only two possibilities for tail reduced graphs whose quiver is a single vertex.}
        \label{fig:A1quiver}
    \end{figure}
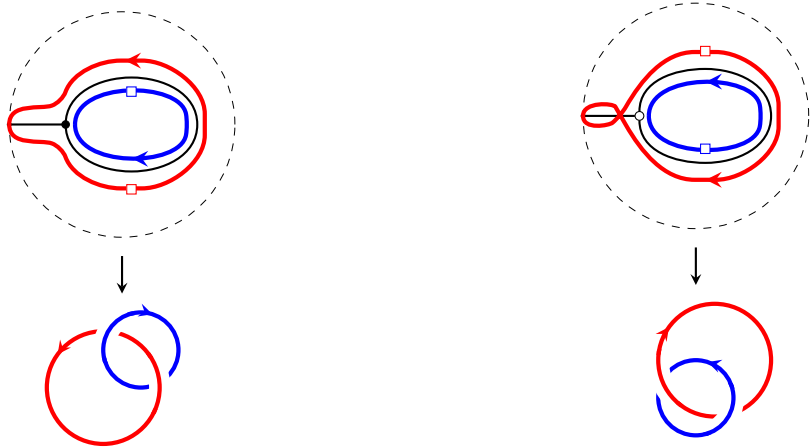
    In both cases, $L_G^\plab$ is the positively oriented Hopf link, so $P(L_G^\plab)=f(Q_G)= \frac{z+z^{-1}}{a}-\frac{z^{-1}}{a^3}$. If $n=2$ and $Q_G$ is connected, then the quiver is the $A_2$ quiver. By the definition of $f$,
    \begin{align*}
        f(A_2) &= \frac{z}{a}f(A_1)+\frac{1}{a^2}f(A_0) \\
        &= \frac{z^2+1}{a^2}-\frac{1}{a^4} +\frac{1}{a^2}\\
        &= \frac{z^2+2}{a^2}-\frac{1}{a^4}
    \end{align*}
    where $A_0$ denotes the empty quiver and $A_1$ consists of a single vertex. On the other hand, the corresponding plabic link is the right-handed trefoil (see Figure \ref{fig:trefoil}), whose HOMFLY polynomial is also $\frac{z^2+2}{a^2}-\frac{1}{a^4}$. 
    \begin{figure}
    \centering
    \begin{tikzpicture}
    \draw [line width=0.3pt, dashed] (-3,0) to [out=310, in=220] (3,0);

    \node [circle, fill=white, draw=black, scale=0.5] (w1) at (-1.5,0.5) {};
    \node [circle, fill=black, draw=black, scale=0.5] (b1) at (1.5,0.5) {};

    \draw[thick] (w1)--(b1);
    \draw[thick] (w1) to [out=90, in = 90] (b1) to [out=270, in =270] (w1);
    \begin{knot}[clip width=2, consider self intersections, end tolerance=0.03pt, flip crossing=2]
        \strand[blue, ultra thick] (0,-0.7) to [out=0, in = 270] (1.8, 0.5) to [out=90, in = 0] (0, 1.3) to [out=180, in =90] (-1.1, 0.9) to [out=270, in =80] (1.2, 0.1) to [out=260, in = 0] (0, -0.3) to [out=180, in = 270] (-1.7, 0.5) to [out=90, in = 180] (0, 1.6) to [out=0, in =90] (1.5, 1) to [out=270, in =30] (1, 0.6) to [out=210, in = 50] (-1.2, 0) to [out=230, in = 180] (0, -0.7);
    \end{knot}
    \draw[blue, thick, fill=white] (-0.1,-0.8) rectangle ++(0.2cm, 0.2cm);
    \draw[blue, thick, fill=white] (-0.1,1.5) rectangle ++(0.2cm, 0.2cm);
    \draw[blue, ultra thick, -stealth] (-1.7, 0.45) -- (-1.7,0.55);
    \draw[blue, ultra thick, -stealth] (1.8, 0.45) -- (1.8,0.55);
     \draw[thick, gray, -stealth] (3.5, 0) -- (4.5,0);
    \end{tikzpicture}
    \begin{tikzpicture}
    \draw [line width=0.3pt, dashed, white] (-2,0) to [out=310, in=220] (2,0);
        \begin{knot}[clip width=2, consider self intersections, end tolerance=0.03pt, flip crossing= 1, flip crossing=3]
        \strand[blue, ultra thick] (-1.2, 0.3) to [out=270, in = 270] (1, 0.5) to [out=90, in = 0] (0, 1.7) to [out=180, in =80] (-1.2, 1.2) to [out=260, in = 180] (1.2, 0.1) to [out=0, in=270] (1.6, 0.5) to [out=90, in = 90] (-1.2, 0.3);
        \draw[blue, ultra thick, -stealth] (1.6, 0.5) -- (1.6,0.6);
        \draw[blue, ultra thick, -stealth] (1, 0.55) -- (1,0.65);
    \end{knot}
    \end{tikzpicture}
    \caption{For a connected plabic graph whose quiver is $A_2$, the plabic link is the right-handed trefoil.}
    \label{fig:trefoil}
\end{figure}
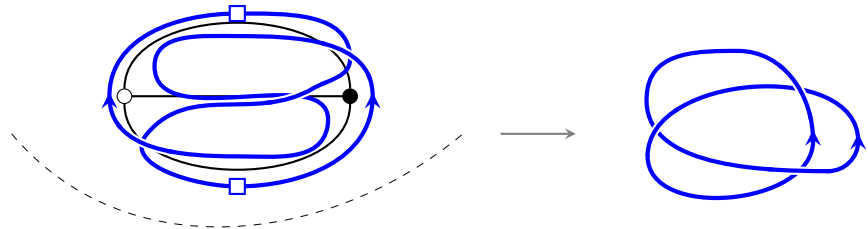
    If $n=2$ but $Q_G$ is disconnected, the quiver must consist of two isolated vertices. By Proposition \ref{prop: disconnected-quiver-connected-sum}, $L_G^\plab$ is a connected sum $L_{G_1}^\plab\# L_{G_2}^\plab$ where $G_1$ and $G_2$ are both connected plabic graphs whose quivers are each a single vertex. By the above portion of the proof, we must have that $P(L_{G_1}^\plab) = P(L_{G_2}^\plab) = \frac{z+z^{-1}}{a}-\frac{z^{-1}}{a^3} = f(Q_{G_1})=f(Q_{G_2})$. Therefore, $P(L_G^\plab) = P(L_{G_1}^\plab)\cdot P(L_{G_2}^\plab) = f(A_1)\cdot f(A_1) = f(Q_G)$. 
    \par For the inductive step, suppose that $Q_G$ is a forest quiver with $n \geq 3$ vertices. We will first handle the case where $Q_G$ is disconnected. Let us say that $Q_G = Q_1 \sqcup Q_2 \sqcup \dots \sqcup Q_k$ where $Q_1,\dots, Q_k$ are non-empty tree quivers. By the inductive hypothesis, given any connected plabic graphs $G_i$ with quiver $Q_i$ for $i=1,\dots, k$, the equality $P(L_{G_i}^\plab) = f(Q_i)$ holds. It then follows from Proposition \ref{prop: disconnected-quiver-connected-sum} that $P(L_G^\plab) = f(Q_1)\cdot f(Q_2) \cdot \dots \cdot f(Q_k) = f(Q_G)$.

    If on the other hand $Q_G$ is connected, then we pick a leaf $v$ in $Q_G$. Let $\tilde{v}$ be the vertex in $Q_G$ which is adjacent to $v$. By Proposition \ref{prop:blf}, this leaf $v$ can be taken to correspond to a boundary leaf face $F$ in $G$. Let $x$ and $y$ be the vertices on the boundary of the face $F$ and $e$ be the edge between $x$ and $y$ separating $F$ from a boundary face. Let $G'=G-e$ and $G''=G-\{x,y\}$. By Lemma \ref{skeinrelation},
    \begin{equation*}
        P(L_G^\plab)=\frac{z}{a}P(L_{G'}^\plab)+\frac{1}{a^2}P(L_{G''}^\plab).
    \end{equation*}
    \begin{figure}
        \centering
        \begin{subfigure}[b]{0.3\textwidth}
            \begin{tikzpicture}[scale=0.6]
                \draw [line width=0.3pt, dashed] (-3,1) to [out=310, in=220] (3,1);
                \node [circle, fill=white, draw=black, scale=0.6] (x) at (-1.5,2) {};
                \node [circle, fill=black, draw=black, scale=0.6] (y) at (1.5,2) {};
                \node [right] at (1.7,2) {\large$y$};
                \node [left] at (-1.7,2) {\large$x$};
                \node [circle, fill=black, draw=black, scale=0.5] (x') at (-2,3) {};
                \node [circle, fill=black, draw=black, scale=0.5] (y') at (2,3) {};
                \node [right] at (2.2,3) {\large$y'$};
                \node [left] at (-2.2,3) {\large$x'$};
                \draw (x') -- (x) -- (y) -- (y');
                \draw (x) to [out=300, in =180] (0, 0.8) to [out=0, in=240] (y);
                \node at (0, 1.15) { $F$};
                \draw (-2.4, 3.5) -- (x') -- (-2,3.5);
                \draw (2.4, 3.5) -- (y') -- (2,3.5);
                \node [below] at (0, 0.8) {\large $e$};
                \node [ fill=red, scale =0.6] (v) at (0, 1.65) {};
                \node [ fill=red, scale =0.6] (V) at (0, 3) {};
                \draw[ultra thick, red, -Stealth] (v) -- (V);
            \end{tikzpicture}
        \end{subfigure}\hspace{1in}
        \begin{subfigure}
            [b]{0.3\textwidth}
            \begin{tikzpicture}[scale=0.6]
                \draw [line width=0.3pt, dashed] (-3,1) to [out=310, in=220] (3,1);
                \node [circle, fill=black, draw=black, scale=0.5] (x') at (-2,3) {};
                \node [circle, fill=black, draw=black, scale=0.5] (y') at (2,3) {};
                \node [right] at (2.2,3) {\large$y'$};
                \node [left] at (-2.2,3) {\large$x'$};
                \draw (x') to [out=300, in=120] (-1.5,2) to  [out=300, in=240]  (1.5,2) to [out=60, in=240] (y');
                \draw (-2.4, 3.5) -- (x') -- (-2,3.5);
                \draw (2.4, 3.5) -- (y') -- (2,3.5);
                \node [ fill=red, scale =0.8] (v) at (0, 2.5) {};
            \end{tikzpicture}
        \end{subfigure}
        \caption{The graphs $G$ (left) and $G'$ (right).}
        \label{fig:primegraphs}
    \end{figure}
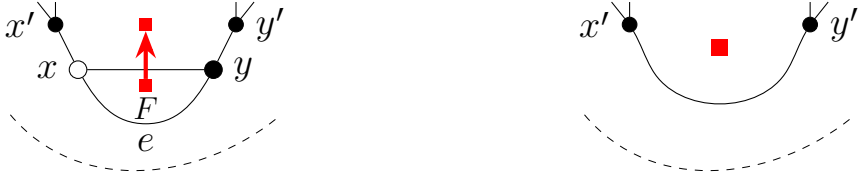
    Let us now consider each of the graphs $G'$ and $G''$ and their quivers. In $G'$, the vertices $x$ and $y$ have degree two. Therefore, we may remove these vertices using local move (c) without affecting the link or quiver. Thus, $G'$ can be taken to have the form pictured in Figure \ref{fig:primegraphs}, and $Q_{G'} = Q-\{v\}$. By the inductive hypothesis, $P(L_{G'}^\plab)=f(Q_{G'})=f(Q_G-\{v\})$.
    \par For $G''$, there are two potential cases to consider. To obtain this graph from $G$, we have removed $x$, $y$, the edges between them, and the edges between $x$ and $x'$ and between $y$ and $y'$. The first potential case is that $x=y'$ and $y='x$ so that $G''$ is empty. This would imply $Q_G=A_2$, which has already been handled above. Therefore, we may assume that $x\neq y'$ and $y\neq x'$. It follows that $G''$ is nonempty and still connected as $x'$ and $y'$ are still connected by the remainder of the boundary of $F'$. Therefore, by the inductive hypothesis $P(L_{G''}^\plab) = f(Q_{G''}) = f(Q_G-\{v, \tilde{v}\}) $. It now follows that
    \begin{align*}
        P(L_G^\plab) &= \frac{z}{a}P(L_{G'}^\plab)+\frac{1}{a^2}P(L_{G''}^\plab)\\
        &= \frac{z}{a}f(Q_G-\{v\}) +\frac{1}{a^2}f(Q_G-\{v,\tilde{v}\})\\
        &= f(Q_G).
    \end{align*}

\end{proof}

\begin{exmp}
    Recall the plabic graphs $G'$ and $G$ pictured in Figure \ref{fig:constructing graph}. The quiver of both graphs is $E_6$, so $$P(L_G^\plab) = P(L_{G'}^\plab) = f(E_6) = \frac{z^6 + 6z^4 + 10z^2 + 5}{a^6} - \frac{z^4 + 5z^2 + 5}{a^8} + \frac{1}{a^{10}}.$$
\end{exmp}

\section{A Closed Formula for the HOMFLY Polynomial and Related Invariants}

In this section we will prove a closed-form expression for the HOMFLY polynomial of a forest quiver and explore some implications of this result for other polynomial invariants.
\subsection{The Closed Formula}
Suppose $Q = Q_1 \sqcup Q_2 \sqcup \dots \sqcup Q_k$ is a forest quiver. In each connected component $Q_\ell$ of $Q$, fix a root vertex $v_\ell$. Let $R = \{v_1,\dots, v_k\}$ denote the set of these root vertices. Given an independent set $I$ in $Q$, let $p(I, R)$ be the number of vertices in $Q$ which are the parent of at least one vertex in $I$ when $Q$ is considered as a rooted forest with roots in $R$. Given $i,j \geq 0$, let $c_{i,j}(Q, R)$ be the number of independent sets $I$ in $Q$ of size $i$ with $p(I, R)=i-j$. We shall see that this quantity does not depend on the choice of root vertices in $R.$

\begin{lem}\label{lem: well def roots}
    Let $Q$ be a forest quiver. Suppose $R$ and $R'$ are two sets consisting of one choice of root vertex in each component of $Q$. Then for all $i,j \geq 0$ the equality $c_{i,j}(Q,R) = c_{i,j}(Q, R')$ holds.
\end{lem}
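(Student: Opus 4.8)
The plan is to reduce to a single tree and to re-rooting across a single edge, and then to produce an explicit size-preserving bijection on independent sets. First I would package the numbers $c_{i,j}$ into the generating function $C_{Q,R}(s,t)=\sum_{I}s^{|I|}t^{\,|I|-p(I,R)}$, where $I$ ranges over independent sets of $Q$; proving $c_{i,j}(Q,R)=c_{i,j}(Q,R')$ for all $i,j$ is exactly proving $C_{Q,R}=C_{Q,R'}$. Since no vertex is the parent of a vertex in a different component, the statistic $p$ is additive over the components of $Q$ and independent sets of $Q$ are products of independent sets of the components, so $C_{Q,R}=\prod_{\ell}C_{Q_\ell,v_\ell}$. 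It therefore suffices to treat a single tree $T$ rooted at $r$ and to show the generating function is unchanged when the root moves; and because any two vertices of $T$ are joined by a path, it is enough to prove invariance when the root moves from $r$ to an adjacent vertex $r'$.

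Fix the edge $e=\{r,r'\}$ and let $S_r$ (resp.\ $S_{r'}$) denote the neighbors of $r$ (resp.\ $r'$) other than $r'$ (resp.\ $r$). The key local observation is that re-rooting from $r$ to $r'$ changes the parent function only along $e$: it is unchanged at every vertex outside $\{r,r'\}$, while the parent of $r'$ passes from $r$ to ``none'' and the parent of $r$ passes from ``none'' to $r'$. Writing $A=\{\,\mathrm{parent}(w):w\in I\setminus\{r,r'\}\,\}$ for the common contribution of the rest of $I$, I would record the formulas $p(I,r)=|A|+\mathbf{1}[r'\in I]\,(1-\alpha)$ and $p(I,r')=|A|+\mathbf{1}[r\in I]\,(1-\beta)$, where $\alpha=\mathbf{1}[I\cap S_r\neq\emptyset]$ and $\beta=\mathbf{1}[I\cap S_{r'}\neq\emptyset]$ detect whether $r$ or $r'$ already lies in $A$.

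Comparing these formulas, and using independence so that $r$ and $r'$ are never both in $I$, shows that $p(I,r)=p(I,r')$ except on two explicit families: $\mathcal A$, the independent sets with $r\in I$, $r'\notin I$, and $I\cap S_{r'}=\emptyset$, and $\mathcal B$, those with $r'\in I$, $r\notin I$, and $I\cap S_r=\emptyset$. On all other independent sets the two statistics agree pointwise, so those sets contribute identically to $C_{T,r}$ and $C_{T,r'}$. To match the $\mathcal A\cup\mathcal B$ contributions I would use the swap $\sigma$ that replaces $r$ by $r'$ (resp.\ $r'$ by $r$) in $I$. One checks that $\sigma$ is an involution carrying $\mathcal A$ bijectively onto $\mathcal B$ — the conditions on $S_r$ and $S_{r'}$ are precisely what is needed for $\sigma(I)$ to stay independent and to land in the opposite family — that it preserves $|I|$, and that, since $\sigma$ leaves $A$ unchanged, $p(\sigma(I),r')=p(I,r)$ for every $I\in\mathcal A\cup\mathcal B$. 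Thus $\sigma$ identifies the two families' contributions to $C_{T,r}$ and $C_{T,r'}$, which finishes the single-edge step; composing along a path and then over components yields the lemma.

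The step I expect to be most delicate is the bookkeeping in the parent-count formulas: one must verify carefully that $A$ truly does not change under $\sigma$ and that the corrections $1-\alpha$ and $1-\beta$ (which track whether $r$ or $r'$ is already a parent via some other child) fall into place so that the shifts on $\mathcal A$ and on $\mathcal B$ cancel rather than reinforce. Confirming in each case that $\sigma(I)$ remains independent and lands in the opposite family is the other spot where the defining hypotheses on $S_r$ and $S_{r'}$ must be invoked precisely.
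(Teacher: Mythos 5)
Your proof is correct, and it takes a genuinely different route from the paper. The paper proves the lemma by induction on the number of vertices: it picks a leaf $v$ (chosen away from the two candidate roots when possible), establishes the deletion recursion $c_{i,j}(Q,u)=c_{i,j}(Q-\{v\},u)+c_{i-1,j}(Q-\{v,\tilde v\},R_u)$, and compares the two root choices via the inductive hypothesis, with separate cases depending on whether $\tilde v$ coincides with a root and a symmetry argument for the degenerate path case. You instead reduce to re-rooting across a single edge $\{r,r'\}$ and give a direct, induction-free bijection. Your key local observation --- that moving the root across one edge changes the parent function only at $r$ and $r'$ --- is correct (for any $w\notin\{r,r'\}$ the first step of the path toward $r$ and toward $r'$ is the same edge), your bookkeeping with $\alpha$, $\beta$, and the common parent set $A$ checks out, and the involution $\sigma$ swapping $r$ and $r'$ does carry $\mathcal A$ onto $\mathcal B$ while preserving $|I|$ and satisfying $p(\sigma(I),r')=p(I,r)$ (note that for $I\in\mathcal A$ the condition $I\cap S_r=\emptyset$ needed for $\sigma(I)\in\mathcal B$ is automatic from independence, since $r\in I$). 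What your approach buys is a cleaner, more conceptual proof that pinpoints exactly which independent sets feel the re-rooting and avoids the paper's case analysis; what the paper's approach buys is that the recursions it establishes along the way --- the product formula over components and the leaf-deletion identity --- are exactly the identities reused later in the proof of the closed formula (Theorem \ref{thm:homflyformula}), so its induction does double duty, whereas under your route those recursions would still need to be verified separately for that theorem.
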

\begin{proof}
    First we note that there is only one way to choose an independent set of size $0$. It follows that $c_{0,0}(Q,R)=c_{0,0}(Q,R')=1$ and $c_{0,j}(Q,R)=c_{0,j}(Q,R')=0$ for all $j>0$. Therefore, it remains only to prove the desired equality for $i\geq 1$ and $j \geq 0$. We will prove this by induction on the number $n$ of vertices in $Q$. The base cases where $n \leq 2$ are clear due to the symmetry of these quivers. 
    Assume now that $n \geq 3$.
    We first observe that given a quiver $Q$ with root vertices $R=\{v_1,\dots, v_k\}$, the equality
    \begin{equation}\label{eqn: over components}
    c_{i,j}(Q,R) = \sum\limits_{\substack{i_1,j_1,\dots, i_k,j_k \geq 0 \\ i_1+\dots +i_k=i,\, j_1+\dots +j_k=j}}c_{i_1,j_1}(Q_1, v_1)\dots c_{i_k,j_k}(Q_k, v_k)\end{equation}
    holds. The right-hand side of the equality simply ranges over all ways to allocate the number of vertices in an independent set and their parents between components of $Q$. Therefore, if $Q$ has multiple connected components, the statement follows by the inductive hypothesis. It remains only to prove the statement of the lemma in the case where $Q$ is a tree. That is, we wish to show that given a tree quiver $Q$ and any two vertices $u,w$ in $Q$, one has
    \begin{align}\label{tree equaltiy}
        c_{i,j}(Q,u) = c_{i,j}(Q,w).
    \end{align}
    Fix two such vertices $u$ and $w$ in a tree quiver $Q$. Without loss of generality, we can assume that there exists another leaf $v$ in $Q$ which is distinct from both $u$ and $w$. Since any tree has at least two leaves, the only situation in which it would be impossible to pick such a leaf $v$ would be if $Q$ were a path and $u$ and $w$ were the two leaves. In this case the equality (\ref{tree equaltiy}) holds by symmetry. Fix such a leaf $v$, and let $\tilde{v}$ by the vertex adjacent to $v$. There are two distinct cases to consider according to whether or not $\tilde{v}$ is the same as either $u$ or $w$. 
    
    First let us handle the case where neither $u$ nor $w$ is equal to $\tilde{v}$ so that all the vertices $u$, $w$, $v$, and $\tilde{v}$ are distinct. Let $Q' = Q-\{v\}$ and $Q'' = Q-\{v,\tilde{v}\}$. The quiver $Q'$ is a tree, and one can still choose either $u$ or $w$ to be the root in this quiver. However, it is possible that $Q''$ is disconnected, so we must choose root vertices in each component. Let $R_u$ consist of the vertex $u$ along with any other children of $\tilde{v}$ besides $v$ when $Q$ is rooted at $u$. Define $R_w$ similarly. By the inductive hypothesis we also have that the equalities
    \begin{align}\label{eqn: inductivewelldef}
        c_{i,j}(Q',u) = c_{i,j}(Q',w), \qquad c_{i,j}(Q'',R_u) = c_{i,j}(Q'',R_w)
    \end{align}
    hold for all $i,j \geq 0$.
    We next observe that for all $i\geq 1$ and $j \geq 0$, we have
    \begin{equation}\label{eqn:dividing indep sets}
        c_{i,j}(Q, u) = c_{i,j}(Q', u) + c_{i-1, j}(Q'', R_u), \quad
        c_{i,j}(Q, w) = c_{i,j}(Q', w) + c_{i-1, j}(Q'', R_w).
    \end{equation}
    To see this, we note that the sets counted by $c_{i,j}(Q,u)$ can be divided depending on whether or not they include $v$. Those that do not can be considered as independent sets in $Q'$, and when $Q'$ is rooted at $u$ the number of parent vertices remains the same. Therefore, the number of such independent sets is $c_{i,j}(Q', u)$. For independent sets $I$ which contain $v$, consider the independent set $I'=I-\{v\}$ of size $i-1$ in $Q''$. 
    \begin{figure}
        \centering
        \begin{tikzpicture}
            \begin{scope}
         \node [circle, draw=black, fill=green, scale =0.3] (u) at (0,0) {};
         \draw (0,0) circle (0.12);
     \node [circle, draw=black, fill=red, scale =0.3] (1) at (-0.5, -0.5) {};
     \node [circle, draw=black, fill=green, scale =0.3] (vtilde) at (0.5, -0.5) {};
     \node [circle, draw=black, fill=red, scale =0.3] (v) at (0, -1) {};
     \node [circle, draw=black, fill=green, scale =0.3] (v1) at (0.5, -1.25) {};
     \node [circle, draw=black, fill=red, scale =0.3] (v2) at (1, -1) {};
     \node [circle, draw=black, fill=red, scale =0.3] (2) at (0, -1.75) {};
     \node [circle, draw=black, fill=black, scale =0.3] (3) at (1, -1.75) {};
     \draw [very thick] (1)--(u)--(vtilde);
     \draw [very thick] (v)--(vtilde);
     \draw [very thick] (v1) -- (vtilde) -- (v2);
     \draw [very thick] (2) -- (v1) -- (3);
     \node at (0,0.25) {$u$};
     \node at (-0.25, -1) {$v$};
     \node at (0.85, -0.45) {$\tilde{v}$};
     \node at (1.35, -1) {$v_2$};
     \node at (0.8, -1.25) {$v_1$};
        \end{scope}
   \begin{scope}[xshift = 4cm]
         \node [circle, draw=black, fill=green, scale =0.3] (u) at (0,0) {};
         \draw (0,0) circle (0.12);
     \node [circle, draw=black, fill=red, scale =0.3] (1) at (-0.5, -0.5) {};
     \node [circle, draw=black, fill=green, scale =0.3] (v1) at (0.25, -1) {};
     \draw (0.25,-1) circle (0.12);
     \node [circle, draw=black, fill=red, scale =0.3] (v2) at (1, -1) {};
     \draw (1,-1) circle (0.12);
     \node [circle, draw=black, fill=red, scale =0.3] (2) at (-0.25, -1.5) {};
     \node [circle, draw=black, fill=black, scale =0.3] (3) at (0.75, -1.5) {};
     \draw [very thick] (1)--(u);
     \draw [very thick] (2) -- (v1) -- (3);
     \node at (0,0.25) {$u$};
     \node at (1.35, -1) {$v_2$};
     \node at (-0.1, -1) {$v_1$};
        \end{scope}
        \end{tikzpicture}

        \caption{An independent set $I$ in $Q$ (left) which contains the leaf $v$ along with the corresponding independent set $I'$ in $Q''$ (right). The root $u$ in $Q$ and the roots in $R_u$ in $Q''$ are circled, elements of the independent sets are colored red, and the parent vertices are colored green.}
        \label{fig:case1well-def}
    \end{figure}
    We note the parent vertices of the vertices in $I'$ in $Q''$ are exactly the parent vertices of the vertices in $I$ except for $\tilde{v}$. See Figure \ref{fig:case1well-def} for an example. This implies that $I'$ is an independent set of size $i-1$ with $i-j-1$ parent vertices. This map from independent sets in $Q$ containing $v$ and independent sets in $Q''$ is easily reversible. It follows that the number of independent sets counted by $c_{i,j}(Q,u)$ that do contain $v$ is exactly $c_{i-1, j}(Q'', R_u)$. We conclude that the equations in (\ref{eqn:dividing indep sets}) hold. Combining these with (\ref{eqn: inductivewelldef}), we find that
    \begin{align*}
        c_{i,j}(Q,u) &= c_{i,j}(Q', u) + c_{i-1, j}(Q'', R_u)\\
        &= c_{i,j}(Q', w) + c_{i-1, j}(Q'', R_w) \\
        &= c_{i,j}(Q,w).
    \end{align*}

    Now we will handle the case where $\tilde{v}$ is equal to one of $u$ or $v$. Without loss of generality, say $\tilde{v}=u$. This case will proceed similarly to the case where $u$, $w$, $v$, and $\tilde{v}$ are all distinct. We will once again consider the quivers $Q' = Q-\{v\}$ and $Q'' = Q-\{v, \tilde{v}\}$. The quiver $Q'$ is still a tree and can still be rooted at either $u=\tilde{v}$ or $w$. In $Q''$, the vertex $u$ has been removed and can no longer be used as a root. Therefore, we will instead choose the set of roots in each component of $Q''$, denoted $R_u$, to be the set of children $v_i \neq v$ of $u=\tilde{v}$ in $Q$ when rooted at $u$. See Figure \ref{fig:case2welldef} for an example. We will define $R_w$, as in the previous case, to be the set of $w$ together with each child $v_i \neq v$ of $u=\tilde{v}$ in $Q$ when rooted at $w$. 
    \begin{figure}
        \centering
        \begin{tikzpicture}
            \begin{scope}
     \node [circle, draw=black, fill=green, scale =0.3] (vtilde) at (0.5, -0.5) {};
     \draw (0.5, -0.5) circle (0.12);
     \node [circle, draw=black, fill=red, scale =0.3] (v) at (0, -1) {};
     \node [circle, draw=black, fill=green, scale =0.3] (v1) at (0.5, -1.25) {};
     \node [circle, draw=black, fill=red, scale =0.3] (v2) at (1, -1) {};
     \node [circle, draw=black, fill=red, scale =0.3] (2) at (0.25, -1.75) {};
     \node [circle, draw=black, fill=black, scale =0.3] (3) at (0.75, -1.75) {};
     \node [circle, draw=black, fill=green, scale =0.3] (4) at (1.5, -1.5) {};
      \node [circle, draw=black, fill=black, scale =0.3] (5) at (1.25, -2) {};
       \node [circle, draw=black, fill=red, scale =0.3] (6) at (1.75, -2) {};
     \draw [very thick] (v)--(vtilde);
     \draw [very thick] (v1) -- (vtilde) -- (v2);
     \draw [very thick] (2) -- (v1) -- (3);
     \draw [very thick] (v2) -- (4);
     \draw [very thick] (5) -- (4)-- (6);
     \node at (-0.25, -1) {$v$};
     \node at (0.5, -0.25) {$u=\tilde{v}$};
     \node at (1.35, -1) {$v_2$};
     \node at (0.8, -1.25) {$v_1$};
        \end{scope}
        \begin{scope}[xshift = 4cm]
     \node [circle, draw=black, fill=green, scale =0.3] (v1) at (0.5, -1.25) {};
     \draw (0.5, -1.25) circle (0.12);
     \node [circle, draw=black, fill=red, scale =0.3] (v2) at (1.5, -1.15) {};
     \draw (1.5, -1.15) circle (0.12);
     \node [circle, draw=black, fill=red, scale =0.3] (2) at (0.25, -1.75) {};
     \node [circle, draw=black, fill=black, scale =0.3] (3) at (0.75, -1.75) {};
     \node [circle, draw=black, fill=green, scale =0.3] (4) at (1.5, -1.5) {};
      \node [circle, draw=black, fill=black, scale =0.3] (5) at (1.25, -2) {};
       \node [circle, draw=black, fill=red, scale =0.3] (6) at (1.75, -2) {};

     \draw [very thick] (2) -- (v1) -- (3);
     \draw [very thick] (v2) -- (4);
     \draw [very thick] (5) -- (4)-- (6);
     \node at (1.85, -1.15) {$v_2$};
     \node at (0.85, -1.25) {$v_1$};
        \end{scope}
        \end{tikzpicture}
        \caption{An independent set $I$ in $Q$ (left) which contains the leaf $v$ along with the corresponding independent set $I'$ in $Q''$ (right) in the case where $u=\tilde{v}$. The root $u=\tilde{v}$ in $Q$ and the roots in $R_u$ in $Q''$ are circled, elements of the independent sets are colored red, and the parent vertices are colored green.}
        \label{fig:case2welldef}
    \end{figure}
    
    By the inductive hypothesis the equations in (\ref{eqn: inductivewelldef}) still hold. By a similar logic to that used in the case above, we see that the equations in (\ref{eqn:dividing indep sets}) hold as well and conclude that $c_{i,j}(Q,u) = c_{i,j}(Q,w)$.
    \end{proof} 

Given the invariance of $c_{i,j}(Q,R)$ under the choice of roots, we will generally omit the set of roots $R$ from the notation and simply write $c_{i,j}(Q)$. Using this, we can now state a closed formula for the HOMFLY polynomial of $Q$.

\begin{thm}\label{thm:homflyformula}
Given a forest quiver $Q$ with $n$ vertices, the HOMFLY polynomial of $Q$ is given by
    \begin{equation}\label{eqn:homflyformula}
        f(Q) =\frac{1}{a^n}\left(\sum\limits_{i,j \geq 0} c_{i,j}(Q) z^{n-2i} (1- a^{-2})^j \right)
    \end{equation}
\end{thm}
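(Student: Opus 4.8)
The plan is to prove (\ref{eqn:homflyformula}) by induction on the number $n$ of vertices of $Q$, showing that the right-hand side of (\ref{eqn:homflyformula}) obeys exactly the recursive relations that define $f$ in Definition \ref{def: HOMFLY of forest}. Write $F(Q)$ for the right-hand side. Since $f$ is determined by those relations (Proposition \ref{prop: HOMFLY well-defined}), it suffices to check that $F$ satisfies them and agrees with $f$ on the base cases.

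First I would verify the base cases. When $Q$ is empty the only independent set is $\emptyset$, so $c_{0,0}(Q)=1$ and all other $c_{i,j}$ vanish, giving $F(Q)=1=f(Q)$. When $Q$ is a single vertex the independent sets are $\emptyset$, contributing $c_{0,0}=1$, and the vertex itself, which as a root has no parent and hence contributes $c_{1,1}=1$; then $F(Q)=\frac{1}{a}\left(z+z^{-1}(1-a^{-2})\right)=\frac{z+z^{-1}}{a}-\frac{z^{-1}}{a^3}=f(Q)$. For $n\geq 2$ I would treat the two clauses of the definition separately. In the disconnected case $Q=Q_1\sqcup Q_2$ with $n_i=|Q_i|$, substitute the convolution identity (\ref{eqn: over components}) into $F(Q)$ and observe that because $n-2i=(n_1-2i_1)+(n_2-2i_2)$ and $j=j_1+j_2$ split additively while $a^{-n}=a^{-n_1}a^{-n_2}$, the double sum factors as $F(Q_1)F(Q_2)$, which by the inductive hypothesis equals $f(Q_1)f(Q_2)=f(Q)$.

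In the connected case I pick a leaf $v$ with neighbor $\tilde v$ and set $Q'=Q-\{v\}$ and $Q''=Q-\{v,\tilde v\}$. The key input is the counting recursion $c_{i,j}(Q)=c_{i,j}(Q')+c_{i-1,j}(Q'')$, which is precisely equation (\ref{eqn:dividing indep sets}) from the proof of Lemma \ref{lem: well def roots}: the choice of root is irrelevant by that lemma, and the identity extends to $i=0$ under the convention $c_{-1,j}=0$. Substituting this recursion into $F(Q)$ splits the sum into two pieces. In the first, using $n=(n-1)+1$ one factors $z^{n-2i}=z\cdot z^{(n-1)-2i}$ and $a^{-n}=a^{-1}a^{-(n-1)}$ to obtain $\frac{z}{a}F(Q')$. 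In the second, reindexing $i\mapsto i-1$ turns $z^{n-2i}$ into $z^{(n-2)-2(i-1)}$, and with $a^{-n}=a^{-2}a^{-(n-2)}$ this yields $\frac{1}{a^2}F(Q'')$. Hence $F(Q)=\frac{z}{a}F(Q')+\frac{1}{a^2}F(Q'')$, and by induction $F(Q')=f(Q')$ and $F(Q'')=f(Q'')$, so $F(Q)=f(Q)$ by the defining recursion for $f$.

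The computations are routine index bookkeeping, and the genuine content — the behavior of $c_{i,j}$ under removal of a leaf — has already been established inside Lemma \ref{lem: well def roots}. I therefore expect no serious obstacle; the only points requiring care are organizing the reindexing cleanly and confirming that the edge case $i=0$ and the possible disconnectedness of $Q''$ are both covered by the inductive hypothesis, which they are since $F$ and $c_{i,j}$ are defined for arbitrary forests with fewer than $n$ vertices.
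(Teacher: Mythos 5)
Your proposal is correct and follows essentially the same route as the paper: induction on $n$, with the disconnected case handled by the convolution identity (\ref{eqn: over components}) and the connected case reduced to the leaf-removal recursion $c_{i,j}(Q)=c_{i,j}(Q')+c_{i-1,j}(Q'')$ from (\ref{eqn:dividing indep sets}), both already established in the proof of Lemma \ref{lem: well def roots}. The only cosmetic differences are your choice of base cases ($n=0,1$ rather than $n=1,2$) and your explicit $c_{-1,j}=0$ convention, neither of which changes the argument.
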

\begin{proof}
    We will first verify that this formula satisfies the condition that $f(Q)=\prod\limits_{\ell=1}^k f(Q_\ell)$ for a forest quiver $Q = Q_1 \sqcup Q_2 \sqcup \dots \sqcup Q_k$.
    Let $n_\ell$ be the number of vertices in the connected component $Q_\ell$ for $\ell=1,\dots, k$. We must verify that
    \begin{align*}
       \frac{1}{a^n}\left(\sum\limits_{i,j \geq 0} c_{i,j}(Q) z^{n-2i} (1- a^{-2})^j \right) &=  \prod\limits_{\ell=1}^k \frac{1}{a^{n_\ell}}\left(\sum\limits_{i,j \geq 0} c_{i,j}(Q_\ell) z^{n_\ell-2i} (1- a^{-2})^j \right).
    \end{align*}
    By re-expressing the right hand side of the above equation as 
    \begin{align*}
        \frac{1}{a^{n_1+n_2+\dots +n_k}} &\sum\limits_{i_1,j_1,\dots, i_k,j_k \geq 0} c_{i_1,j_1}(Q_1)\dots c_{i_k,j_k}(Q_k)z^{(n_1+n_2+\dots n_k)-2(i_1+i_2+\dots+i_k)}(1-a^{-2})^{j_1+j_2+\dots+j_k}
        \\&= \frac{1}{a^{n}} \sum\limits_{i_1,j_1,\dots, i_k,j_k \geq 0} c_{i_1,j_1}(Q_1)\dots c_{i_k,j_k}(Q_k)z^{n-2(i_1+i_2+\dots+i_k)}(1-a^{-2})^{j_1+j_2+\dots+j_k}
    \end{align*}
    we see that it suffices to show that for all $i,j \geq 0$ the equality 
    \begin{align*}
        c_{i,j}(Q) = \sum\limits_{\substack{i_1,j_1,\dots, i_k,j_k \geq 0 \\ i_1+\dots +i_k=i,\, j_1+\dots +j_k=j}}c_{i_1,j_1}(Q_1)\dots c_{i_k,j_k}(Q_k)
    \end{align*}
    holds. This is the same as the equality (\ref{eqn: over components}) which we established in the proof of Lemma \ref{lem: well def roots}.

    Now we will proceed by induction on the number of vertices in $Q$. If $n=1$, then we have $f(A_1)= \frac{z+z^{-1}}{a}-\frac{z^{-1}}{a^3}$. Meanwhile, since $Q$ consists of a single vertex, the only two independent sets are the set of zero vertices and the set consisting of the only vertex. It follows that the only non-zero coefficients $c_{i.j}(Q)$ are $c_{0,0}(Q)=1$ and $c_{1,1}(Q)=1$. Therefore, the formula in (\ref{eqn:homflyformula}) becomes $ \frac{1}{a^1}\left( z^1(1-a^{-2})^0 + z^{-1}(1-a^{-2})  \right) = \frac{z+z^{-1}}{a}
    -\frac{z^{-1}}{a^3}$. 
    
    When $n=2$ and $Q$ consists of two disconnected vertices, then the above argument shows that the formula in (\ref{eqn:homflyformula}) agrees with $f(A_1)\cdot f(A_1)$ as expected. When $n=2$ and $Q=A_2$, the HOMFLY polynomial is $f(A_2)= \frac{z^2+2}{a^2}-\frac{1}{a^4}$. On the other hand, there are three independent sets in $Q$. They are the set of zero vertices, the set consisting of the non-root vertex, and the set consisting of the root vertex. Therefore, the only non-zero coefficients are $c_{0,0}(Q) = 1$, $c_{1,0}(Q)=1$, and $c_{1,1}(Q)=1$. We can then verify that
    \begin{align*}
         \frac{1}{a^2}\left(\sum\limits_{i,j \geq 0} c_{i,j}(A_2) z^{2-2i} (1- a^{-2})^j \right) &= \frac{1}{a^2} \left( 1\cdot z^2 + 1\cdot z^0 (1-a^{-2})^0 + 1\cdot z^0(1-a^{-2})^1 \right)\\
         &= \frac{1}{a^2}\left(z^2+1+1-a^{-2} \right)\\
         &=\frac{z^2+2}{a^2}-\frac{1}{a^4}.
    \end{align*}

    Now assume that $Q$ has $n\geq 3$ vertices. Since we have handled the case where $Q$ is disconnected above, we may assume that $Q$ is connected. Fix a leaf $v$ in $Q$ which is incident to a vertex $\tilde{v}$. We will choose a root vertex $u$ which is not $v$ or $\tilde{v}$. By the inductive hypothesis, we assume that the formula in (\ref{eqn:homflyformula}) holds for $Q' = Q-\{v\}$ and $Q''=Q-\{v, \tilde{v}\}$. That is,

    $$f(Q') = \frac{1}{a^{n-1}}\left(\sum\limits_{i,j \geq 0} c_{i,j}(Q') z^{n-1-2i} (1- a^{-2})^j \right)$$
    and
    $$f(Q'') = \frac{1}{a^{n-2}}\left(\sum\limits_{i,j \geq 0} c_{i,j}(Q'') z^{n-2(i+1)} (1- a^{-2})^j \right).$$
    Since $f(Q) = \frac{z}{a}f(Q') +\frac{1}{a^2}f(Q'')$ it remains to show that
    \begin{align*}
        \frac{1}{a^n}\left(\sum\limits_{i,j \geq 0} c_{i,j}(Q) z^{n-2i} (1- a^{-2})^j \right) &= \frac{z}{a^{n}}\left(\sum\limits_{i,j \geq 0} c_{i,j}(Q') z^{n-1-2i} (1- a^{-2})^j \right) \\
        &\qquad + \frac{1}{a^{n}}\left(\sum\limits_{i,j \geq 0} c_{i,j}(Q'') z^{n-2(i+1)} (1- a^{-2})^j \right)\\
        &= \frac{1}{a^{n}}\left(\sum\limits_{i,j \geq 0} c_{i,j}(Q') z^{n-2i} (1- a^{-2})^j \right) \\
        &\qquad + \frac{1}{a^{n}}\left(\sum\limits_{i \geq 1, \; j \geq 0} c_{i-1,j}(Q'') z^{n-2i} (1- a^{-2})^j \right)\\
        &= \frac{1}{a^n}\left(z^n + \sum\limits_{i \geq 1, \; j \geq 0} \left(c_{i,j}(Q')+c_{i-1,j}(Q'')\right) z^{n-2i} (1- a^{-2})^j \right)
    \end{align*}
    Since $c_{0,0}(Q)=1$, the coefficients on the $\frac{z^n}{a^n}$ terms agree. It remains to show that for $i\geq 1$ and $j \geq 0$ one has $$c_{i,j}(Q) = c_{i,j}(Q')+c_{i-1,j}(Q'').$$
    This follows from the equations in (\ref{eqn:dividing indep sets}) which we established in the proof of Lemma \ref{lem: well def roots}.
\end{proof}

\begin{figure}
         \begin{center}
         \begin{tikzpicture}[scale =0.45]
        \begin{scope}
            \node () at (-3.5,0.5) {$c_{0,0}(D_4)=1$};
         \node [circle, draw=black, fill=black, scale =0.3] (1) at (0,0.25) {};
     \node [circle, draw=black, fill=black, scale =0.3] (2) at (1, 0.5) {};
     \node [circle, draw=black, fill=black, scale =0.3] (3) at (2, 0.25) {};
     \node [circle, draw=black, fill=black, scale =0.3] (6) at (1, 1.25) {};
     \draw [thin, gray] (1,1.25) circle (0.3);
     \draw [very thick] (1)--(2)--(3);
     \draw [very thick] (2)--(6);   
        \end{scope}
        \draw [very thick, gray, dashed]  (-6.5, -0.5) -- (10.5, -0.5);
        \begin{scope}[yshift = -3cm]
        \begin{scope}
        \begin{scope}
            \node () at (-3.5,0.5) {$c_{1,0}(D_4)=3$};
         \node [circle, draw=black, fill=red, scale =0.3] (1) at (0,0.25) {};
     \node [circle, draw=black, fill=green, scale =0.3] (2) at (1, 0.5) {};
     \node [circle, draw=black, fill=black, scale =0.3] (3) at (2, 0.25) {};
     \node [circle, draw=black, fill=black, scale =0.3] (6) at (1, 1.25) {};
     \draw [thin, gray] (1,1.25) circle (0.3);
     \draw [very thick] (1)--(2)--(3);
     \draw [very thick] (2)--(6);   
        \end{scope}
        \draw [very thick, gray, dashed]  (-6.5, -0.5) -- (10.5, -0.5);
        \end{scope}

        \begin{scope}[xshift = 4cm]
         \begin{scope}
         \node [circle, draw=black, fill=black, scale =0.3] (1) at (0,0.25) {};
     \node [circle, draw=black, fill=red, scale =0.3] (2) at (1, 0.5) {};
     \node [circle, draw=black, fill=black, scale =0.3] (3) at (2, 0.25) {};
     \node [circle, draw=black, fill=green, scale =0.3] (6) at (1, 1.25) {};
     \draw [thin, gray] (1,1.25) circle (0.3);
     \draw [very thick] (1)--(2)--(3);
     \draw [very thick] (2)--(6);   
        \end{scope}
        \end{scope}

        \begin{scope}[xshift = 8cm]
         \begin{scope}
         \node [circle, draw=black, fill=black, scale =0.3] (1) at (0,0.25) {};
     \node [circle, draw=black, fill=green, scale =0.3] (2) at (1, 0.5) {};
     \node [circle, draw=black, fill=red, scale =0.3] (3) at (2, 0.25) {};
     \node [circle, draw=black, fill=black, scale =0.3] (6) at (1, 1.25) {};
     \draw [thin, gray] (1,1.25) circle (0.3);
     \draw [very thick] (1)--(2)--(3);
     \draw [very thick] (2)--(6);   
        \end{scope}
        \end{scope}

        \end{scope}

    \begin{scope}[ yshift = -6cm]
            \begin{scope}
        \begin{scope}
            \node () at (-3.5,0.5) {$c_{1,1}(D_4)=1$};
         \node [circle, draw=black, fill=black, scale =0.3] (1) at (0,0.25) {};
     \node [circle, draw=black, fill=black, scale =0.3] (2) at (1, 0.5) {};
     \node [circle, draw=black, fill=black, scale =0.3] (3) at (2, 0.25) {};
     \node [circle, draw=black, fill=red, scale =0.3] (6) at (1, 1.25) {};
     \draw [thin, gray] (1,1.25) circle (0.3);
     \draw [very thick] (1)--(2)--(3);
     \draw [very thick] (2)--(6);   
        \end{scope}
        \end{scope}
    \end{scope}

    \begin{scope}[ xshift = 17.5cm]
             \begin{scope}
        \begin{scope}
            \node () at (-3.5,0.5) {$c_{2,1}(D_4)=3$};
         \node [circle, draw=black, fill=red, scale =0.3] (1) at (0,0.25) {};
     \node [circle, draw=black, fill=green, scale =0.3] (2) at (1, 0.5) {};
     \node [circle, draw=black, fill=red, scale =0.3] (3) at (2, 0.25) {};
     \node [circle, draw=black, fill=black, scale =0.3] (6) at (1, 1.25) {};
     \draw [thin, gray] (1,1.25) circle (0.3);
     \draw [very thick] (1)--(2)--(3);
     \draw [very thick] (2)--(6);   
        \end{scope}
        \end{scope}

        \begin{scope}[xshift = 4cm]
         \begin{scope}
         \node [circle, draw=black, fill=red, scale =0.3] (1) at (0,0.25) {};
     \node [circle, draw=black, fill=green, scale =0.3] (2) at (1, 0.5) {};
     \node [circle, draw=black, fill=black, scale =0.3] (3) at (2, 0.25) {};
     \node [circle, draw=black, fill=red, scale =0.3] (6) at (1, 1.25) {};
     \draw [thin, gray] (1,1.25) circle (0.3);
     \draw [very thick] (1)--(2)--(3);
     \draw [very thick] (2)--(6);   
        \end{scope}
        \end{scope}

        \begin{scope}[xshift = 8cm]
         \begin{scope}
         \node [circle, draw=black, fill=black, scale =0.3] (1) at (0,0.25) {};
     \node [circle, draw=black, fill=green, scale =0.3] (2) at (1, 0.5) {};
     \node [circle, draw=black, fill=red, scale =0.3] (3) at (2, 0.25) {};
     \node [circle, draw=black, fill=red, scale =0.3] (6) at (1, 1.25) {};
     \draw [thin, gray] (1,1.25) circle (0.3);
     \draw [very thick] (1)--(2)--(3);
     \draw [very thick] (2)--(6);   
        \end{scope}
        \end{scope}
        \draw [very thick, gray, dashed]  (-6.5, -0.5) -- (10.5, -0.5);
    \end{scope}

     \begin{scope}[ xshift = 17.5cm, yshift=-3cm]
             \begin{scope}
        \begin{scope}
            \node () at (-3.5,0.5) {$c_{3,2}(D_4)=1$};
         \node [circle, draw=black, fill=red, scale =0.3] (1) at (0,0.25) {};
     \node [circle, draw=black, fill=green, scale =0.3] (2) at (1, 0.5) {};
     \node [circle, draw=black, fill=red, scale =0.3] (3) at (2, 0.25) {};
     \node [circle, draw=black, fill=red, scale =0.3] (6) at (1, 1.25) {};
     \draw [thin, gray] (1,1.25) circle (0.3);
     \draw [very thick] (1)--(2)--(3);
     \draw [very thick] (2)--(6);   
        \end{scope}
        \end{scope}
       
        \draw [very thick, gray, dashed]  (-6.5, -0.5) -- (10.5, -0.5);
    \end{scope}
    \end{tikzpicture}
    \end{center}
    \caption{The subsets counted by the coefficients $c_{i,j}(D_4)$. The root vertex is circled, elements of the independent sets are colored red, and the parent vertices are colored green.}
    \label{fig:D4 homfly}
\end{figure}
\begin{exmp}
    Consider the Dynkin diagram $D_4$. The nonzero coefficients $c_{i,j}(D_4)$ are shown in Figure \ref{fig:D4 homfly}. The HOMFLY polynomial $f(D_4)$ is
    \begin{align*}
        f(D_4) &= \frac{1}{a^4}\left(z^4+3z^2+z^2(1-a^{-2})+3(1-a^{-2})+z^{-2}(1-a^{-2})^2\right)\\
        &=\frac{z^4 + 4z^2 +  3 +z^{-2} }{a^4} -\frac{z^2 +3 + 2z^{-2}}{a^6} + \frac{z^{-2}}{a^8}.
    \end{align*} 
\end{exmp}

\subsection{The Point Count $R$-polynomial}
In \cite{lam2021cohomologyclustervarietiesii}, Lam and Speyer studied the point count of acyclic cluster varieties over finite fields.

\begin{prop}(\cite{lam2021cohomologyclustervarietiesii}, Proposition 3.9)
    For an acyclic quiver $Q$ with $n$ vertices the point count over $\F_q$ of the associated cluster algebra $\mathcal{A}$ is given by
    \begin{equation}\label{eqn:point count}
        \#\mathcal{A}(\F_q) = \sum\limits_{i \geq 0} a_i(Q)q^i(q-1)^{n-2i}
    \end{equation}
where $a_i(Q)$ is the number of independent sets in the underlying graph of the quiver.
\end{prop}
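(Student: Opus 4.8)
The plan is to prove the formula by reducing both sides to a common deletion-type recursion on the vertex set of $Q$. Writing $I(Q;t)=\sum_{i\ge 0}a_i(Q)t^i$ for the independence polynomial of the underlying graph of $Q$, one observes that the claimed right-hand side of \eqref{eqn:point count} is exactly
\begin{equation*}
P(Q):=\sum_{i\ge 0}a_i(Q)q^i(q-1)^{n-2i}=(q-1)^n\,I\!\left(Q;\tfrac{q}{(q-1)^2}\right),
\end{equation*}
so it suffices to show that $\#\mathcal{A}(\F_q)=P(Q)$, and I would do this by showing both quantities obey the same recursion together with the same base case.

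On the combinatorial side, fix any vertex $v$ of $Q$, let $d$ be its degree in the underlying graph, and let $N[v]$ denote its closed neighborhood. Splitting independent sets according to whether they contain $v$ gives the standard identity $I(Q;t)=I(Q-v;t)+t\,I(Q-N[v];t)$. Substituting $t=q/(q-1)^2$, multiplying by $(q-1)^n$, and using that $Q-v$ has $n-1$ vertices while $Q-N[v]$ has $n-1-d$ vertices, this rearranges to
\begin{equation}\label{eqn:Precursion}
P(Q)=(q-1)\,P(Q-v)+q(q-1)^{d-1}\,P(Q-N[v]).
\end{equation}
Since any induced subquiver of an acyclic quiver is again acyclic, both $Q-v$ and $Q-N[v]$ remain acyclic, so an induction on $n$ applies, anchored by $P(\emptyset)=1$ (and, as a sanity check, the single-vertex value $P=(q-1)+q/(q-1)$ is forced by \eqref{eqn:Precursion} with $d=0$).

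The geometric side is where the real work lies. I would choose $v$ to be a source or sink, which exists because $Q$ is acyclic, so that $Q-v$ and $Q-N[v]$ appear as induced subquivers with controlled cluster structure, and then stratify $\mathcal{A}$ by the vanishing behavior of the cluster variable attached to $v$ in a fixed acyclic seed. The expectation is that the open stratum, where this coordinate is a unit, fibers over $\mathcal{A}_{Q-v}$ with a $\mathbb{G}_m$ factor and thus contributes $(q-1)\,\#\mathcal{A}_{Q-v}(\F_q)$, while the complementary stratum, governed by the exchange relation at $v$, fibers over $\mathcal{A}_{Q-N[v]}$ with one $\mathbb{A}^1$ factor and $d-1$ torus factors coming from the neighbors of $v$, contributing $q(q-1)^{d-1}\,\#\mathcal{A}_{Q-N[v]}(\F_q)$. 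Together these reproduce \eqref{eqn:Precursion}, and the base case $\#\mathcal{A}(\F_q)=1$ for the empty quiver closes the induction.

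The main obstacle is precisely this geometric stratification: identifying, directly from the definition of the acyclic cluster variety $\mathcal{A}$, the two strata and verifying that their point counts match the coefficients in \eqref{eqn:Precursion}. This demands understanding how removing a source or sink interacts with the gluing of seed tori and with the exchange relation, and in particular accounting for the affine-line factor that yields the coefficient $q$ rather than $q-1$. Once that decomposition is in hand, the remaining bookkeeping through \eqref{eqn:Precursion} is entirely routine.
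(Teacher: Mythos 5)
First, a point of orientation: the paper does not prove this proposition at all --- it is quoted verbatim from Lam and Speyer, so there is no internal proof to measure you against; the relevant comparison is with the proof in the cited source. Your combinatorial half is correct and is in fact how that proof is organized: the deletion identity $I(Q;t)=I(Q-v;t)+t\,I(Q-N[v];t)$, the substitution $t=q/(q-1)^2$, and the resulting recursion $P(Q)=(q-1)P(Q-v)+q(q-1)^{d-1}P(Q-N[v])$ are all fine, and an induction on $n$ would close the argument once the same recursion is known for $\#\mathcal{A}(\F_q)$.

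The gap is that the geometric half --- the only part with actual content --- is never established: you write that ``the expectation is'' that $\{x_v\neq 0\}$ contributes $(q-1)\,\#\mathcal{A}_{Q-v}(\F_q)$ and that $\{x_v=0\}$ contributes $q(q-1)^{d-1}\,\#\mathcal{A}_{Q-N[v]}(\F_q)$, and then you acknowledge that producing this decomposition is ``the main obstacle.'' That obstacle \emph{is} the proposition. To carry it out you need, at minimum: the identification of $\mathcal{A}$ with the upper cluster algebra for acyclic seeds (Berenstein--Fomin--Zelevinsky), so that points of $\operatorname{Spec}\mathcal{A}$ can be analyzed via the initial cluster together with the single mutation at the source $v$; an argument that on the locus $x_v=0$ the exchange relation $x_vx_v'=\prod_{u\in N(v)}x_u+1$ forces the neighbor monomial to equal $-1$, hence the neighbors to be units, and that the residual scheme is $\mathbb{A}^1\times\mathbb{G}_m^{d-1}\times\mathcal{A}_{Q-N[v]}$ rather than merely surjecting onto it; and attention to the precise hypotheses on $\mathcal{A}$ (frozen variables, full rank). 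That last point is not pedantry: for an isolated vertex ($d=0$) your own recursion outputs $P=(q-1)+q(q-1)^{-1}$, which is not an integer at $q=3$, so it cannot be the $\F_q$-point count of any scheme as literally described --- a sign that the statement carries hypotheses your stratification does not engage with, and that the degenerate case of your ``choose a source or sink'' step breaks down. As written, the proposal is a correct plan that reproduces the known strategy but defers its decisive step, so it does not constitute a proof.
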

Galashin and Lam established a relation between this point count polynomial, which they denoted $R(Q;q)$, and the HOMFLY polynomial. In particular, given a link $L$, they defined $P^{\textrm{top}}(L;q)$ to be obtained from the top $a$-degree term of $P(L)$ via the substitutions $a=q^{-1/2}$ and $z=q^{1/2}-q^{-1/2}$. They then proved the following result for a class of plabic graphs called leaf recurrent plabic graphs, which includes reduced plabic graphs and plabic fences. 

\begin{thm}(\cite{GLplabiclinks}, Theorem 2.9) Let $G$ be a simple, leaf recurrent plabic graph with $c(G)$ connected components. Then we have
$$R(Q_G;q) = (q-1)^{c(G)-1}P^{\textrm{top}}(L_G^{\textrm{plab}};q).$$
    
\end{thm}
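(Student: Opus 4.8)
The plan is to verify this identity in the forest setting of the present paper by feeding the closed formula of Theorem~\ref{thm:homflyformula} into the point count formula (\ref{eqn:point count}), and then bootstrapping from connected plabic graphs to the disconnected case through the behaviour of the HOMFLY polynomial under split unions. For the full statement, valid for all leaf recurrent graphs, one would instead induct directly on the leaf recurrence using the skein relation of Lemma~\ref{skeinrelation}; I return to this in the final paragraph.

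First I would dispatch the case $c(G)=1$, i.e.\ $G$ connected. Here Theorem~\ref{thm:welldef} gives $P(L_G^\plab)=f(Q_G)$ and Theorem~\ref{thm:homflyformula} supplies the closed form
\[
f(Q_G)=\frac{1}{a^n}\sum_{i,j\geq 0} c_{i,j}(Q_G)\,z^{n-2i}(1-a^{-2})^j .
\]
The key point is that the top $a$-degree part is isolated by replacing each factor $(1-a^{-2})^j$ by its constant term $1$, since the remaining terms of $(1-a^{-2})^j=1-ja^{-2}+\cdots$ only decrease the power of $a$. Thus the top $a$-degree term is $a^{-n}\sum_i\big(\sum_j c_{i,j}(Q_G)\big)z^{n-2i}$, and because summing $c_{i,j}(Q_G)$ over $j$ counts all independent sets of size $i$ we have $\sum_j c_{i,j}(Q_G)=a_i(Q_G)$. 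Substituting $a=q^{-1/2}$ and $z=q^{1/2}-q^{-1/2}=q^{-1/2}(q-1)$ converts each summand via $a^{-n}z^{n-2i}\mapsto q^{n/2}\,q^{-(n-2i)/2}(q-1)^{n-2i}=q^{i}(q-1)^{n-2i}$, so that $P^{\textrm{top}}(L_G^\plab;q)=\sum_i a_i(Q_G)q^{i}(q-1)^{n-2i}=R(Q_G;q)$, the desired equality when $c(G)=1$.

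For $c(G)\geq 2$, write $G=G_1\sqcup\cdots\sqcup G_c$, so that $L_G^\plab$ is the split union of the $L_{G_\ell}^\plab$ and $P(L_G^\plab)=\big(\tfrac{a-a^{-1}}{z}\big)^{c-1}\prod_\ell P(L_{G_\ell}^\plab)$ by the standard split-union relation for the HOMFLY polynomial. The top $a$-degree term of each factor $\tfrac{a-a^{-1}}{z}$ is $\tfrac{a}{z}$, which the substitution sends to $\tfrac{q^{-1/2}}{q^{-1/2}(q-1)}=\tfrac{1}{q-1}$; moreover extraction of the top $a$-degree term is multiplicative on products, since the top $a$-degree coefficients are nonzero Laurent polynomials in $z$ and hence cannot cancel. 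Therefore $P^{\textrm{top}}(L_G^\plab;q)=(q-1)^{-(c-1)}\prod_\ell P^{\textrm{top}}(L_{G_\ell}^\plab;q)$. Each $G_\ell$ is connected, so the previous step gives $P^{\textrm{top}}(L_{G_\ell}^\plab;q)=R(Q_{G_\ell};q)$, and since (\ref{eqn:point count}) is multiplicative over disjoint unions of quivers (immediate from $a_i(Q\sqcup Q')=\sum_{i'+i''=i}a_{i'}(Q)a_{i''}(Q')$) one has $\prod_\ell R(Q_{G_\ell};q)=R(Q_G;q)$. Rearranging yields $R(Q_G;q)=(q-1)^{c-1}P^{\textrm{top}}(L_G^\plab;q)$.

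The main obstacle is that this argument, as written, covers only forest quivers, whereas the theorem is stated for arbitrary leaf recurrent plabic graphs. To remove that restriction one cannot invoke Theorem~\ref{thm:homflyformula}; instead one must carry out the induction intrinsic to leaf recurrence, applying the skein relation (\ref{blfskein}) at a boundary leaf face and tracking only the top $a$-degree term under the substitution, checking that the coefficients of $\tfrac{z}{a}P(L_{G'}^\plab)$ and $\tfrac{1}{a^2}P(L_{G''}^\plab)$ combine correctly at top $a$-degree. The second, more routine, difficulty is the bookkeeping of the split-union factor: one must confirm that ``take the top $a$-degree term, then substitute'' commutes with multiplication, which is exactly the no-cancellation observation used above.
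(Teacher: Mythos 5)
This statement is quoted from Galashin--Lam (\cite{GLplabiclinks}, Theorem 2.9); the paper does not prove it, but only checks consistency with it by computing $P^{\textrm{top}}(L_G^{\plab};q)$ from the closed formula of Theorem \ref{thm:homflyformula} in the case where $G$ is connected and $Q_G$ is a forest. Your first paragraph reproduces that check essentially verbatim: isolate the top $a$-degree by taking the constant term of $(1-a^{-2})^j$, use $\sum_j c_{i,j}(Q)=a_i(Q)$, and substitute $a=q^{-1/2}$, $z=q^{1/2}-q^{-1/2}$ to land on $\sum_i a_i(Q)q^i(q-1)^{n-2i}=R(Q_G;q)$. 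Your second paragraph, treating $c(G)\geq 2$ via the split-union factor $\left(\tfrac{a-a^{-1}}{z}\right)^{c-1}$, the multiplicativity of top-$a$-degree extraction (valid because the leading coefficients live in the integral domain $\Z[z^{\pm 1}]$), and the multiplicativity of $R$ over disjoint unions of quivers, is correct and goes slightly beyond what the paper records; it is a clean explanation of where the factor $(q-1)^{c(G)-1}$ comes from. You are also right about the scope: for a general leaf recurrent plabic graph $Q_G$ need not be a forest, so Theorems \ref{thm:welldef} and \ref{thm:homflyformula} are unavailable and one must run the leaf-recurrence induction of \cite{GLplabiclinks} directly. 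What you have is therefore a complete verification only in the forest setting, which is exactly (and all) that the paper itself claims to extract from its closed formula.
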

The HOMFLY polynomial formula in (\ref{eqn:homflyformula}) recovers the formula of Lam and Speyer in the case where $Q_G$ is a forest and $G$ is a simple, leaf recurrent, connected plabic graph. In this case, the top $a$-degree of $P(L_G^{\textrm{plab}})$ is given by 
\begin{align*}\frac{1}{a^n}\left(\sum\limits_{i,j \geq 0} c_{i,j}(Q) z^{n-2i}\right) &= \frac{1}{a^n}\left(\sum\limits_{i \geq 0}\left( \sum\limits_{j \geq 0}c_{i,j}(Q)\right) z^{n-2i}\right).
\end{align*}
Under the substitutions $a=q^{-1/2}$ and $z=q^{1/2}-q^{-1/2}$, this becomes
\begin{align*}
   q^{n/2}\left(\sum\limits_{i \geq 0}\left( \sum\limits_{j \geq 0}c_{i,j}(Q)\right) (q^{1/2}-q^{-1/2})^{n-2i}\right)
   &=  q^{n/2}\left(\sum\limits_{i \geq 0}\left( \sum\limits_{j \geq 0}c_{i,j}(Q)\right) (q^{-1/2})^{n-2i}(q-1)^{n-2i}\right)\\
   &= q^{n/2}\left(\sum\limits_{i \geq 0}\left( \sum\limits_{j \geq 0}c_{i,j}(Q)\right) (q^{-n/2})q^{i}(q-1)^{n-2i}\right)\\
   &=\sum\limits_{i \geq 0}\left( \sum\limits_{j \geq 0}c_{i,j}(Q)\right) q^{i}(q-1)^{n-2i}\\
   &=\sum\limits_{i \geq 0}a_i(Q) q^{i}(q-1)^{n-2i}.
\end{align*}
\subsection{The Alexander Polynomial}
\begin{cor}\label{cor:alexander}
    Given a forest quiver $Q$ with $n$ vertices, the Alexander polynomial of $Q$ is given by
    \begin{equation}\label{eqn:alexanderformula}
        \Delta(Q) =t^{-n/2}\cdot\sum\limits_{i \geq 0} b_{i}(Q) t^i(t-1)^{n-2i} 
    \end{equation}
    where $b_i(Q)$ is the number of ways to choose $i$ distinct edges in $Q$ which do not share any endpoints. Alternatively, $b_i(Q)$ is the number of independent sets of size $i$ in the line graph of $Q$.
\end{cor}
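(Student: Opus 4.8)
The plan is to specialize the closed formula of Theorem~\ref{thm:homflyformula} at $a=1$ and $z = t^{1/2}-t^{-1/2}$, and then reinterpret the surviving coefficients combinatorially. First I would observe that setting $a=1$ annihilates every term with $j\geq 1$: the factor $1-a^{-2}$ vanishes at $a=1$ while the prefactor $a^{-n}$ becomes $1$, so only the $j=0$ summands contribute. Hence
\[
\Delta(Q) \;=\; f(Q)\big|_{a=1,\,z=t^{1/2}-t^{-1/2}} \;=\; \sum_{i\geq 0} c_{i,0}(Q)\,(t^{1/2}-t^{-1/2})^{n-2i}.
\]
Since $t^{1/2}-t^{-1/2}=t^{-1/2}(t-1)$, we have $(t^{1/2}-t^{-1/2})^{n-2i}=t^{-(n-2i)/2}(t-1)^{n-2i}=t^{-n/2}t^{i}(t-1)^{n-2i}$, and factoring out $t^{-n/2}$ rewrites the sum as $t^{-n/2}\sum_i c_{i,0}(Q)\,t^i(t-1)^{n-2i}$, which is exactly the shape of (\ref{eqn:alexanderformula}). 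It therefore remains only to identify $c_{i,0}(Q)$ with $b_i(Q)$.

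The heart of the argument is the combinatorial identity $c_{i,0}(Q)=b_i(Q)$, which I would establish by an explicit bijection after fixing any set of roots $R$ (the choice is immaterial by Lemma~\ref{lem: well def roots}). Recall that $c_{i,0}(Q)$ counts independent sets $I$ of size $i$ with $p(I,R)=i$. Because each non-root vertex has exactly one parent and each root has none, the number of distinct parents of $I$ can equal $|I|=i$ only when every element of $I$ is a non-root and these elements have pairwise distinct parents. To such an $I=\{w_1,\dots,w_i\}$ I would associate the set of edges $\{\{w_\ell, p(w_\ell)\} : 1\leq \ell \leq i\}$, where $p(w_\ell)$ is the parent of $w_\ell$. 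The $w_\ell$ are distinct, the parents are distinct by hypothesis, and no $w_k$ can coincide with some parent $p(w_\ell)$, for otherwise $w_k$ and $w_\ell$ would be adjacent, contradicting independence of $I$. Thus these $i$ edges are pairwise non-incident, i.e.\ a matching of size $i$, so the map lands in the set counted by $b_i(Q)$.

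For the inverse I would send a matching $\{e_1,\dots,e_i\}$ to the set of child-endpoints, taking from each $e_\ell$ the endpoint farther from the root. Distinctness of the resulting $w_\ell$ and of their parents follows from the edges being pairwise non-incident, and if two children $w_k,w_\ell$ were adjacent then, say, $w_k=p(w_\ell)$ would force $e_\ell=\{w_k,w_\ell\}$ to share the endpoint $w_k$ with $e_k$, again contradicting the matching property; hence the image is an independent set with $p(\cdot,R)=i$. These two maps are mutually inverse, giving $c_{i,0}(Q)=b_i(Q)$ and completing the proof. The closing remark that $b_i(Q)$ equals the number of size-$i$ independent sets in the line graph $L(Q)$ is then immediate, since edges of $Q$ sharing no endpoint correspond precisely to non-adjacent vertices of $L(Q)$.

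I expect the bijection—specifically checking that the child-to-edge correspondence simultaneously preserves independence and the equality $p(I,R)=i$ in both directions—to be the only step requiring genuine care; the specialization and the power-of-$t$ bookkeeping are routine.
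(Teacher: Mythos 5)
Your proposal is correct and follows essentially the same route as the paper: specialize the closed HOMFLY formula at $a=1$, $z=t^{1/2}-t^{-1/2}$ so that only the $j=0$ terms survive, and then identify $c_{i,0}(Q)$ with $b_i(Q)$ via the vertex-to-parent-edge bijection. You simply spell out the verification of the bijection (distinctness of parents, non-incidence of the edges, and the inverse map) in more detail than the paper does.
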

\begin{proof}
    The Alexander polynomial is obtained from the HOMFLY polynomial via the substitution $a=1$ and $z=t^{1/2}-t^{-1/2}$. Under this substitution, all terms in (\ref{eqn:homflyformula}) with $j> 0$ vanish. Therefore, we have that
    \begin{align*}
        \Delta(Q) &= \sum\limits_{i \geq 0} c_{i, 0}(Q) (t^{1/2}-t^{-1/2})^{n-2i}\\
        &= \sum\limits_{i \geq 0} c_{i, 0}(Q) (t^{-1/2})^{n-2i}(t-1)^{n-2i}\\
        &=t^{-n/2}\cdot\sum\limits_{i \geq 0} c_{i, 0}(Q) t^i(t-1)^{n-2i}
    \end{align*}
    Now, we note that $c_{i,0}(Q)=b_{i}(Q)$ for all $i \geq 0$. In particular, consider an independent set of size $i$ with $i-0=i$ associated parent vertices. If one draws an edge between each vertex in the independent set and its parent vertex, the result is a set of $i$ edges in $Q$ which do not touch. This induces a bijection between the sets counted by $c_{i,0}(Q)$ and those counted by $b_{i}(Q)$.
    \end{proof}

    \begin{exmp}
    Let $Q$ be the Dynkin diagram $E_6$. Then there are 5 ways to pick a single edge in $Q$, $5$ ways to pick two distinct edges which do not share a vertex, and 1 way to pick three distinct edges, none of which share any vertices; see Figure \ref{fig:E6 formula example}. Therefore the Alexander polynomial is
    \begin{align*}
        \Delta(E_6) &= t^{-3} \left(1\cdot t^0(1-t)^6 +5 \cdot t^1(1-t)^4+5\cdot t^2(1-t)^2 + 1 \cdot t^3(1-t)^0 \right)\\
        &= t^{-3}\left(t^6 - t^5 + t^3 - t + 1 \right).
    \end{align*}
    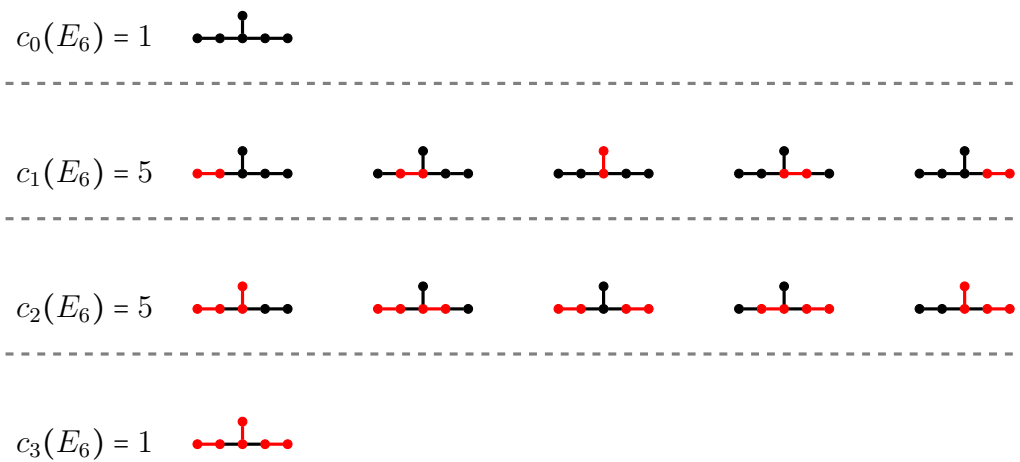
\begin{figure}
        \centering
        \begin{tikzpicture}[scale = 0.6]
        \begin{scope}
            \node () at (-2.5,0.5) {$c_0(E_6)=1$};
         \node [circle, draw=black, fill=black, scale =0.3] (1) at (0,0.5) {};
     \node [circle, draw=black, fill=black, scale =0.3] (2) at (0.5, 0.5) {};
     \node [circle, draw=black, fill=black, scale =0.3] (3) at (1, 0.5) {};
     \node [circle, draw=black, fill=black, scale =0.3] (4) at (1.5, 0.5) {};
     \node [circle, draw=black, fill=black, scale =0.3] (5) at (2, 0.5) {};
     \node [circle, draw=black, fill=black, scale =0.3] (6) at (1, 1) {};
     \draw [very thick] (1)--(2)--(3)--(4)--(5);
     \draw [very thick] (3)--(6);   
        \end{scope}
        \draw [very thick, gray, dashed] (-4.25, -0.5) -- (18.5, -0.5);
        \begin{scope}[yshift = -3cm]
        \begin{scope}
        \node () at (-2.5,0.5) {$c_1(E_6)=5$};
         \node [circle, draw=red, fill=red, scale =0.3] (1) at (0,0.5) {};
     \node [circle, draw=red, fill=red, scale =0.3] (2) at (0.5, 0.5) {};
     \node [circle, draw=black, fill=black, scale =0.3] (3) at (1, 0.5) {};
     \node [circle, draw=black, fill=black, scale =0.3] (4) at (1.5, 0.5) {};
     \node [circle, draw=black, fill=black, scale =0.3] (5) at (2, 0.5) {};
     \node [circle, draw=black, fill=black, scale =0.3] (6) at (1, 1) {};
     \draw [very thick, red] (1) -- (2);
     \draw [very thick] (2)--(3)--(4)--(5);
     \draw [very thick] (3)--(6);   
        \end{scope}

        \begin{scope}[xshift = 4cm]
         \node [circle, draw=black, fill=black, scale =0.3] (1) at (0,0.5) {};
     \node [circle, draw=red, fill=red, scale =0.3] (2) at (0.5, 0.5) {};
     \node [circle, draw=red, fill=red, scale =0.3] (3) at (1, 0.5) {};
     \node [circle, draw=black, fill=black, scale =0.3] (4) at (1.5, 0.5) {};
     \node [circle, draw=black, fill=black, scale =0.3] (5) at (2, 0.5) {};
     \node [circle, draw=black, fill=black, scale =0.3] (6) at (1, 1) {};
     \draw [very thick] (1)--(2);
     \draw [red, very thick] (2) -- (3);
     \draw [very thick] (3)--(4)--(5);
     \draw [very thick] (3)--(6);   
        \end{scope}

        \begin{scope}[xshift = 8cm]
         \node [circle, draw=black, fill=black, scale =0.3] (1) at (0,0.5) {};
     \node [circle, draw=black, fill=black, scale =0.3] (2) at (0.5, 0.5) {};
     \node [circle, draw=red, fill=red, scale =0.3] (3) at (1, 0.5) {};
     \node [circle, draw=black, fill=black, scale =0.3] (4) at (1.5, 0.5) {};
     \node [circle, draw=black, fill=black, scale =0.3] (5) at (2, 0.5) {};
     \node [circle, draw=red, fill=red, scale =0.3] (6) at (1, 1) {};
     \draw [very thick] (1)--(2)--(3)--(4)--(5);
     \draw [very thick, red] (3)--(6);   
        \end{scope}

        \begin{scope}[xshift = 12cm]
         \node [circle, draw=black, fill=black, scale =0.3] (1) at (0,0.5) {};
     \node [circle, draw=black, fill=black, scale =0.3] (2) at (0.5, 0.5) {};
     \node [circle, draw=red, fill=red, scale =0.3] (3) at (1, 0.5) {};
     \node [circle, draw=red, fill=red, scale =0.3] (4) at (1.5, 0.5) {};
     \node [circle, draw=black, fill=black, scale =0.3] (5) at (2, 0.5) {};
     \node [circle, draw=black, fill=black, scale =0.3] (6) at (1, 1) {};
     \draw [very thick] (1)--(2)--(3);
     \draw [red, very thick] (3) -- (4);
     \draw [very thick] (4)--(5);
     \draw [very thick] (3)--(6);   
        \end{scope}

        \begin{scope}[xshift = 16cm]
         \node [circle, draw=black, fill=black, scale =0.3] (1) at (0,0.5) {};
     \node [circle, draw=black, fill=black, scale =0.3] (2) at (0.5, 0.5) {};
     \node [circle, draw=black, fill=black, scale =0.3] (3) at (1, 0.5) {};
     \node [circle, draw=red, fill=red, scale =0.3] (4) at (1.5, 0.5) {};
     \node [circle, draw=red, fill=red, scale =0.3] (5) at (2, 0.5) {};
     \node [circle, draw=black, fill=black, scale =0.3] (6) at (1, 1) {};
     \draw [very thick] (1)--(2)--(3)--(4);
     \draw [red, very thick] (4) -- (5);
     \draw [very thick] (3)--(6);   
        \end{scope}
        \draw [very thick, gray, dashed] (-4.25, -0.5) -- (18.5, -0.5);
        \end{scope}

    \begin{scope}[ yshift = -6cm]
            \begin{scope}
            \node () at (-2.5,0.5) {$c_2(E_6)=5$};
         \node [circle, draw=red, fill=red, scale =0.3] (1) at (0,0.5) {};
     \node [circle, draw=red, fill=red, scale =0.3] (2) at (0.5, 0.5) {};
     \node [circle, draw=red, fill=red, scale =0.3] (3) at (1, 0.5) {};
     \node [circle, draw=black, fill=black, scale =0.3] (4) at (1.5, 0.5) {};
     \node [circle, draw=black, fill=black, scale =0.3] (5) at (2, 0.5) {};
     \node [circle, draw=red, fill=red, scale =0.3] (6) at (1, 1) {};
     \draw [red, very thick] (1)--(2);
     \draw [very thick] (2)--(3)--(4)--(5);
     \draw [red, very thick] (3)--(6);   
        \end{scope}

        \begin{scope}[xshift = 4cm]
         \node [circle, draw=red, fill=red, scale =0.3] (1) at (0,0.5) {};
     \node [circle, draw=red, fill=red, scale =0.3] (2) at (0.5, 0.5) {};
     \node [circle, draw=red, fill=red, scale =0.3] (3) at (1, 0.5) {};
     \node [circle,draw=red, fill=red, scale =0.3] (4) at (1.5, 0.5) {};
     \node [circle, draw=black, fill=black, scale =0.3] (5) at (2, 0.5) {};
     \node [circle, draw=black, fill=black, scale =0.3] (6) at (1, 1) {};
     \draw [red, very thick] (1)--(2);
     \draw [very thick] (2)--(3);
     \draw [red, very thick] (3)--(4);
     \draw [very thick] (4)--(5);
     \draw [very thick] (3)--(6);   
        \end{scope}

        \begin{scope}[xshift = 8cm]
         \node [circle, draw=red, fill=red, scale =0.3] (1) at (0,0.5) {};
     \node [circle, draw=red, fill=red, scale =0.3] (2) at (0.5, 0.5) {};
     \node [circle, draw=black, fill=black, scale =0.3] (3) at (1, 0.5) {};
     \node [circle, draw=red, fill=red, scale =0.3] (4) at (1.5, 0.5) {};
     \node [circle, draw=red, fill=red, scale =0.3] (5) at (2, 0.5) {};
     \node [circle, draw=black, fill=black, scale =0.3] (6) at (1, 1) {};
     \draw [red, very thick] (1) -- (2);
     \draw [very thick] (2)--(3)--(4);
     \draw [red, very thick] (4)--(5);
     \draw [very thick] (3)--(6);   
        \end{scope}

        \begin{scope}[xshift = 12cm]
         \node [circle, draw=black, fill=black, scale =0.3] (1) at (0,0.5) {};
     \node [circle, draw=red, fill=red, scale =0.3] (2) at (0.5, 0.5) {};
     \node [circle, draw=red, fill=red, scale =0.3] (3) at (1, 0.5) {};
     \node [circle, draw=red, fill=red, scale =0.3] (4) at (1.5, 0.5) {};
     \node [circle, draw=red, fill=red, scale =0.3] (5) at (2, 0.5) {};
     \node [circle, draw=black, fill=black, scale =0.3] (6) at (1, 1) {};
     \draw [very thick] (1)--(2);
     \draw [red, very thick] (2)--(3);
     \draw [very thick] (3)--(4);
     \draw [red, very thick] (4)--(5);
     \draw [very thick] (3)--(6);   
        \end{scope}

        \begin{scope}[xshift = 16cm]
         \node [circle, draw=black, fill=black, scale =0.3] (1) at (0,0.5) {};
     \node [circle, draw=black, fill=black, scale =0.3] (2) at (0.5, 0.5) {};
     \node [circle,draw=red, fill=red, scale =0.3] (3) at (1, 0.5) {};
     \node [circle, draw=red, fill=red, scale =0.3] (4) at (1.5, 0.5) {};
     \node [circle, draw=red, fill=red, scale =0.3] (5) at (2, 0.5) {};
     \node [circle, draw=red, fill=red, scale =0.3] (6) at (1, 1) {};
     \draw [very thick] (1)--(2)--(3)--(4);
     \draw [red, very thick] (4)--(5);
     \draw [red, very thick] (3)--(6);   
        \end{scope}
        \draw [very thick, gray, dashed] (-4.25, -0.5) -- (18.5, -0.5);
    \end{scope}

    \begin{scope}[ yshift = -9cm]
            \begin{scope}
            \node () at (-2.5,0.5) {$c_3(E_6)=1$};
         \node [circle, draw=red, fill=red, scale =0.3] (1) at (0,0.5) {};
     \node [circle, draw=red, fill=red, scale =0.3] (2) at (0.5, 0.5) {};
     \node [circle, draw=red, fill=red, scale =0.3] (3) at (1, 0.5) {};
     \node [circle, draw=red, fill=red, scale =0.3] (4) at (1.5, 0.5) {};
     \node [circle, draw=red, fill=red, scale =0.3] (5) at (2, 0.5) {};
     \node [circle, draw=red, fill=red, scale =0.3] (6) at (1, 1) {};
     \draw [red, very thick] (1)--(2);
     \draw [very thick] (2)--(3)--(4);
     \draw [red, very thick] (4)--(5);
     \draw [red, very thick] (3)--(6);   
        \end{scope}
    \end{scope}
 \end{tikzpicture}
        \caption{The sets counted by the coefficients in (\ref{eqn:alexanderformula}) when $Q$ is $E_6$.}
        \label{fig:E6 formula example}
    \end{figure}
\end{exmp}

\begin{exmp}
    Fix $n \geq 4$. Recall from Example \ref{ex: star HOMFLY} that $S_n$ is the star graph on $n$ vertices which has one vertex of degree $n-1$ connected to $n-1$ leaves. Any edge in $S_n$ must have the degree $n-1$ vertex as one of its endpoints, so it is impossible to pick multiple edges in $S_n$ which do not share an endpoint. It follows that 
    \begin{align*}
        \Delta(S_n) &= \frac{(-1)^n}{t^{n/2}}\left((1-t)^n + (n-1)\cdot t(1-t)^{n-2} \right)\\
        &=  \frac{(-1)^n}{t^{n/2}}\left(\left((1-t)^2+(n-1)t\right)(1-t)^{n-2} \right)\\
        &=  \frac{(-1)^n}{t^{n/2}}\left(\left(1-(n-3)t+t^2\right)(1-t)^{n-2} \right).
    \end{align*}
\end{exmp}

    We now make some observations about the related Alexander-Conway polynomial $\nabla(Q)$, following the work of Stoimenow in \cite{STOIMENOW2021105487} where he studied the Alexander-Conway polynomial for positive tree plumbing links. Given such a link $L$ with a plumbing tree $T$ whose line graph is $\Lambda$, Stoimenow made the following observation, which he credited to S. Baader:
    \begin{align}\label{stoi formula}
        \nabla(L) = \sum\limits_{i \geq 0} a_{i}(\Lambda) z^{n-2i}
    \end{align}
    where $n$ is the number of vertices in $T$. 
    Stoimenow then proves the log-concavity of $\nabla(L)(\sqrt{z})$ using the fact that it is a positive polynomial and has real roots.
    \begin{thm}
        (\cite{STOIMENOW2021105487} Theorem 4.1) If $L$ is a positive tree plumbing link, then all roots of $\nabla(L)(\sqrt{z})$ are real (and so $\nabla(L)(\sqrt{z})$ is log-concave).
        \label{thm: stoi log concave}
    \end{thm}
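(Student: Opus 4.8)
The plan is to reduce the real-rootedness claim to the classical real-rootedness of the matching polynomial of the plumbing tree $T$, and then obtain log-concavity for free from Newton's inequalities. The starting point is the observation already implicit in formula (\ref{stoi formula}): an independent set of size $i$ in the line graph $\Lambda$ of $T$ is exactly a matching of size $i$ in $T$, since a set of vertices of $\Lambda$ is independent precisely when the corresponding edges of $T$ are pairwise non-adjacent. Writing $m_i(T)$ for the number of $i$-matchings of $T$, this gives $a_i(\Lambda) = m_i(T)$, so that
\begin{equation*}
\nabla(L) = \sum_{i \geq 0} m_i(T)\, z^{n-2i}.
\end{equation*}

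The second step is to recognize this as the matching generating polynomial of $T$, which differs from the usual matching polynomial $\mu(T,z) = \sum_{i \geq 0} (-1)^i m_i(T) z^{n-2i}$ only by signs. For a forest the matching polynomial coincides with the characteristic polynomial $\det(zI - A)$ of the adjacency matrix $A$ of $T$, because the only permutations contributing to the determinant are products of transpositions along edges (a forest has no cycles to contribute higher terms). Since $A$ is a real symmetric matrix, $\mu(T,z)$ has only real roots; alternatively one may invoke the Heilmann--Lieb theorem, which gives real-rootedness of $\mu(G,z)$ for an arbitrary graph $G$.

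Third, I would transfer real-rootedness of $\mu(T,z)$ to $\nabla(L)$ through the substitution $z \mapsto \sqrt{-1}\,z$. A direct computation using $(\sqrt{-1})^{-2i} = (-1)^i$ gives $\mu(T, \sqrt{-1}\,z) = (\sqrt{-1})^{n}\,\nabla(L)(z)$, so each nonzero root $\zeta$ of $\mu(T,z)$ produces a root $w = -\sqrt{-1}\,\zeta$ of $\nabla(L)$. As every such $\zeta$ is real, every root $w$ of $\nabla(L)$ is purely imaginary. Consequently the roots of $\nabla(L)(\sqrt{z})$, which are the squares $w^2 = -\zeta^2$ of the roots of $\nabla(L)$, are all real and non-positive; this is precisely the assertion that $\nabla(L)(\sqrt{z})$ is real-rooted.

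Finally, log-concavity follows formally: $\nabla(L)(\sqrt{z})$ has non-negative coefficients (each $m_i(T) \geq 0$) and, by the above, only real roots, so Newton's inequalities apply and yield $c_i^2 \geq c_{i-1}c_{i+1}$ for its coefficient sequence. The main obstacle is concentrated entirely in the first two steps: once the passage from independent sets of $\Lambda$ to matchings of $T$ is made, real-rootedness is classical, and the only remaining care is the bookkeeping of the $z \mapsto \sqrt{-1}\,z$ and $z \mapsto \sqrt{z}$ substitutions, in particular the harmless factor of $\sqrt{z}$ appearing when $n$ is odd, which contributes only a root at the origin.
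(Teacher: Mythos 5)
The paper does not prove this statement; it is quoted verbatim as Theorem 4.1 of Stoimenow \cite{STOIMENOW2021105487}, and the surrounding text only records the high-level structure of his argument (positivity plus real-rootedness implies log-concavity via Newton's inequalities). Your proposal correctly fills in that outline along what is essentially the standard route: independent sets of the line graph $\Lambda$ are matchings of $T$, the signed matching polynomial of a forest equals the characteristic polynomial of its adjacency matrix (or one invokes Heilmann--Lieb) and is therefore real-rooted, and the substitution $z \mapsto \sqrt{-1}\,z$ followed by $z \mapsto \sqrt{z}$ converts its real roots into the real, non-positive roots of $\nabla(L)(\sqrt{z})$; the bookkeeping, including the spare factor of $\sqrt{z}$ in odd degree, is handled correctly. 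I see no gap.
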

    
    We note that the formula (\ref{stoi formula}) agrees with the formula for $\nabla(T)$ for any tree quiver $T$ given by Corollary \ref{cor:alexander}. In particular, we see using the proof of the corollary that
    \begin{align*}
        \Delta(T) &= \sum\limits_{i \geq 0} c_{i, 0}(T) (t^{1/2}-t^{-1/2})^{n-2i}\\
        &= \sum\limits_{i \geq 0} b_i(T) z^{n-2i}\\
        &=\sum\limits_{i \geq 0} a_i(\Lambda) z^{n-2i}.
    \end{align*}
    Stoimenow's result generalizes to the Alexander-Conway polynomials of forest quivers since these polynomials are also positive and, as products of the Alexander-Conway polynomials of each of the quiver's connected components, also have real roots.
    \begin{cor}
        If $Q$ is a forest quiver, the Alexander-Conway polynomial $\nabla(Q)(\sqrt{z})$ is log-concave.
    \end{cor}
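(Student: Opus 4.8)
The plan is to reduce everything to Stoimenow's real-rootedness result (Theorem \ref{thm: stoi log concave}) by combining the multiplicativity of $\nabla$ over connected components with the elementary fact that a product of real-rooted polynomials is again real-rooted.

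First I would record the explicit form of the invariant. By Corollary \ref{cor:alexander}, specialized one step further to $a=1$ (equivalently, by setting $a=1$ in the closed formula \eqref{eqn:homflyformula}, which annihilates every term with $j>0$), the Alexander--Conway polynomial of a forest $Q$ with $n$ vertices is
\[
  \nabla(Q) = \sum_{i\ge 0} a_i(\Lambda)\, z^{\,n-2i},
\]
where $\Lambda$ is the line graph of $Q$ and each $a_i(\Lambda)\ge 0$ is a count of independent sets. In particular $\nabla(Q)(\sqrt z)$ is a polynomial (up to the harmless overall factor $z^{1/2}$ when $n$ is odd) with nonnegative coefficients, so \emph{log-concave} here means precisely that the coefficient sequence $(a_i(\Lambda))_i$ has no internal zeros and satisfies $a_i(\Lambda)^2 \ge a_{i-1}(\Lambda)\,a_{i+1}(\Lambda)$.

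Next, writing $Q = Q_1 \sqcup \dots \sqcup Q_k$ with each $Q_\ell$ a tree, I would use that $f$ is multiplicative over connected components (the last clause of Definition \ref{def: HOMFLY of forest}); setting $a=1$ preserves this, so $\nabla(Q) = \prod_{\ell=1}^k \nabla(Q_\ell)$, and since the substitution $z \mapsto \sqrt z$ commutes with products, $\nabla(Q)(\sqrt z) = \prod_{\ell=1}^k \nabla(Q_\ell)(\sqrt z)$. For each tree component $Q_\ell$, the displayed formula shows $\nabla(Q_\ell)$ coincides with Baader's formula \eqref{stoi formula} for the Conway polynomial of the positive tree plumbing link whose plumbing tree is the underlying tree of $Q_\ell$; hence Theorem \ref{thm: stoi log concave} applies and $\nabla(Q_\ell)(\sqrt z)$ has only real roots.

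Finally I would invoke two standard facts: a product of real-rooted real polynomials is real-rooted, so $\nabla(Q)(\sqrt z)$ has only real roots; and a polynomial with nonnegative coefficients and only real roots has a log-concave coefficient sequence with no internal zeros (Newton's inequalities). Together these give the log-concavity of $\nabla(Q)(\sqrt z)$. The only real content beyond bookkeeping is the passage from a single tree to a forest, namely the multiplicativity of $\nabla$ and the preservation of real-rootedness under products; I expect the one place needing genuine care to be confirming that $\nabla(Q_\ell)$ really matches the Conway polynomial to which Stoimenow's theorem is stated (and handling the parity and $z^{1/2}$ bookkeeping that makes ``log-concave'' well posed), the remaining steps being immediate.
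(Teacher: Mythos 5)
Your proposal is correct and follows essentially the same route as the paper: identify $\nabla(T)=\sum_i a_i(\Lambda)z^{n-2i}$ with Baader's formula for positive tree plumbing links so that Stoimenow's Theorem \ref{thm: stoi log concave} gives real-rootedness for each tree component, then use multiplicativity of $\nabla$ over connected components together with positivity of the coefficients to conclude real-rootedness and hence log-concavity for a forest. Your write-up is somewhat more explicit about Newton's inequalities and the $z^{1/2}$ parity bookkeeping, but the argument is the paper's.
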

\printbibliography
\end{document}